\newtheorem{letterthm}{Theorem}
\newtheorem{lettercor}[letterthm]{Corollary}
\newtheorem{theo}{Theorem}[section]
\newtheorem{cor}[theo]{Corollary}
\newtheorem{lem}[theo]{Lemma}
\newtheorem{prop}[theo]{Proposition}
\theoremstyle{definition}
\newtheorem{example}[theo]{Example}
\newtheorem{df}[theo]{Definition}
\newtheorem*{claim}{Claim}
\newtheorem*{definition}{Definition}
\newcommand{\R}{\mathbb{R}}
\newcommand{\C}{\mathbb{C}}
\newcommand{\Z}{\mathbb{Z}}
\newcommand{\F}{\mathbb{F}}
\newcommand{\N}{\mathbb{N}}
\newcommand{\Id}{\mathord{\text{\rm Id}}}
\newcommand{\Ad}{\operatorname{Ad}}
\newcommand{\Tr}{\operatorname{Tr}}
\newcommand{\alg}{\text{\rm alg}}
\newcommand{\cb}{\text{\rm cb}}
\newcommand{\SO}{\operatorname{SO}}
\newcommand{\SU}{\operatorname{SU}}
\newcommand{\HNN}{\operatorname{HNN}}
\newcommand{\Ind}{\operatorname{Ind}}
\newcommand{\core}{\mathord{\text{\rm c}}}
\newcommand{\rL}{\mathord{\text{\rm L}}}
\newcommand{\red}{\text{\rm red}}
\newcommand{\cN}{\mathcal{N}}
\newcommand{\cU}{\mathcal{U}}
\newcommand{\recht}{\rightarrow}
\newcommand{\QHreg}{\mathcal{QH}_{\text{\rm reg}}}
\newcommand{\B}{\operatorname{B}}
\newcommand{\M}{\operatorname{M}}
\newcommand{\cF}{\mathcal{F}}
\newcommand{\cS}{\mathcal{S}}
\newcommand{\cG}{\mathcal{G}}
\newcommand{\cH}{\mathcal{H}}
\newcommand{\cV}{\mathcal{V}}
\newcommand{\cW}{\mathcal{W}}
\newcommand{\cR}{\mathcal{R}}
\newcommand{\cO}{\mathcal{O}}
\newcommand{\cK}{\mathcal{K}}
\newcommand{\cZ}{\mathcal{Z}}
\newcommand{\cE}{\mathcal{E}}
\newcommand{\cT}{\mathcal{T}}
\newcommand{\cI}{\mathcal{I}}
\newcommand{\cM}{\mathcal{M}}
\newcommand{\cB}{\mathcal{B}}
\newcommand{\actson}{\curvearrowright}
\newcommand{\vphi}{\varphi}
\newcommand{\diag}{\operatorname{diag}}
\newcommand{\eps}{\varepsilon}
\newcommand{\vphih}{\widehat{\varphi}}
\newcommand{\si}{\sigma}
\newcommand{\lspan}{\operatorname{span}}
\newcommand{\ot}{\otimes}
\newcommand{\ovt}{\mathbin{\overline{\otimes}}}
\newcommand{\cHtil}{\widetilde{\mathcal{H}}}
\newcommand{\minim}{\text{\rm min}}
\newcommand{\Om}{\Omega}
\newcommand{\rd}{\mathord{\text{\rm d}}}
\newcommand{\cA}{\mathcal{A}}
\newcommand{\cP}{\mathcal{P}}
\newcommand{\cJ}{\mathcal{J}}
\newcommand{\dpr}{^{\prime\prime}}
\newcommand{\Gammah}{\widehat{\Gamma}}
\newcommand{\cC}{\mathcal{C}}
\newcommand{\om}{\omega}
\newcommand{\Ytil}{\widetilde{Y}}
\newcommand{\Ker}{\operatorname{Ker}}
\newcommand{\one}{\mathbf{1}}
\newcommand{\tI}{$\text{\rm I}$}
\newcommand{\tIinfty}{$\text{\rm I}_\infty$}
\newcommand{\tII}{$\text{\rm II}$}
\newcommand{\tIIone}{$\text{\rm II}_1$}
\newcommand{\tIII}{$\text{\rm III}$}
\newcommand{\tIIinfty}{$\text{\rm II}_\infty$}
\begin{document}

\title[Type III factors with unique Cartan decomposition]{Type III factors with unique Cartan decomposition}

\author[Cyril Houdayer]{Cyril Houdayer*}
\address{CNRS-ENS Lyon \\
UMPA UMR 5669 \\
69364 Lyon cedex 7 \\
France}
\email{cyril.houdayer@ens-lyon.fr}
\thanks{*Research partially supported by ANR grant AGORA NT09-461407.}

\author[Stefaan Vaes]{Stefaan Vaes**}
\address{KU Leuven \\ Department of Mathematics \\ Celestijnenlaan 200B \\ B-3001 Leuven \\ Belgium}
\email{stefaan.vaes@wis.kuleuven.be}
\thanks{**Research partially supported by ERC Starting Grant VNALG-200749, Research Programme G.0639.11 of the Research Foundation - Flanders (FWO) and K.U.Leuven BOF research grant OT/08/032.}

\subjclass[2010]{46L10; 46L54;  37A40}
\keywords{Cartan subalgebras; Nonsingular equivalence relations; Noncommutative flow of weights; Deformation/rigidity theory}

\begin{abstract}
We prove that for any free ergodic nonsingular nonamenable action $\Gamma \actson (X,\mu)$ of all $\Gamma$ in a large class of groups including all hyperbolic groups, the associated group measure space factor $\rL^\infty(X) \rtimes \Gamma$ has $\rL^\infty(X)$ as its unique Cartan subalgebra, up to unitary conjugacy. This generalizes the probability measure preserving case that was established in \cite{PV12}. We also prove primeness and indecomposability results for such crossed products, for the corresponding orbit equivalence relations and for arbitrary amalgamated free products $M_1 *_B M_2$ over a subalgebra $B$ of type \tI.
\end{abstract}

\maketitle

\section{Introduction and main results}

A \emph{Cartan subalgebra} in a (separable) factor $M$ is a maximal abelian $*$-subalgebra $A \subset M$ such that there exists a normal faithful conditional expectation $E : M \recht A$ and such that the normalizer $\cN_M(A) = \{u \in \cU(M) \mid u A u^* = A\}$ generates $M$.

The presence of a Cartan subalgebra in a factor $M$ amounts to decomposing $M = \rL_\Om(\cR)$ as the von Neumann algebra associated with a countable ergodic nonsingular equivalence relation $\cR$ on a standard measure space and a scalar $2$-cocycle $\Om$ on $\cR$ (see \cite{feldman-moore}). In particular, $\rL^\infty(X)$ is a Cartan subalgebra in the group measure space construction $\rL^\infty(X) \rtimes \Gamma$ whenever $\Gamma \actson (X,\mu)$ is a free ergodic nonsingular action. The classification problem of group measure space factors therefore splits into two different questions: uniqueness of the Cartan subalgebra and classification of orbit equivalence relations.
We focus on the first of the above questions and prove the uniqueness of Cartan subalgebras in a family of type \tIII\ factors.

Since \cite{connes-feldman-weiss,CJ81}, it is known that two Cartan subalgebras in an amenable factor are always conjugate by an automorphism, but that this can fail in nonamenable factors. The first uniqueness results in the nonamenable case were obtained in \cite{Po01}, where it is shown that \emph{rigid} Cartan subalgebras of an arbitrary type \tIIone\ crossed product $\rL^\infty(X) \rtimes \F_n$ with the free group on $2 \leq n \leq \infty$ generators, must be unitarily conjugate to $\rL^\infty(X)$.
The paper \cite{Po01} moreover introduced a whole arsenal of new methods to study the structure of \tIIone\ factors and was the starting point of Popa's deformation/rigidity theory (see \cite{Po06c,Va10a} for surveys).

The results in \cite{Po01} led to the conjecture that all crossed products $\rL^\infty(X) \rtimes \F_n$ by free ergodic probability measure preserving (pmp) actions of the free group could have a unique Cartan subalgebra up to unitary conjugacy. For \emph{profinite} actions $\F_n \actson (X,\mu)$, it was shown in \cite{OP07} that this is indeed the case, providing in particular the first \tIIone\ factors with a unique Cartan subalgebra up to unitary conjugacy. The key points of \cite{OP07} were the \emph{weak compactness} for arbitrary Cartan subalgebras of \tIIone\ factors with the complete metric approximation property, and the \emph{malleable deformation} of \cite{Po06b} for arbitrary crossed products by the free group.

In the period 2008-2011, the uniqueness of the Cartan subalgebra in a crossed product $\rL^\infty(X) \rtimes \Gamma$ by a \emph{profinite} free ergodic pmp action $\Gamma \actson (X,\mu)$ was shown for more and more groups $\Gamma$, by using weaker and weaker forms of deformation for $\Gamma$. The article \cite{OP08} covered groups $\Gamma$ with the complete metric approximation property and a proper $1$-cocycle into a nonamenable representation, by using the deformation of \cite{Pe06} associated with a densely defined derivation. In \cite{Oz10}, it was shown that in order to prove weak compactness of Cartan subalgebras, the complete metric approximation property may be replaced by weak amenability in the sense of \cite{cowling-haagerup}. Then, in \cite{CS11}, a weak type of malleable deformation could be constructed from a map $c : \Gamma \recht K$ that only coarsely satisfies the $1$-cocycle relation with respect to a unitary representation $\pi : \Gamma \recht \cU(K)$, in the sense that $\sup_{k \in \Gamma} \|c(gkh) - \pi(g) c(k)\| < \infty$ for all $g,h \in \Gamma$. Following \cite{CS11}, $\Gamma$ is said to belong to the class $\QHreg$ if $c$ can be chosen proper and into a representation $\pi$ that is weakly contained in the regular representation. So more precisely, \cite{CS11} showed that $\rL^\infty(X) \rtimes \Gamma$ has a unique Cartan subalgebra up to unitary conjugacy for all profinite free ergodic pmp actions of all nonamenable, weakly amenable groups $\Gamma$ in the class $\QHreg$. Because of \cite{Oz07}, this covers all nonelementary hyperbolic groups. This result was then extended in \cite{CSU11} to cover as well direct products of nonamenable, weakly amenable group $\Gamma$ in the class $\QHreg$.

In \cite{PV11}, the above conjecture was entirely solved and it was proven that for \emph{arbitrary} free ergodic pmp actions of the free group $\F_n$, the \tIIone\ factor $\rL^\infty(X) \rtimes \F_n$ has a unique Cartan subalgebra up to unitary conjugacy. In the terminology of \cite{PV11}, this means that the free group $\F_n$, $2 \leq n \leq \infty$, is $\cC$-rigid. More precisely, $\cC$-rigidity was shown in \cite{PV11} for all weakly amenable groups that admit a proper $1$-cocycle into a nonamenable representation. The key point of \cite{PV11} was to prove that Cartan subalgebras of \emph{arbitrary} crossed products $\rL^\infty(X) \rtimes \Gamma$ by weakly amenable groups $\Gamma$ satisfy a \emph{relative} (w.r.t.\ $\rL^\infty(X)$) \emph{weak compactness property}.
In the followup paper \cite{PV12}, also nonamenable weakly amenable groups in the class $\QHreg$, and their direct products, were shown to be $\cC$-rigid.

All the results stated so far focused on probability measure preserving actions. The following is our main result, proving the uniqueness of the Cartan subalgebra for arbitrary \emph{nonsingular} actions of weakly amenable groups in the class $\QHreg$. As such, we obtain in particular the first type \tIII\ factors with a unique Cartan subalgebra. Note that the class of weakly amenable groups in $\QHreg$ contains all hyperbolic groups, all lattices in a connected noncompact rank one simple Lie group with finite center, and all limit groups in the sense of Sela (see \cite[Lemma 2.4]{PV12}).

\begin{letterthm}\label{thmA}
Let $\Gamma_1,\ldots,\Gamma_n$ be \emph{weakly amenable} groups in the class $\QHreg$ and put $\Gamma = \Gamma_1 \times \cdots \times \Gamma_n$. Let $\Gamma \curvearrowright (X, \mu)$ be any free ergodic nonsingular action on the standard measure space $(X,\mu)$. Denote by $M = \rL^\infty(X) \rtimes \Gamma$ the group measure space factor. Then at least one of the following statements holds.
\begin{itemize}
\item There exists an $i \in \{1,\ldots,n\}$ such that $\rL^\infty(X) \rtimes \Gamma_i$ is amenable.
\item $\rL^\infty(X)$ is the unique Cartan subalgebra of $M$ up to unitary conjugacy.
\end{itemize}
\end{letterthm}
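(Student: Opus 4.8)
\emph{Sketch of the argument.} We may assume that no $\rL^\infty(X) \rtimes \Gamma_i$ is amenable, and must show that every Cartan subalgebra $B \subset M$ with faithful normal conditional expectation $E_B$ is unitarily conjugate to $A := \rL^\infty(X)$. Since $M$ need not admit a trace, \cite{PV12} does not apply directly, and the plan is to transfer the problem to the continuous core. Fix faithful normal states $\vphi$ on $A$ and $\vphi_B$ on $B$, put $\psi = \vphi \circ E_A$ for the canonical conditional expectation $E_A \colon M \recht A$, and form $\core(M) = M \rtimes_{\sigma^\psi} \R$. As $A$ is abelian and $E_A$ is $\psi$-preserving, $\sigma^\psi$ acts trivially on $A$, so $\widetilde A := A \rtimes_{\sigma^\psi} \R \cong \rL^\infty(X \times \R)$ and, as one checks, $\widetilde A$ is a Cartan subalgebra of $\core(M) = \rL^\infty(X \times \R) \rtimes \Gamma$, where $\Gamma$ acts by the Maharam extension of $\Gamma \actson (X,\mu)$ and --- this is the point of passing to the core --- \emph{preserves the dual trace}. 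Similarly the continuous core of the inclusion $B \subset M$, which is again abelian since the modular flow of $\vphi_B \circ E_B$ is trivial on $B$, is realised through the Connes cocycle $[D(\vphi_B \circ E_B) : D\psi]$ as a Cartan subalgebra $\widetilde B$ of $\core(M)$. By standard lemmas relating Cartan subalgebras to their cores, $\widetilde B \preceq_{\core(M)} \widetilde A$ forces $B \preceq_M A$, whence $B$ and $A$ are unitarily conjugate by Popa's criterion for Cartan subalgebras; so it suffices to prove $\widetilde B \preceq_{\core(M)} \widetilde A$.

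The key ingredient is a semifinite analogue of the main dichotomy of \cite{PV12}: if $\Gamma = \Gamma_1 \times \cdots \times \Gamma_n$ with each $\Gamma_i$ weakly amenable and in $\QHreg$ acts in a trace-preserving way on a (possibly infinite) abelian algebra $P$, then for every von Neumann subalgebra $Q \subset P \rtimes \Gamma$ that is amenable relative to $P$, one has either $Q \preceq_{P \rtimes \Gamma} P \rtimes \widehat\Gamma_i$ for some $i$, where $\widehat\Gamma_i := \prod_{j \neq i} \Gamma_j$, or $\cN_{P \rtimes \Gamma}(Q)''$ is amenable relative to $P$. Proving this statement --- rerunning the malleable-type deformation provided by the $\QHreg$ property together with the weak-amenability (completely bounded norm) estimates, now in the presence of a semifinite rather than finite trace --- is where I expect the main difficulty to lie.

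Granting the dichotomy, apply it inside $\core(M) = \widetilde A \rtimes \Gamma$ with $Q = \widetilde B$: being abelian, $\widetilde B$ is amenable relative to $\widetilde A$, and $\cN_{\core(M)}(\widetilde B)'' = \core(M)$ since $\widetilde B$ is Cartan. If the second alternative occurred, $\core(M)$ would be amenable (being amenable relative to the abelian $\widetilde A$), so $M$ would be amenable by Takesaki duality and amenability of $\R$, and then every $\rL^\infty(X) \rtimes \Gamma_i$, being the range of a conditional expectation on $M$, would be amenable --- contrary to assumption. Hence $\widetilde B \preceq_{\core(M)} \widetilde A \rtimes \widehat\Gamma_i$ for some $i$.

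Finally, we strip off $\widehat\Gamma_i$. Since $\widetilde B$ is Cartan in $\core(M)$ and $\widetilde A \rtimes \widehat\Gamma_i$ is regular, a standard intertwining lemma lets us conjugate $\widetilde B$ by a unitary of $\core(M)$ so that it becomes a Cartan subalgebra of $\widetilde A \rtimes \widehat\Gamma_i = \rL^\infty(X \times \R) \rtimes \widehat\Gamma_i$. The latter is of the same form as $\core(M)$, for the product $\widehat\Gamma_i$ of $n-1$ of the groups $\Gamma_j$, so we may iterate: at a stage governed by a subproduct $\Lambda = \prod_{k \in S} \Gamma_k$, applying the dichotomy inside $\widetilde A \rtimes \Lambda$ either pushes $\widetilde B$ (after conjugacy) into a strictly smaller $\widetilde A \rtimes \Lambda'$, or makes $\widetilde A \rtimes \Lambda = \core(\rL^\infty(X) \rtimes \Lambda)$ amenable, hence $\rL^\infty(X) \rtimes \Lambda$ amenable and therefore $\rL^\infty(X) \rtimes \Gamma_k$ amenable for $k \in S$ --- again impossible. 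So the process always descends, and after at most $n$ steps $\widetilde B$ is unitarily conjugate in $\core(M)$ into $\widetilde A \rtimes \{e\} = \widetilde A$, hence equal to $\widetilde A$ by maximality. Thus $\widetilde B \preceq_{\core(M)} \widetilde A$, and by the reduction in the first paragraph, $B$ is unitarily conjugate to $A$.
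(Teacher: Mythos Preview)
Your overall strategy---passing to the continuous core to obtain a trace-preserving situation, invoking a semifinite analogue of the \cite{PV12} dichotomy, and then descending back to $M$---matches the paper's. However, there is a genuine gap in your iteration step, and your proposed route to the semifinite dichotomy is harder than necessary.

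\textbf{The gap.} After obtaining $\widetilde B \preceq_{\core(M)} \widetilde A \rtimes \widehat\Gamma_i$ for \emph{some} $i$, you claim a ``standard intertwining lemma'' lets you unitarily conjugate $\widetilde B$ to a Cartan subalgebra of $\widetilde A \rtimes \widehat\Gamma_i$, so that you can iterate inside the smaller crossed product. No such lemma is standard: $\preceq$ only produces a partial-isometry intertwining of a corner, not a global conjugacy, and even locally there is no reason the image should be regular (let alone Cartan) inside the smaller algebra. Moreover $\core(M)$ need not be a factor, and your target $\widetilde A \rtimes \widehat\Gamma_i$ is not finite, so even the meaning of $\preceq$ here needs care. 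The paper avoids this entirely. Its Theorem~\ref{newkey} shows directly, in a finite-trace corner $\cM = q(\cB \rtimes \Gamma)q$, that either some $\cB \rtimes \Gamma_i$ has an amenable direct summand or $\cA p \preceq_\cM q\cB q$ for \emph{every} nonzero $p \in \cA' \cap \cM$. The product structure is handled not by iteration but by proving, for each $i$ separately via Lemma~\ref{lem.intertwine-entirely}, that the unit ball of $\cA$ is uniformly $\eps/n$-close in $\|\cdot\|_2$ to elements supported on a finite subset $\cF_i \subset \Gamma_i$; intersecting these supports over $i$ then gives finite $\Gamma$-support and hence $\cA p \preceq q\cB q$.

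\textbf{The dichotomy itself.} You propose to rerun the deformation and the weak-amenability estimates in a semifinite setting. The paper instead takes the dual coaction $\Delta : \cM \to \cM \ovt \rL\Gamma_i$, which is trace-preserving into a \emph{finite} von Neumann algebra, and applies the tracial \cite[Theorem~3.1]{PV12} there directly. A $\Delta(\cM z)$-central state on $\langle \cM \ovt \rL\Gamma_i, e_{\cM \ot 1}\rangle$ is then pulled back via an explicit $*$-homomorphism $\Psi$ to a $\cC_i$-central state on $\langle \cC_i, e_{q\cB q}\rangle$ where $\cC_i = q(\cB \rtimes \Gamma_i)q$, yielding an amenable direct summand of $\cB \rtimes \Gamma_i$. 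This reduces the semifinite case to the tracial one without reworking any of the deformation/rigidity machinery. Note also that the paper argues by contradiction (assuming the second Cartan does \emph{not} intertwine into $\rL^\infty(X)$, producing a net of unitaries with vanishing Fourier coefficients, and transporting this to the core via Lemma~\ref{lem.Fourierzero}), and works throughout in a finite-trace corner chosen so that the projection lies in $\cB$ (Lemma~\ref{lem.equiv}); your global formulation would need the same localization.
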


For probability measure preserving actions, the amenability of $\rL^\infty(X) \rtimes \Gamma_i$ is equivalent with the amenability of the group $\Gamma_i$. For general nonsingular free actions, this is no longer the case. Then, the amenability of $\rL^\infty(X) \rtimes \Gamma_i$ is equivalent with the amenability of the action $\Gamma_i \actson (X,\mu)$ in the sense of Zimmer, see \cite[Theorem 2.4]{Zi76}.

The proof of Theorem \ref{thmA} relies heavily on the results in \cite{PV12}. Given a Cartan subalgebra $A \subset M = \rL^\infty(X) \rtimes \Gamma$, we apply the modular theory of Tomita, Takesaki and Connes to finally obtain a tracial situation where the key Theorem~3.1 of \cite{PV12} can be applied. In order to descend back to the initial factor $M$, we then need a type \tIII\ version of Popa's intertwining-by-bimodules theorem in \cite{Po01} (see Theorem \ref{cartan}).

With the following explicit examples of nonsingular actions of the free group $\F_2$, we obtain factors with unique Cartan subalgebra, having any possible type and any possible flow of weights in the type \tIII$_0$ case.

\begin{lettercor}\label{corA}
Let $\Gamma = \F_2$ be the free group with two free generators $a,b \in \Gamma$. Denote by $\pi : \Gamma \recht \Z$ the group homomorphism given by $\pi(a) = 1$ and $\pi(b) = 0$. Choose a free probability measure preserving action $\Gamma \actson (X,\mu)$ such that $\Ker \pi$ acts ergodically (e.g.\ a Bernoulli action of $\Gamma$). Choose an arbitrary free ergodic nonsingular action $\Z \actson (Y,\eta)$ on a nonatomic standard measure space. Define the action $\Gamma \actson X \times Y$ given by $g \cdot (x,y) = (g \cdot x, \pi(g) \cdot y)$. Put $M = \rL^\infty(X \times Y) \rtimes \Gamma$.

Then $M$ is a factor and $\rL^\infty(X \times Y) \subset M$ is the unique Cartan subalgebra of $M$ up to unitary conjugacy. The factor $M$ has the same type and the same flow of weights as the amenable factor $\rL^\infty(Y) \rtimes \Z$.
\end{lettercor}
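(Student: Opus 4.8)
The plan is to verify the hypotheses of Theorem \ref{thmA} with $n=1$ and $\Gamma=\F_2$, to rule out the amenability alternative, and then to identify the type and flow of weights of $M$ by means of a suitable crossed-product decomposition. First I would recall that $\F_2$ is weakly amenable (in fact $\Lambda_{\cb}(\F_2)=1$ by \cite{cowling-haagerup}) and belongs to the class $\QHreg$, being a nonelementary hyperbolic group \cite{Oz07}; so the standing assumptions of Theorem \ref{thmA} are met. Next I must check that the action $\Gamma\actson X\times Y$, $g\cdot(x,y)=(g\cdot x,\pi(g)\cdot y)$, is free, ergodic and nonsingular. Nonsingularity is immediate since $\Gamma\actson(X,\mu)$ is pmp and $\Z\actson(Y,\eta)$ is nonsingular, so the product action is nonsingular for $\mu\times\eta$. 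Freeness follows because already the first coordinate action $\Gamma\actson(X,\mu)$ is free. For ergodicity I would argue as follows: $\Ker\pi\actson(X,\mu)$ is ergodic by hypothesis and acts trivially on $Y$, so any $\Gamma$-invariant function on $X\times Y$ is, for $\eta$-a.e.\ $y$, a constant in $x$; hence it descends to a function on $Y$ that is invariant under the induced $\Gamma/\Ker\pi\cong\Z$-action, which is just the ergodic action $\Z\actson(Y,\eta)$, forcing it to be constant. (One should phrase this via a Fubini argument and the fact that $\pi$ is surjective so $\Gamma$ maps onto $\Z$.) Thus $M=\rL^\infty(X\times Y)\rtimes\Gamma$ is a factor and $\rL^\infty(X\times Y)$ is a Cartan subalgebra of it.

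To invoke the second alternative of Theorem \ref{thmA} I must exclude the first, i.e.\ show $M=\rL^\infty(X\times Y)\rtimes\Gamma$ is not amenable, equivalently (by \cite[Theorem 2.4]{Zi76}) that the action $\Gamma\actson X\times Y$ is not amenable in the sense of Zimmer. Here I would use that an amenable nonsingular action of $\Gamma$ would restrict to an amenable action of the subgroup $\Ker\pi$; but $\Ker\pi\actson(X,\mu)\times(Y,\eta)$ factors through the pmp action $\Ker\pi\actson(X,\mu)$ (since $\Ker\pi$ acts trivially on $Y$), and amenability of that action would force $\Ker\pi$ to be an amenable group, which is false as $\Ker\pi$ is a normal subgroup of infinite index in $\F_2$, hence itself a nonabelian free group (indeed $\Ker\pi\cong\F_\infty$). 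Therefore $M$ is nonamenable and Theorem \ref{thmA} gives that $\rL^\infty(X\times Y)$ is the unique Cartan subalgebra of $M$ up to unitary conjugacy.

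It remains to compute the type and the flow of weights of $M$. The key observation is that, writing $N=\Ker\pi$, one has a crossed-product decomposition
\[
M=\bigl(\rL^\infty(X\times Y)\rtimes N\bigr)\rtimes\Z
=\Bigl(\rL^\infty(Y)\ovt\bigl(\rL^\infty(X)\rtimes N\bigr)\Bigr)\rtimes\Z,
\]
using that $N$ acts trivially on the $Y$-coordinate, so that $\rL^\infty(X\times Y)\rtimes N=\rL^\infty(Y)\ovt(\rL^\infty(X)\rtimes N)$. Since $N\actson(X,\mu)$ is a free ergodic pmp action, $P:=\rL^\infty(X)\rtimes N$ is a \tIIone\ factor with trace $\tau$. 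The residual $\Z$-action (generated by the image of $a$) acts on $\rL^\infty(Y)\ovt P$ as the product of the given nonsingular action $\Z\actson(Y,\eta)$ with some trace-scaling or trace-preserving action on $P$; in any case, since $P$ is a \tIIone\ factor the associated modular/flow data of $\rL^\infty(Y)\ovt P\rtimes\Z$ is governed entirely by the $\rL^\infty(Y)$-part, so $M$ is stably isomorphic — hence has the same type and, in the type \tIII$_0$ case, the same flow of weights — as $\rL^\infty(Y)\rtimes\Z$. I expect this last point to be the main technical obstacle: one must argue carefully that tensoring with a \tIIone\ factor does not change the type or flow of weights, and that the way the generator $a$ of $\Z$ is allowed to act diagonally (possibly mixing $Y$ and $P$) does not disturb this. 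A clean way is to note that $\rL^\infty(Y)\ovt P$ has the same flow of weights as $\rL^\infty(Y)$ (tensoring with a finite factor is invisible to the noncommutative flow of weights) and that the flow of weights is computed from the core $M\rtimes_{\sigma^{\varphi}}\R$, which for $M=(\rL^\infty(Y)\ovt P)\rtimes\Z$ reduces, after absorbing the finite factor $P$, to the core of $\rL^\infty(Y)\rtimes\Z$; the latter is exactly the flow of weights of the amenable factor $\rL^\infty(Y)\rtimes\Z$, which by Krieger's theorem and the classification of amenable factors realizes every type and, in type \tIII$_0$, every ergodic flow. Combining the three parts finishes the proof.
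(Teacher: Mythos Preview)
Your verification of the hypotheses of Theorem~\ref{thmA} and the exclusion of the amenability alternative are correct and essentially match the paper's argument (the paper phrases nonamenability in terms of the subalgebra $\rL^\infty(X\times Y)\rtimes\Ker\pi\cong(\rL^\infty(X)\rtimes\Ker\pi)\ovt\rL^\infty(Y)$ being nonamenable and with expectation, which is equivalent to what you do). Your explicit ergodicity argument is a welcome addition; the paper takes it for granted.

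For the type and flow of weights, however, your argument contains a genuine error. You assert that $M$ is \emph{stably isomorphic} to $\rL^\infty(Y)\rtimes\Z$, but this is impossible: $M$ is nonamenable (you just proved it), while $\rL^\infty(Y)\rtimes\Z$ is amenable, and amenability is a stable-isomorphism invariant. Likewise, the core of $M$ cannot ``reduce to'' the core of $\rL^\infty(Y)\rtimes\Z$ in any literal sense. What is true, and what you are really after, is that the \emph{centers} of the two cores, together with their dual $\R$-actions, are isomorphic. Your crossed-product decomposition $M\cong(\rL^\infty(Y)\ovt P)\rtimes_{\sigma\otimes\alpha}\Z$ can be pushed through to prove this, but it requires you to (i) identify $\core(M)$ with $(\rL^\infty(\widetilde Y)\ovt P)\rtimes_{\tilde\sigma\otimes\alpha}\Z$, where $\tilde\sigma$ is the Maharam extension of $\sigma$, using that $\alpha$ is trace-preserving on the \tIIone\ factor $P$; and then (ii) argue that, since $\tilde\sigma_n$ is properly outer on $\rL^\infty(\widetilde Y)$ for $n\neq 0$ and $P$ is a factor, the center of this crossed product is exactly $\rL^\infty(\widetilde Y)^\Z\otimes 1$. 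Neither step is hard, but neither is done in your proposal.

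The paper takes a shorter route that avoids this detour entirely: since $\Gamma\actson X$ is pmp, the Maharam extension of $\Gamma\actson X\times Y$ is simply $\Gamma\actson X\times\widetilde Y$, where $\widetilde Y=Y\times\R$ carries the Maharam extension of $\Z\actson Y$ and $\Gamma$ acts on $\widetilde Y$ through $\pi$. Then $\core(M)=\rL^\infty(X\times\widetilde Y)\rtimes\Gamma$, the action is free, so the flow of weights is $\R\actson\rL^\infty(X\times\widetilde Y)^\Gamma$. Ergodicity of $\Ker\pi\actson X$ immediately gives $\rL^\infty(X\times\widetilde Y)^\Gamma=1\otimes\rL^\infty(\widetilde Y)^\Z$, which is the flow of weights of $\rL^\infty(Y)\rtimes\Z$. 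This bypasses the crossed-product gymnastics and the need to control the possibly outer automorphism $\alpha$ of $P$.
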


Related to the existence and uniqueness problem for Cartan subalgebras are other (in)decom\-posability phenomena for von Neumann algebras. The second part of the paper focuses on tensor product decompositions of factors and \emph{primeness}. A factor $M$ is called prime if $M$ cannot be written as the tensor product of two non type \tI\ factors.

In \cite{Ge96}, as an application of Voiculescu's free entropy theory, the free group factors were shown to be prime. It was proven in \cite{Oz03} that for all groups $\Gamma$ in his \emph{class $\cS$}, and thus in particular for all hyperbolic groups, the associated group von Neumann algebra $\rL \Gamma$ is \emph{solid,} meaning that $A' \cap \rL \Gamma$ is amenable whenever $A \subset \rL \Gamma$ is a diffuse subalgebra. Note that nonamenable solid \tIIone\ factors are obviously prime. We also mention here that a group belongs to class $\cS$ if and only if it is exact and belongs to the above class $\QHreg$ of \cite{CS11} (see e.g.\ \cite[Proposition 2.7]{PV12}).

In \cite{ozawa-kurosh}, it was proven that an arbitrary crossed product $M = \rL^\infty(X) \rtimes \Gamma$ of a group $\Gamma$ in class $\cS$ and a probability measure preserving action $\Gamma \actson (X,\mu)$, is \emph{semisolid}, meaning that $P' \cap M$ is amenable whenever $P \subset M$ is a von Neumann subalgebra of type \tIIone. In particular, every orbit equivalence relation $\cR(\Gamma \actson X)$ associated with a free pmp action of a group in class $\cS$, is \emph{indecomposable}, meaning that it cannot be written as a direct product of two equivalence relations of type \tIIone. This last result had been proven before for hyperbolic groups $\Gamma$ in \cite{adams}. We extend these results to nonsingular actions. Before giving precise formulations, we introduce some terminology.

For a (possibly nonunital) von Neumann subalgebra $P \subset M$, we denote by $1_P$ the unit of $P$, which is a nonzero projection of $M$. We say that the von Neumann subalgebra $P \subset 1_P M1_P$ is \emph{with expectation} if there exists a faithful normal conditional expectation $E_P : 1_P M 1_P \to P$. We also use Popa's notion of \emph{intertwining-by-bimodules,} denoted by $\preceq$, see Section \ref{sec.intertwine}.

\begin{letterthm}\label{thmB}
Let $\Gamma$ be any \emph{exact} group in the class $\QHreg$ and $\Gamma \curvearrowright (X, \mu)$ any nonsingular action on a standard measure space. Denote by $M = \rL^\infty(X) \rtimes \Gamma$ the group measure space von Neumann algebra. Let $e \in M$ be a projection and $P \subset e M e$ any von Neumann subalgebra with expectation. Then at least one of the following holds:
\begin{itemize}
\item There exists a nonzero projection $p \in P$ such that $pPp \preceq_M \rL^\infty(X)$.
\item The relative commutant $P' \cap e M e$ is amenable.
\end{itemize}
In particular, any nonamenable subfactor $Q \subset e M e$ with expectation is prime.
\end{letterthm}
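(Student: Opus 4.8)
The plan is to reduce the dichotomy to the tracial case by passing to the continuous core of $M$, apply there the dichotomy theorem of \cite{PV12} for exact groups in the class $\QHreg$, and transport the outcome back to $M$; the primeness statement is then a short formal consequence.

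\emph{Reduction to the core.} We may assume $M$ has separable predual and, after rescaling $\mu$, that $\mu$ is a probability measure. Write $A=\rL^\infty(X)$, let $E\colon M\recht A$ be the canonical conditional expectation, fix a faithful normal state $\varphi_0$ on $A$ and put $\varphi=\varphi_0\circ E$, so that $\sigma^\varphi$ fixes $A$ pointwise. The continuous core $\widetilde M:=M\rtimes_{\sigma^\varphi}\R$ is a semifinite von Neumann algebra with canonical trace $\Tr$, and the noncommutative flow-of-weights description of group measure space algebras identifies $\widetilde M\cong\rL^\infty(\widetilde X)\rtimes\Gamma$, where $\Gamma\actson(\widetilde X,\widetilde\mu)$ is the measure-preserving Maharam extension of $\Gamma\actson(X,\mu)$ and $\widetilde A:=A\rtimes_{\sigma^\varphi}\R=\rL^\infty(\widetilde X)$ is a semifinite maximal abelian subalgebra of $\widetilde M$ with faithful normal conditional expectation. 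Since $P\subset eMe$ is with expectation, Takesaki's theorem furnishes a faithful normal state on $M$ whose modular automorphism group globally preserves $e$, $P$ and $P'\cap eMe$; using the canonical isomorphism of the continuous cores attached to different faithful normal states, both $c(P)$ and $c(P'\cap eMe)$ become von Neumann subalgebras of a corner $\widetilde e\widetilde M\widetilde e$ of the core.

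\emph{Applying the tracial dichotomy.} As $\Gamma$ is exact and in $\QHreg$, the dichotomy of \cite{PV12} applies --- in its semifinite form, obtained by cutting $\widetilde M$ down with $\Tr$-finite projections --- to the trace-preserving crossed product $\widetilde M=\widetilde A\rtimes\Gamma$: for any projection $q$ and any subalgebra $N\subset q\widetilde M q$, either $N\preceq_{\widetilde M}\widetilde A$, or the von Neumann algebra generated by the normalizer of $N$ in $q\widetilde M q$ is amenable relative to $\widetilde A$; and since $\widetilde A$ is abelian, relative amenability here means ordinary amenability. Applying this to $N=c(P'\cap eMe)$, and using the stability of amenability under passage to the core, the second alternative gives that $P'\cap eMe$ is amenable, which is the second conclusion of the theorem. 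In the first alternative, $c(P'\cap eMe)\preceq_{\widetilde M}\widetilde A$; the type~\tIII\ intertwining-by-bimodules theorem (Theorem~\ref{cartan}) then yields $q(P'\cap eMe)q\preceq_M A$ for some nonzero projection $q\in P'\cap eMe$, and since $P$ commutes with this corner one moves the commutation across the intertwiner --- exploiting that $A$ is a Cartan subalgebra of $M$ --- to conclude $pPp\preceq_M A$ for some nonzero projection $p\in P$, which is the first conclusion.

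\emph{Primeness.} Let $Q\subset eMe$ be a nonamenable subfactor with expectation and suppose $Q\cong Q_1\ovt Q_2$ with neither $Q_j$ of type~\tI; since $Q$ is a factor, so are $Q_1$ and $Q_2$. Each $Q_j\subset eMe$ is again a subfactor with expectation: compose the conditional expectation $eMe\recht Q$ with the slice $\id\ovt\om$ against a faithful normal state $\om$ on the complementary tensor factor. Apply the dichotomy to $P=Q_1$. Its first alternative would give a nonzero projection $p\in Q_1$ with $pQ_1p\preceq_M A$; this is impossible, since $pQ_1p$ is an infinite-dimensional (non-type~\tI) factor while $A$ is abelian --- disintegrating such an intertwining over the spectrum of $A$ would produce a nonzero finite-dimensional normal $*$-representation of the factor $pQ_1p$. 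Hence $Q_1'\cap eMe$ is amenable, so its subalgebra $Q_1'\cap Q=Q_2$ is amenable; the symmetric argument applied to $P=Q_2$ shows $Q_1$ is amenable, whence $Q=Q_1\ovt Q_2$ is amenable, contradicting our hypothesis. Therefore $Q$ is prime.

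\emph{Main obstacle.} The crux is the reduction to the core: setting up $c(P)$ and $c(P'\cap eMe)$ correctly when $P$ is not a priori $\sigma^\varphi$-invariant, and --- above all --- the two transfer principles, namely that the intertwining alternative in the core descends to $M$ (this is the purpose of Theorem~\ref{cartan}, together with the Cartan-subalgebra ``moving commutants'' argument used to pass from $P'\cap eMe$ to $P$) and that amenability of $P'\cap eMe$ can be read off in the tracial core (a Connes--Takesaki analysis, using that continuous cores are crossed products by the amenable group $\R$). A secondary, routine point is the upgrade of the \cite{PV12} dichotomy from finite to semifinite von Neumann algebras.
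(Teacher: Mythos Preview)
Your overall instinct --- pass to the continuous core and exploit a tracial dichotomy --- matches the paper's, but the execution has a genuine gap.

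\textbf{The dichotomy you invoke does not exist in the form you state.} You apply ``the dichotomy of \cite{PV12}'' to an \emph{arbitrary} subalgebra $N=c(P'\cap eMe)$, with conclusion ``either $N\preceq\widetilde A$ or the normalizer of $N$ generates an amenable algebra.'' The \cite{PV12} dichotomy with the normalizer conclusion requires $N$ to be amenable a priori, so applying it to $c(P'\cap eMe)$ is circular. The dichotomy that does apply to arbitrary subalgebras (Ozawa's class $\cS$ result, or \cite[Theorem 3.2]{CS11}) concludes that the \emph{commutant} $N'\cap q\widetilde M q$ is amenable; applied to $N=c(P'\cap eMe)$ this gives that $c(P)$ is amenable, which is neither conclusion of the theorem.

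\textbf{Your Alternative~1 is also broken.} Theorem~\ref{cartan} concerns maximal abelian subalgebras and provides no transfer of $\preceq$ from the core back to $M$. The subsequent ``moving commutants'' step is false in general: take $P=eMe$ a nonamenable factor, so $P'\cap eMe=\C e$ and $c(P'\cap eMe)$ trivially embeds into $\widetilde A$, yet no corner $pPp$ embeds into $A=\rL^\infty(X)$. (The theorem still holds in this example because $P'\cap eMe$ is amenable --- but your implication ``Alternative~1 $\Rightarrow$ first conclusion'' fails.) Note also that $A$ need not be Cartan, since the action is not assumed free.

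\textbf{What the paper actually does.} The paper also passes to the core, but it does \emph{not} black-box a tracial result. It first assumes $pPp\npreceq_M B$ for all $p\in P$, uses Lemma~\ref{intertwining-abelian} to produce a diffuse abelian $A\subset P$ with expectation and $A\npreceq_M B$, and reduces to showing $A'\cap eMe$ is amenable (since $P'\cap eMe\subset A'\cap eMe$ with expectation). It then \emph{reproves} the Chifan--Sinclair argument directly in the semifinite core: it builds the Gaussian deformation $(V_t)$, uses the unitaries $w_n\in\cU(A)$ with vanishing Fourier coefficients (coming from $A\npreceq_M B$ and Lemma~\ref{lem.Fourierzero}) to find vectors moved by the deformation, invokes exactness of $\Gamma$ through \emph{local reflexivity} of $\core(B)\rtimes_{\red}\Gamma$ to approximate finitely many operators, and verifies Haagerup's amenability criterion \eqref{haagerup} for $p\,\core(A'\cap eMe)\,p$ for every finite-trace projection $p$. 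The point is that \cite[Theorem 3.2]{CS11} is only stated for pmp actions; there is no off-the-shelf semifinite version to cite, and by building $A$ and $(w_n)$ at the level of $M$ \emph{before} passing to the core, the paper avoids any need to transfer a $\preceq$-statement back down.

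Your primeness paragraph is fine.
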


For probability measure preserving actions, Theorem~\ref{thmB} had already been proven in \cite[Theorem 4.6]{ozawa-kurosh} and \cite[Theorem 3.2]{CS11}. An independent proof for the general case of nonsingular actions was given in \cite{isono}.

Following \cite[Definition 2.1]{adams}, a nonsingular countable equivalence relation $\cR$ on a standard measure space $(X, \mu)$ is called
\begin{itemize}
\item \emph{recurrent} if for every Borel subset $\cW \subset X$ with $\mu(\cW) > 0$, and for $\mu$-almost every $x \in \cW$, the intersection $\cW \cap \{y : (x, y) \in \cR\}$ is infinite. This is equivalent to saying that the von Neumann algebra $\rL(\cR)$ of the equivalence relation $\cR$ has no type \tI\ direct summand \cite{feldman-moore}.
\item \emph{decomposable} if $\cR$ splits as a direct product $\cS_1 \times \cS_2$ of recurrent nonsingular equivalence relations.
\end{itemize}

The following result is an equivalence relation version of Theorem \ref{thmB}.

\begin{letterthm}\label{thmC}
Let $\Gamma$ be \emph{any} group in the class $\QHreg$ and $\Gamma \curvearrowright (X, \mu)$ any free nonsingular action on a standard measure space. Let $\cV \subset X$ be a non-negligible subset. Every nonamenable recurrent subequivalence relation of $\cR(\Gamma \curvearrowright X)_{|\cV}$ is indecomposable.
\end{letterthm}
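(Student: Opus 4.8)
The plan is to deduce Theorem~\ref{thmC} from Theorem~\ref{thmB} --- or, more precisely, from the dichotomy underlying its proof, which for the application to subequivalence relations only uses $\Gamma\in\QHreg$ and not exactness. Suppose $\cS$ is a nonamenable recurrent subequivalence relation of $\cR(\Gamma\actson X)_{|\cV}$ and assume, towards a contradiction, that $\cS=\cS_1\times\cS_2$ with $\cS_1,\cS_2$ recurrent. Put $M=\rL^\infty(X)\rtimes\Gamma$, $A=\rL^\infty(X)$ and $e=1_\cV\in A$, so that $eMe=\rL\bigl(\cR(\Gamma\actson X)_{|\cV}\bigr)$ and $N:=\rL(\cS)\subseteq eMe$ carries a faithful normal conditional expectation $E\colon eMe\to N$, namely the canonical one attached to a subequivalence relation. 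By \cite{feldman-moore}, $N=N_1\ovt N_2$ with $N_i=\rL(\cS_i)$, and (as recalled in the Introduction) recurrence of $\cS_i$ means precisely that $N_i$ has no type~\tI\ direct summand. Since $\cS$ is nonamenable, so is $N$, and since a tensor product of amenable algebras is amenable, at least one $N_i$ is nonamenable; after relabelling, say $N_1$ is nonamenable.

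Next I would apply Theorem~\ref{thmB} to $P=N_2\subseteq eMe$. This subalgebra is with expectation: choose a faithful normal state $\omega$ on $N_1$ (possible as $N_1$ is $\sigma$-finite) and compose $E$ with the slice map $\omega\ovt\id\colon N_1\ovt N_2\to N_2$. Theorem~\ref{thmB} then gives two cases. If $N_2'\cap eMe$ is amenable, I reach a contradiction as follows: $N_1\subseteq N_2'\cap eMe$, and since $E$ is $N$-bimodular it maps $N_2'\cap eMe$ into $N_2'\cap N=N_1\ovt Z(N_2)$; composing the resulting conditional expectation with a further slice map $N_1\ovt Z(N_2)\to N_1$ yields a faithful normal conditional expectation of $N_2'\cap eMe$ onto $N_1$, so that $N_1$ would be amenable (injectivity passing to the range of a conditional expectation), contrary to our choice.

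So we must be in the remaining case: there is a nonzero projection $p\in N_2$ with $pN_2p\preceq_M A$. I would rule this out by the following elementary remark about intertwining into an abelian algebra (with $\preceq_M$ understood in the type~\tIII\ sense of Section~\ref{sec.intertwine}): if $Q\preceq_M A$ with $A$ abelian, then $Q$ has a nonzero abelian projection. Indeed, $Q\preceq_M A$ provides a nonzero projection $q\in Q$ and a nonzero normal $*$-homomorphism $\psi$ of $qQq$ into a corner of $A$; its kernel is a $\sigma$-weakly closed two-sided ideal of $qQq$, hence of the form $z\,qQq$ for a central projection $z$, and the complementary summand $(q-z)\,qQq$ is nonzero (as $\psi\neq 0$) and $*$-isomorphic to $\psi(qQq)$, hence abelian; thus $q-z$ is a nonzero abelian projection of $Q$. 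On the other hand, $N_2$ has no type~\tI\ summand --- equivalently, no nonzero abelian projection --- and neither does the corner $pN_2p$, since a nonzero abelian projection of $pN_2p$ would be abelian in $N_2$. Hence $pN_2p\not\preceq_M A$, contradicting the previous step. Therefore $\cS$ is indecomposable.

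The substantive input is entirely carried by Theorem~\ref{thmB} (and, inside it, by the modular-theory reduction to a tracial situation, by \cite[Theorem~3.1]{PV12}, and by the type~\tIII\ intertwining Theorem~\ref{cartan}); the deduction above is essentially bookkeeping. The two points that need care are: (i) checking that each of the inclusions $N\subseteq eMe$, $N_2\subseteq eMe$ and $N_1\subseteq N_2'\cap eMe$ is genuinely \emph{with expectation} --- which is what lets Theorem~\ref{thmB} be applied and what makes the descent of injectivity from $N_2'\cap eMe$ to $N_1$ legitimate; and (ii) if one wants the conclusion for \emph{all} $\Gamma\in\QHreg$, and not only for the exact ones covered by Theorem~\ref{thmB}, then the dichotomy applied to $N_2$ should be invoked directly from \cite[Theorem~3.1]{PV12}, after the same modular reduction as in the proof of Theorem~\ref{thmA}, rather than by quoting Theorem~\ref{thmB} as a black box.
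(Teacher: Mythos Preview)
Your reduction to Theorem~\ref{thmB} is exactly what the paper does for exact $\Gamma$, and the paper even says so explicitly: ``If $\Gamma$ would be exact, it would now follow immediately from Theorem~\ref{thmB} that $P_2 \subset P_1' \cap M$ is amenable.'' Your bookkeeping (the expectations onto $N_2$ and onto $N_1$ inside $N_2'\cap eMe$, and the observation that $pN_2p\preceq_M A$ would force a type~\tI\ direct summand) is fine. One small slip: the intertwining $Q\preceq_M A$ gives a normal $*$-homomorphism into $\M_n(A)$, not into a corner of $A$, so the resulting direct summand of $Q$ is type~\tI\ rather than abelian; this does not affect your contradiction, since $N_2$ has no type~\tI\ summand.

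The genuine gap is your point~(ii). You propose to handle nonexact $\Gamma\in\QHreg$ by invoking \cite[Theorem~3.1]{PV12} ``after the same modular reduction as in the proof of Theorem~\ref{thmA}.'' But \cite[Theorem~3.1]{PV12} requires \emph{weak amenability} of $\Gamma$, which is not assumed here (and which, incidentally, already implies exactness). The dichotomy you need is the one from the proof of Theorem~\ref{thmB}, based on the Chifan--Sinclair deformation; exactness enters that proof at exactly one point, namely to produce the approximating completely contractive maps $\vphi_i:\cE\to\core(B)\rtimes_{\red}\Gamma$ via local reflexivity of exact C$^*$-algebras. The paper's proof of Theorem~\ref{thmC} bypasses this by exploiting the equivalence-relation structure: elements $u(\psi_k)z$ of the full group can be approximated in $\|\cdot\|_{2,\Tr}$ by cutting down with projections $p_i\in\rL^\infty(\cU)$ chosen so that $u(\psi_k)p_i\in\rL^\infty(X\times\R)\rtimes_{\alg}\Gamma$, which gives the required $\vphi_i(x)=p_ixp_i$ by hand. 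This explicit construction, together with the equivalence-relation amenability criterion of Lemma~\ref{lem.crit-amen}, is the actual content of the nonexact case, and it is not supplied by any black-box citation.
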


The special case of Theorem~\ref{thmC} where $\Gamma$ is a hyperbolic group and where one only considers the entire orbit equivalence relation $\cR(\Gamma \actson X)$, rather than arbitrary recurrent subequivalence relations, was obtained in \cite[Theorem 6.1]{adams}. For exact groups in the class $\QHreg$, Theorem \ref{thmC} is an immediate corollary of Theorem \ref{thmB}. Since it is not clear whether there even exist nonexact groups in the class $\QHreg$, it might seem not so interesting to make the effort to prove Theorem \ref{thmC} in its full generality. But as we will see, the proof of Theorem \ref{thmB} uses the very deep fact that exact C$^*$-algebras are locally reflexive. The main reason to give a direct proof for Theorem \ref{thmC} precisely is to avoid the use of local reflexivity.

Another source of prime von Neumann algebras is given by \emph{(amalgamated) free products.} In \cite{ozawa-kurosh}, it was shown that the free product $M = M_1 * M_2$ of finite factors $M_i$ containing a dense exact C$^*$-subalgebra, is always prime, unless $M_1 \cong \C$ or $M_2 \cong \C$.

In \cite{IPP05}, a systematic Bass-Serre theory for amalgamated free products $M = M_1 *_B M_2$ of finite von Neumann algebras, was developed. In particular, \cite{IPP05} introduced a malleable deformation (in the sense of \cite{popa-malleable1,Po06c}) for such an amalgamated free product. A combination of this malleable deformation and the spectral gap rigidity methods of \cite{Po06a,Po06b}, then lead in \cite{CH08} to a general primeness result for amalgamated free product von Neumann algebras $M = M_1 \ast_B M_2$ over a type \tI\ subalgebra $B$.

In \cite{CH08}, the von Neumann algebras $M_i$ need no longer be tracial. More precisely, \cite[Theorem 5.2]{CH08} states that whenever an amalgamated free product $M= M_1 *_B M_2$ with $B$ of type \tI\ is a nonamenable factor and if $M_1 \neq B \neq M_2$, then $M$ must be prime. It was however overlooked in \cite{CH08} that for general inclusions $B \subset M_i$, the condition $M_1 \neq B \neq M_2$ is too weak to avoid ``trivialities'' in which a corner of $M$ equals a corner of one of the $M_i$'s. We settle this technical issue here. Moreover our stronger assumption on $B \subset M_i$ automatically implies that $M$ is a nonamenable factor.

We say that an inclusion $A \subset P$ has a \emph{trivial corner} if there exists a nonzero projection $p \in A' \cap P$ such that $Ap = pPp$. We obtain the following general result.

\begin{letterthm}\label{thmD}
For $i = 1, 2$, let $B \subset M_i$ be an inclusion of von Neumann algebras with separable predual. Assume that for all $i =1,2$, the inclusion $B \subset M_i$ is without trivial corner and with expectation. Assume that $B$ is a type \tI\ von Neumann algebra and that $M_1' \cap M_2' \cap B = \C$. Then the amalgamated free product $M_1 \ast_B M_2$ is a nonamenable prime factor.
\end{letterthm}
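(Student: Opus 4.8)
The plan is to run the malleable--deformation/spectral--gap argument for amalgamated free products of \cite{IPP05,CH08}, taking care that the hypothesis ``without trivial corner'' is exactly what prevents the argument from degenerating into the trivialities that were overlooked in \cite[Theorem~5.2]{CH08}. Write $M = M_1 \ast_B M_2$, with conditional expectations $E_i\colon M_i\to B$ and $E\colon M\to B$, and use throughout the orthogonal decomposition of $M$ into reduced words over $B$. There are three assertions to prove: $M$ is a factor, $M$ is nonamenable, and $M$ is prime.

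\emph{Factoriality and nonamenability.} An element $z\in\cZ(M)$ lies in $M_1'\cap M$ and in $M_2'\cap M$. The crucial input is that, since neither inclusion $B\subseteq M_i$ has a trivial corner, the standard word--length analysis in an amalgamated free product (as in \cite{IPP05}) gives $M_i'\cap M\subseteq M_i$ for $i=1,2$; hence $\cZ(M)\subseteq M_1\cap M_2 = B$. As $z$ commutes with both $M_1$ and $M_2$, which generate $M$, we conclude $z\in M_1'\cap M_2'\cap B = \C$, so $M$ is a factor; the same reasoning shows that in any tensor decomposition $M = N_1\ovt N_2$ the $N_i$ are automatically factors with $N_2 = N_1'\cap M$. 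For nonamenability: if $M$ were amenable, then, $B$ being of type \tI, a structure/spectral--gap argument forces one of the inclusions $B\subseteq M_i$ to be co--amenable of index two, and such an inclusion of a type \tI\ algebra necessarily has a trivial corner --- contradicting the hypothesis.

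\emph{Primeness.} Suppose toward a contradiction that $M = N_1\ovt N_2$ with $N_1$, $N_2$ non--type--\tI; by the remark above they are factors and $N_2 = N_1'\cap M$. We may assume $N_1$ is nonamenable, since otherwise both $N_i$ would be amenable, forcing $M$ amenable and contradicting the previous step. Consider the $s$--malleable deformation $(\alpha_t)_{t\in\R}$ of \cite{IPP05} associated with $M=M_1\ast_B M_2$, acting on a larger von Neumann algebra $\widetilde M\supseteq M$ obtained as an amalgamated free product of $M$ with an auxiliary algebra over $B$. As an $N_1$--bimodule, the complementary bimodule $\rL^2(\widetilde M)\ominus\rL^2(M)$ is coarse relative to $B$; since $B$ is of type \tI, being nonamenable relative to $B$ coincides with being nonamenable, so $N_1$ has spectral gap in $\widetilde M$, and Popa's argument \cite{Po06b} gives $\alpha_t\to\id$ uniformly on $N_1$. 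Because $N_2 = N_1'\cap M$ commutes with $N_1$, the usual commutation and transversality trick propagates uniform convergence of $(\alpha_t)$ to $N_2$, hence to $M = N_1\vee N_2$. By the dichotomy of \cite{IPP05,CH08}, uniform convergence of $(\alpha_t)$ on $M$ forces $M\preceq_M M_1$ or $M\preceq_M M_2$.

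\emph{The main obstacle.} It remains to rule out $M\preceq_M M_i$: this is precisely where the no--trivial--corner hypothesis on $B\subseteq M_{3-i}$ enters, and it is the point treated too loosely in \cite{CH08}. Say $M\preceq_M M_1$; unwinding the intertwining one obtains nonzero projections $p\in M$, $q\in M_1$, a partial isometry $v\in pMq$ and a normal $\ast$--homomorphism $\psi\colon pMp\to qM_1q$ with $xv = v\psi(x)$ for all $x\in pMp$. Exploiting that every element of $M_2\ominus B$ is free from $M_1$ over $B$, a combinatorial analysis of reduced words produces a nonzero projection $p'\in M_1$ with $p'Mp' = p'M_1p'$; and from such a projection one constructs a projection exhibiting a trivial corner for the inclusion $B\subseteq M_2$, contradicting the hypothesis. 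This last word--combinatorial step, together with the careful bookkeeping of trivial corners, is the technically delicate heart of the proof; once it is in place, the three assertions combine to show that $M_1\ast_B M_2$ is a nonamenable prime factor.
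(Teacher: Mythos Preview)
Your outline has the right shape but a genuine structural gap: you run the entire argument directly in $M$, whereas the $M_i$ here are arbitrary von Neumann algebras with expectation onto a type~\tI\ algebra $B$ and may well be of type~\tIII. The machinery you invoke --- the IPP word--length analysis, the malleable deformation and spectral--gap/transversality dichotomy, and Popa's intertwining $M\preceq_M M_i$ --- is developed in the tracial (or at least semifinite) setting, and you never explain how to make sense of it here. The paper's route is precisely to pass to the continuous core: one has $\core(M)=\core(M_1)\ast_{\core(B)}\core(M_2)$ with $\core(B)$ still of type~\tI, and a separate proposition shows that ``no trivial corner'' for $B\subset M_i$ implies ``no trivial corner'' for $\core(B)\subset\core(M_i)$. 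All three parts of the argument (factoriality, nonamenability, primeness) are then carried out for finite--trace corners of $\core(M)$, where the tracial techniques apply.

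Two further points. Your nonamenability argument is not really an argument: the claim that amenability of $M$ would force ``$B\subset M_i$ co--amenable of index two'' is not justified and is not how the paper proceeds. Instead, using that $\core(B)\subset\core(M_i)$ has no trivial corner and $\core(B)$ is of type~\tI, one finds in every finite--trace corner $p\,\core(M_i)\,p$ a Haar unitary $u_i$ with $E_{\core(B)}(u_i^n)=0$ for $n\neq 0$; the two unitaries generate a copy of $\rL(\F_2)$ inside $p\,\core(M)\,p$, so no corner of $\core(M)$ is amenable. For primeness, the paper's corrected version of \cite[Theorem~5.2]{CH08} does give, in the core, a projection $q\in\cZ(\core(M_i))$ with $q\,\core(M)\,q=\core(M_i)\,q$; the extra step you allude to (``one constructs a projection exhibiting a trivial corner'') is made precise by writing $q=(q-E_{\core(B)}(q))+E_{\core(B)}(q)$ and using orthogonality of reduced words to extract a projection $p\in\cZ(\core(B))$ with $p\,\core(M_{i'})\,p=\core(B)\,p$, which is exactly a trivial corner for $\core(B)\subset\core(M_{i'})$ --- contradicting the hypothesis (transported to the core). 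Without the passage to $\core(M)$ and this concrete projection calculation, your sketch does not close.
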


Theorem \ref{thmD} says in particular that an amalgamated free product $M_1 *_B M_2$ over a type \tI\ von Neumann algebra $B$ and inclusions $B \subset M_i$ without trivial corner, is always a factor. This generalizes earlier factoriality results of Ueda (see \cite{ueda-pacific,ueda-cartan-II,ueda-transactions}). Very recently, in \cite[Theorem 4.3]{ueda12}, Ueda independently established a more general factoriality theorem for amalgamated free products. For details, we refer to the discussion preceding Theorem \ref{amalgamated}.

Recall that for an inclusion of countable discrete groups $\Sigma < H$ and an injective group homomorphism $\theta : \Sigma \to H$, the HNN extension $\HNN(\Sigma, H, \theta)$ is the group generated by a copy of $H$ and an extra letter $t$ (called the stable letter), such that $t \sigma t^{-1} = \theta (\sigma)$, for all $\sigma \in \Sigma$.

\begin{lettercor}\label{corE}
Let $\Gamma \curvearrowright (X, \mu)$ be a nonsingular free ergodic action of a countable discrete group on a standard measure space. Assume that one of the following holds.
\begin{enumerate}
\item $\Gamma = \Gamma_1 \ast_\Sigma \Gamma_2$ with $\Sigma$ a finite group and $\Gamma_i \curvearrowright (X, \mu)$ a recurrent action for $i = 1, 2$.
\item $\Gamma = \HNN(\Sigma, H, \theta)$ with $\Sigma$ a finite group and $H \curvearrowright (X, \mu)$ a recurrent action.
\end{enumerate}
Then $\rL^\infty(X) \rtimes \Gamma$ is a nonamenable prime factor and the orbit equivalence relation $\cR(\Gamma \curvearrowright X)$ is indecomposable.
\end{lettercor}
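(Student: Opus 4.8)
The plan is to recognise $M := \rL^\infty(X) \rtimes \Gamma$ as an amalgamated free product (in case~(1)), respectively an HNN extension (in case~(2)), of von Neumann algebras over the type \tI\ subalgebra $B := \rL^\infty(X) \rtimes \Sigma$, and then to invoke Theorem~\ref{thmD}. In case~(1), writing $M_i = \rL^\infty(X) \rtimes \Gamma_i$, the group splitting $\Gamma = \Gamma_1 \ast_\Sigma \Gamma_2$ together with the normal form of its elements yields the canonical identification
\[
\rL^\infty(X) \rtimes \Gamma \;=\; M_1 \ast_B M_2
\]
with respect to the canonical conditional expectations $E_i : M_i \to B$. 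In case~(2), writing $N = \rL^\infty(X) \rtimes H$, the splitting $\Gamma = \HNN(\Sigma, H, \theta)$ presents $M$ as an HNN extension of $N$ over its two subalgebras $\rL^\infty(X) \rtimes \Sigma$ and $\rL^\infty(X) \rtimes \theta(\Sigma)$ (each with expectation, and intertwined by $\Ad$ of the canonical unitary implementing the stable letter), which one realises as a nonzero corner of an amalgamated free product as in the main text. In both cases, since $\Sigma$ is finite, $B = \rL^\infty(X) \rtimes \Sigma$ is of type \tI; and since $\Gamma \actson X$ is free, so are the restricted actions $\Gamma_i \actson X$ and $H \actson X$, so that $M_i = \rL(\cR(\Gamma_i \actson X))$ and $N = \rL(\cR(H \actson X))$.

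Next I would verify the hypotheses of Theorem~\ref{thmD} for the inclusions $B \subset M_i$ (resp.\ for the two copies of $\rL^\infty(X) \rtimes \Sigma$ inside $N$). These are with expectation, by the $E_i$ (resp.\ the canonical expectations of $N$), and $B$ is of type \tI\ by the previous paragraph. For the absence of a trivial corner: if $p \in B' \cap M_i$ were a nonzero projection with $Bp = pM_ip$, then $pM_ip = Bp$ would be a direct summand of $B$, hence of type \tI; denoting by $z \in Z(M_i)$ the central support of $p$ and using that reduction by a projection of full central support preserves the type, one gets that $M_i z$ is of type \tI, contradicting the recurrence of $\Gamma_i \actson X$, which says precisely that $M_i = \rL(\cR(\Gamma_i \actson X))$ has no type \tI\ direct summand; the HNN case is identical with $H$ in place of $\Gamma_i$. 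Finally, $M_1' \cap M_2' \cap B = \C$: an element of $M_1' \cap M_2' \cap B$ commutes with $\rL^\infty(X)$, which is maximal abelian in $M$ because $\Gamma \actson X$ is free, so it lies in $\rL^\infty(X)$; it also commutes with $u_g$ for all $g \in \Gamma_1 \cup \Gamma_2$, hence for all $g \in \Gamma$, so it is $\Gamma$-invariant and therefore scalar by ergodicity of $\Gamma \actson X$. The corresponding non-degeneracy hypothesis in the HNN case reduces in the same way to the relative commutant of $\rL^\infty(X)$ in $M$ being $\rL^\infty(X)$ and to $\rL^\infty(X)^\Gamma = \C$, both of which hold because $\Gamma \actson X$ is free and ergodic.

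By Theorem~\ref{thmD} in case~(1), and by its HNN version (established in the main text via the realisation of HNN extensions as corners of amalgamated free products) in case~(2), $M = \rL^\infty(X) \rtimes \Gamma$ is a nonamenable prime factor. This proves the first assertion. For the second, suppose the orbit equivalence relation $\cR := \cR(\Gamma \actson X)$ decomposes as $\cR = \cS_1 \times \cS_2$ with $\cS_1, \cS_2$ recurrent nonsingular equivalence relations. Then $M = \rL(\cR) = \rL(\cS_1) \ovt \rL(\cS_2)$, and since $M$ is a factor, $\C = Z(M) = Z(\rL(\cS_1)) \ovt Z(\rL(\cS_2))$, which forces both $\rL(\cS_i)$ to be factors. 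By recurrence each $\rL(\cS_i)$ has no type \tI\ direct summand, hence is a non type \tI\ factor; then $M$ is a tensor product of two non type \tI\ factors, contradicting its primeness. Therefore $\cR$ is indecomposable.

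The one genuinely delicate point is the first step: identifying the nonsingular crossed product with an amalgamated free product, respectively an HNN extension, of von Neumann algebras over $B$ in the non-tracial setting, where one works with conditional expectations in place of traces — together with, in case~(2), the passage through a corner of an amalgamated free product so that Theorem~\ref{thmD} becomes applicable. Once these identifications are available, verifying the type \tI-ness of $B$, the absence of trivial corners, and the triviality of $M_1' \cap M_2' \cap B$ is routine.
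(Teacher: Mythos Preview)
Your treatment of case~(1) is correct and matches the paper's argument: identify $M = M_1 \ast_B M_2$, observe that $B = \rL^\infty(X) \rtimes \Sigma$ is of type \tI\ because $\Sigma$ is finite, that the $M_i$ have no type \tI\ direct summand by recurrence, hence $B \subset M_i$ has no trivial corner, and apply Theorem~\ref{thmD}. Your argument for $M_1' \cap M_2' \cap B = \C$ and for the indecomposability of $\cR$ are also fine.

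Case~(2), however, has a genuine gap. There is no ``HNN version of Theorem~\ref{thmD}'' in the paper, and one cannot be obtained simply by passing to the corner-of-amalgamated-free-product realisation. In the paper's notation, writing $P = \rL^\infty(X) \rtimes H$ and $N = \rL^\infty(X) \rtimes \Sigma$, the HNN extension is realised as
\[
M \;=\; e_{11}\bigl(\M_2(P) \ast_{N \oplus N} \M_2(N)\bigr)e_{11} \; .
\]
The problem is that the inclusion $N \oplus N \subset \M_2(N)$ \emph{does} have a trivial corner (take $p = e_{11}$), so the hypotheses of Theorem~\ref{thmD} fail for this amalgamated free product. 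Your verification that $\rL^\infty(X) \rtimes \Sigma \subset \rL^\infty(X) \rtimes H$ has no trivial corner is correct but irrelevant: it does not survive the passage to the $2 \times 2$ matrix picture on the $\M_2(N)$ side.

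The paper instead proceeds in two separate steps. First, nonamenability of $M$ is established directly by exhibiting the subgroup $\Lambda = H \ast_\Sigma t^{-1}Ht < \Gamma$ and applying Theorem~\ref{amalgamated} to $\rL^\infty(X) \rtimes \Lambda$. Second, for primeness one cannot invoke Theorem~\ref{thmD} but must fall back on the dichotomy of Theorem~\ref{prime}: assuming $Q = \M_2(P) \ast_{N \oplus N} \M_2(N)$ is not prime, one obtains either a nonzero $p \in \cZ(\core(P))$ with $p\core(Q)p = \M_2(\core(P))p$, or a nonzero $q \in \cZ(\core(N)) \oplus \cZ(\core(N))$ with $q\M_2(\core(P))q = (\core(N) \oplus \core(N))q$. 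The second alternative is excluded because $\core(P)$ has no type \tI\ summand while $\core(N)$ is of type \tI; the first is excluded by an explicit argument in the Maharam extension showing that such a $p = \one_\cU$ would force the sets $g \cdot \cU$, $g \in \Gamma/H$, to be pairwise disjoint, which contradicts the recurrence of $H \actson X$. This last step is the substantive work you are missing.
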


Both in Theorem \ref{thmB} and Theorem \ref{thmC}, one needs to assume the nonamenability of the crossed product or equivalence relation before concluding primeness or indecomposability. This assumption is crucial since also nonamenable groups admit amenable nonsingular actions. As we will see in the proof of Corollary~\ref{corE}, the recurrence of $\Gamma_i \actson X$, resp.\ $H \actson X$, in the hypotheses of Corollary \ref{corE} ensures a priori that the action $\Gamma \actson X$ is nonamenable.

\section{Preliminaries}

As explained above, a von Neumann subalgebra $N \subset M$ is said to be \emph{with expectation,} if there exists a normal faithful conditional expectation $E : M \recht N$.

\subsection{Weakly amenable groups in the class $\QHreg$}

\begin{definition}[\cite{CS11}]
A countably infinite group $\Gamma$ belongs to the class $\QHreg$ if $\Gamma$ admits a unitary representation $\pi : \Gamma \to \cU(H)$ that is weakly contained in the regular representation and a proper map $c : \Gamma \to H$ satisfying $\sup_{x \in \Gamma} \|c(g x h) - \pi_g(c(x))\| < \infty$ for all $g, h \in \Gamma$.
\end{definition}

Chifan-Sinclair's class $\QHreg$ is closely related to Ozawa's class $\cS$ \cite{ozawa-kurosh}. Recall from \cite[Section 4]{ozawa-kurosh} that a countably infinite group $\Gamma$ belongs to the class $\cS$ if the left-right translation action of $\Gamma \times \Gamma$ on the Stone-Cech boundary $\partial^\beta \Gamma$ is topologically amenable. Thus, a group in the class $\cS$ is necessarily exact. As observed by Ozawa, the class of exact groups in $\QHreg$ coincides with the class $\cS$ (see \cite{CS11,CSU11}, as well as \cite[Proposition 2.7]{PV12}). So,
$$\{\mbox{exact groups}\} \cap \QHreg = \cS \; .$$

Recall from \cite{cowling-haagerup} that a countable group $\Gamma$ is \emph{weakly amenable} if there exists a sequence of finitely supported functions $\varphi_n : \Gamma \to \C$ such that $\varphi_n \to 1$ pointwise and such that the maps $u_g \mapsto \varphi_n(g) u_g$ extend linearly to completely bounded maps ${\rm m}_{\varphi_n} : \rL(\Gamma) \to \rL(\Gamma)$ with $\sup_n \|{\rm m}_{\varphi_n}\|_{\cb} < \infty$. A weakly amenable group is necessarily exact.

\subsection{Two elementary lemmas}

For the convenience of the reader, we provide detailed proofs for the following rather standard lemmas.

\begin{lem}\label{lem.equiv}
Let $(\cM,\Tr)$ be a von Neumann algebra with a normal semifinite faithful trace $\Tr$. Assume that $\cB \subset \cM$ is a maximal abelian von Neumann subalgebra such that $\Tr_{|\cB}$ is semifinite.
Then every finite trace projection in $\cM$ is equivalent with a projection in $\cB$.
\end{lem}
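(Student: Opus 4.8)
The plan is to reduce the statement to a standard comparison-of-projections argument inside $(\cM,\Tr)$ by first producing, for a given finite trace projection $p \in \cM$, a projection in $\cB$ with the same trace (or at least a trace that is comparable in the right direction), and then invoking the comparison theorem. Let $p \in \cM$ be a projection with $\Tr(p) < \infty$. Since $\Tr_{|\cB}$ is semifinite, I can exhaust the identity of $\cB$ by an increasing net of projections in $\cB$ of finite trace; because $\cB$ is maximal abelian, $\cB$ contains no minimal projections unless $\cM$ forces it, so I expect to be able to find, for any prescribed value $t \in [0,\Tr(p)]$, a projection $q \in \cB$ with $\Tr(q) = t$ — this uses that a diffuse abelian von Neumann algebra with a semifinite trace realizes every value in $[0,\infty]$, together with a small case analysis for the atomic part of $\cB$. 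The key point for the atomic part is that an atom $e$ of $\cB$ has $e\cM e$ a factor (by maximal abelianness of $\cB e$ in $e \cM e$, i.e.\ $e\cM e$ is of type \tI$_1$, so $e\cM e = \C e$), so atoms of $\cB$ are actually minimal projections of $\cM$.

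Next I would use the comparison theorem for the semifinite von Neumann algebra $\cM$: for the finite projection $p$ and any projection $q \in \cB$, there is a central projection $z$ such that $zp \precsim zq$ and $(1-z)q \precsim (1-z)p$. The strategy is to choose $q \in \cB$ cleverly so that this central cutting becomes trivial. Concretely, decompose $\cM$ over its center; on each central component the trace is (a multiple of) the canonical trace and the type is fixed, so it suffices to treat the type \tI$_n$ ($n \le \infty$) and type \tII$_\infty$ (and \tII$_1$) cases separately, in each of which $\Tr$ takes all values in an interval or a discrete set. In the type \tII case, pick $q \in \cB$ with $\Tr(q) = \Tr(p)$ exactly; then $p \sim q$ since equal finite traces force Murray–von Neumann equivalence in a semifinite factor-like component. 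In the type \tI$_n$ case, $\cB$ restricted to that component is a direct sum of $n$ copies of the diagonal, abelian projections in $\cM$ there are exactly the abelian projections, and a finite projection $p$ of trace $k$ (an integer multiple of the minimal trace) is equivalent to a sum of $k$ minimal projections, each of which can be taken in $\cB$ since $\cB$ contains a maximal abelian set of minimal projections of $\cM$.

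The main obstacle I anticipate is handling the interaction between the center of $\cM$ and the projection $p$: a finite trace projection $p$ need not be supported on a single central component, so one must glue together the equivalences obtained on the various pieces, being careful that the pieces of $p$ and the chosen $q \in \cB$ have matching traces component-by-component. This is where semifiniteness of $\Tr_{|\cB}$ is used again: it guarantees $\cB$ meets every central component in a nontrivial way with finite-trace projections, so the component-wise matching is always possible. Once the traces are matched on each central component, the comparison theorem upgrades ``$\precsim$ both ways'' to ``$\sim$'', and assembling the partial isometries over the orthogonal central pieces yields a single partial isometry implementing $p \sim q$ with $q \in \cB$. A secondary technical point is the reduction ``atom of $\cB$ $\Rightarrow$ minimal projection of $\cM$'': this follows because if $e \in \cB$ is an atom then $\cB e$ is a MASA in $e\cM e$ that is one-dimensional, forcing $e \cM e = \C e$; I would state this as a one-line observation rather than belabor it.
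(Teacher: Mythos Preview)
Your approach is workable in outline but takes a heavier route than the paper and leaves a genuine technical point unaddressed.

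The paper never tries to match traces exactly. Instead it shows that one can find a \emph{finite} projection $q \in \cB$ (each central cut $qz$ of finite trace) with $p \precsim q$: a short contradiction argument using only semifiniteness of $\Tr_{|\cB}$ gives this on some nonzero central piece, and Zorn over central projections gives it globally. Once $p \precsim q$ with $q \in \cB$ finite, one works in the finite von Neumann algebra $q\cM q$ with MASA $\cB q$ and invokes the known finite-case result \cite[Corollary F.8]{BO08} to get $p$ equivalent to a projection in $\cB q$. No type analysis, no direct integrals, no trace matching.

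Your approach instead decomposes $\cM$ over its center and matches the center-valued trace of $p$ by a projection in $\cB$ fiberwise. The gap is in the sentence ``pick $q \in \cB$ with $\Tr(q) = \Tr(p)$ exactly; then $p \sim q$'': equal scalar traces do not force equivalence outside a factor, so what you really need is $q \in \cB$ with the same \emph{center-valued} trace as $p$, i.e.\ $\Tr(zq)=\Tr(zp)$ for every central projection $z$. Producing such a $q$ requires a direct integral of $(\cM,\cB,\Tr)$ over $\cZ(\cM)$ together with a measurable selection $\omega \mapsto q_\omega \in \cB_\omega$ satisfying $\Tr_\omega(q_\omega)=\Tr_\omega(p_\omega)$; your phrase ``assembling the partial isometries over the orthogonal central pieces'' suggests a discrete decomposition that is not available when $\cZ(\cM)$ is diffuse. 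This can all be made rigorous, but it is precisely the technicality the paper's domination-then-reduce-to-finite argument sidesteps. Your observation that atoms of $\cB$ are minimal in $\cM$ is correct and handles the type \tI\ fibers, but the same measurability issue arises there as well.
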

\begin{proof}
Fix a finite trace projection $p \in \cM$. We first claim that there exists a nonzero central projection $z \in \cZ(\cM)$ and a finite trace projection $q \in \cB$ such that $pz \precsim q$. If $\Tr(1) < \infty$, we take $z=1=q$. So, assume that $\Tr(1) = +\infty$ and assume that such $z$ and $q$ do not exist. By comparability of projections, we get that $q \precsim p$ for all finite trace projections $q \in \cB$. It follows in particular that $\Tr(q) \leq \Tr(p)$ for all finite trace projections $q \in \cB$. Since $\Tr_{|\cB}$ is semifinite, we reached the contradiction that $\Tr(1) \leq \Tr(p) < \infty$.

By Zorn's lemma, take a maximal orthogonal family of central projections $(z_j)_{j \in J}$ in $\cZ(\cM)$ with the property that every $p z_j$ is subequivalent with a finite trace projection in $\cB$. Put $z = 1-\sum_{j \in J} z_j$. If $z \neq 0$, we apply the previous paragraph to $\cM z$, $pz$ and $\cB z$, and contradict the maximality of the family $(z_j)_{j \in J}$. So, $\sum_{j \in J} z_j = 1$.

Choose finite trace projections $q_j \in \cB z_j$ such that $p z_j \precsim q_j$. Put $q = \sum_{j \in J} q_j$. By construction, we have $p \precsim q$. Because $\cZ(\cM) \subset \cB$, we also have that $q \in \cB$. Since the central portions $q z_j = q_j$ of $q$ are finite, it follows that $q$ is a finite projection in $\cB$. So $q \cM q$ is a finite von Neumann algebra with maximal abelian von Neumann subalgebra $\cB q$. By \cite[Corollary F.8]{BO08}, every projection in $q \cM q$ is equivalent with a projection in $\cB q$. So also $p$ is equivalent with a projection in $\cB q \subset \cB$.
\end{proof}

\begin{lem}\label{lem.masa-typeI}
Let $P$ be a von Neumann algebra and $B \subset P$ an abelian subalgebra with expectation.
\begin{enumerate}
\item If $P$ is of type \tI, there exists a family of projections $p_i \in B' \cap P$ such that $\sum_i p_i = 1$ and such that every $p_i$ is an abelian projection in $P$.
\item If $p \in P$ is an abelian projection and $B \subset P$ is maximal abelian, there exists a partial isometry $v \in P$ such that $v^* v = p$, $vv^* \in B$ and $v P v^* = B vv^*$.
\end{enumerate}
\end{lem}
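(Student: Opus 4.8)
The plan is to reduce both parts to one elementary fact: \emph{an abelian von Neumann subalgebra $C$ of $\B(K)$ admitting a normal conditional expectation $E$ from $\B(K)$ is atomic.} To prove this, suppose $C$ is not atomic, pick a nonzero projection $z_0\in C$ with $Cz_0$ diffuse, and (replacing $\B(K)$ by $z_0\B(K)z_0$, onto which $E$ restricts because $z_0\in C$) assume $C$ itself is diffuse. Then $C$ contains a copy of $\rL^\infty[0,1]$; taking $u_n=e^{2\pi i n t}$ in it and applying the Riemann--Lebesgue lemma in the disintegration of $K$ over this copy, one gets $u_n\xi\to 0$ weakly for all $\xi\in K$, hence $u_n e u_n^*\to 0$ in the $\sigma$-weak topology for every rank one projection $e$. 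Since the $u_n$ lie in the abelian algebra $C$ and $E(e)\in C$, normality gives $E(e)=E(u_n e u_n^*)\to 0$, so $E(e)=0$ and, by faithfulness, $e=0$; this contradiction proves the claim.

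For part (1) I would first arrange that $\cZ(P)\subseteq B$. Assuming $P$ has a faithful normal state (the only case needed), fix such a state $\omega_0$ on $B$; the modular group of $\omega_0\circ E$ fixes $B$ pointwise and automatically fixes $\cZ(P)$ pointwise, so it leaves $B_1:=B\vee\cZ(P)$ globally invariant and Takesaki's theorem yields a faithful normal conditional expectation $P\to B_1$. As $B_1\subseteq B'\cap P$, it suffices to produce the required family inside $B_1$, so we may replace $B$ by $B_1$, cut $P$ by the central projections (now lying in $B$) on which it is homogeneous, and treat each piece separately; thus we may assume $P=\int_Y^\oplus \B(K)\,\rd\nu(y)$ with $\cZ(P)=\rL^\infty(Y)\subseteq B$. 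Now $B$ and $E$ are decomposable, so almost every fibre $B_y\subseteq\B(K)$ is abelian with a normal conditional expectation, hence atomic by the fact above. Refining the fibrewise atoms with a measurable field of orthonormal bases gives a measurable field $y\mapsto D_y$ of atomic masas of $\B(K)$ with $B_y\subseteq D_y$; then $A:=\int_Y^\oplus D_y\,\rd\nu(y)$ is a masa of $P$ with $B\subseteq A\subseteq B'\cap P$. Enumerating the rank one atoms of $D_y$ as $(f_i(y))_i$ with $\sum_i f_i(y)=1$, the projections $p_i:=\int_Y^\oplus f_i(y)\,\rd\nu(y)$ lie in $A\subseteq B'\cap P$, sum to $1$, and satisfy $p_iPp_i=\int_Y^\oplus\C f_i(y)\,\rd\nu(y)\cong\cZ(P)$, so each $p_i$ is an abelian projection of $P$ (the families from the different homogeneous pieces are then combined).

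For part (2), write $c(\cdot)$ for central support and let $z=c(p)$; since $B$ is maximal abelian, $z\in\cZ(P)\subseteq B$, and $Pz$ is of type \tI\ with masa $Bz$ carrying a conditional expectation. Apply part (1) to $Bz\subseteq Pz$: because $(Bz)'\cap Pz=Bz$, the resulting abelian projections of $Pz$ all lie in $Bz$, and their sum is $z$. Disjointifying their central supports (set $z_i:=c(p_i)-\bigvee_{j<i}c(p_j)$ and $q:=\sum_i p_i z_i$) produces a projection $q\in B$ that is abelian in $P$ with $c(q)=z=c(p)$. As two abelian projections with the same central support are equivalent, choose $v\in P$ with $v^*v=p$ and $vv^*=q$; a short computation gives $vPv^*=v(pPp)v^*=qPq$, and since $q\in B$ and $Bq$ is a masa of the abelian algebra $qPq$ we obtain $qPq=Bq=B\,vv^*$, hence $vPv^*=B\,vv^*$, as required.

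The one substantive point is the atomicity fact of the first paragraph --- the only place any analysis enters --- while everything else is disintegration theory plus the bookkeeping of measurable selections, and that bookkeeping, i.e.\ making the fibrewise refinement and reassembly precise, is the step needing the most care. (One can also avoid Takesaki's theorem and direct integrals in part (1) by a Zorn argument over orthogonal families of abelian projections of $P$ inside $B'\cap P$: the crux is then to verify that for a nonzero $q\in B'\cap P$ the formula $x\mapsto q\,E(q)^{-1/2}E(x)E(q)^{-1/2}q$, with generalized inverse, defines a faithful normal conditional expectation of $qPq$ onto $Bq$, which lets one extract a nonzero abelian subprojection commuting with $B$ and contradict maximality.)
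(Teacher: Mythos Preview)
Your proof is correct and takes a genuinely different route from the paper's. For part~(1), the paper argues by a Zorn/maximality reduction: it suffices to produce one nonzero abelian projection of $P$ lying in $B'\cap P$, and the paper does this by choosing a faithful normal semifinite trace $\Tr$ on the type~\tI\ algebra $P$, forming a state $\varphi=\tau\circ E$ for a normal state $\tau$ on $B$, observing that the Radon--Nikodym density $\Delta$ of $\varphi$ with respect to $\Tr$ is affiliated with $B'\cap P$, and taking a nonzero finite-trace spectral projection $r$ of $\Delta$; inside the finite type~\tI\ corner $rPr$ one then extends $Br$ to a masa and invokes \cite[Corollary~F.8]{BO08} to locate an abelian projection there. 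Your approach instead isolates the pleasant fact that an abelian subalgebra of $\B(K)$ with normal expectation must be atomic, arranges $\cZ(P)\subset B$ via Takesaki's theorem, and disintegrates over the center to read off the abelian projections fibrewise; this is conceptually transparent but leans on direct-integral machinery (the decomposition of $E$ into fibrewise expectations and the measurable selection of atomic masas containing the $B_y$) that you rightly flag as the step needing care, whereas the paper's argument stays inside a single finite corner and avoids disintegration altogether. Your parenthetical alternative via Zorn and the compressed expectation $x\mapsto qE(q)^{-1/2}E(x)E(q)^{-1/2}q$ is closer in spirit to the paper, though the paper reaches the finite corner through the Radon--Nikodym density rather than through that formula. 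For part~(2), the paper runs a maximality argument on central projections $z_i$ for which $pz_i$ is equivalent to a projection in $Bz_i$, closing it with a left/right support comparison; your approach---apply part~(1) to get abelian projections in $B$ summing to $z=c(p)$, disjointify their central supports to build a single abelian $q\in B$ with $c(q)=c(p)$, then invoke equivalence of abelian projections with equal central support---is a clean and arguably more direct alternative.
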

\begin{proof}
(1) Let $(p_i)$ be a maximal orthogonal family of projections in $B' \cap P$ that are abelian as projections in $P$. Put $q = 1 - \sum_i p_i$ and assume that $q \neq 0$. Replacing $B$ by $Bq$ and replacing $P$ by $qPq$, we actually only have to prove that $B' \cap P$ contains a nonzero projection $p$ that is abelian in $P$.

Choose a normal state $\tau$ on $B$ and choose a faithful normal conditional expectation $E : P \recht B$. Denote by $e \in B$ the support projection of $\tau$. Define the faithful normal state $\vphi$ on $ePe$ given by $\vphi = \tau \circ E$. Choose a faithful normal semifinite trace $\Tr$ on $P$. Take $\Delta \in \rL^1(ePe)^+$ such that $\vphi(x) = \Tr(\Delta x)$ for all $x \in ePe$. Since $\vphi(x b) = \vphi(b x)$ for all $x \in ePe$ and all $b \in Be$, it follows that $\Delta$ is affiliated with $(B' \cap P)e$. Take $\eps > 0$ small enough such that $r := \one_{[\eps,+\infty)}(\Delta)$ is nonzero. So, $r$ is a nonzero projection in $B' \cap P$. Since $\Tr(\Delta) = \vphi(e) = 1$, we also get that $\Tr(r) < +\infty$. So, the von Neumann algebra $rPr$ is of finite type \tI\ and $Br \subset rPr$ is an abelian subalgebra. Choose $Br \subset D \subset rPr$ such that $D \subset rPr$ is maximal abelian. Since $rPr$ is of type \tI, it contains a nonzero abelian projection $f$. Since $rPr$ is finite and $D \subset rPr$ is maximal abelian, it follows from \cite[Corollary F.8]{BO08} that $f$ is equivalent with a projection $p$ in $D$. Then also $p$ is an abelian projection. Since $D \subset B' \cap P$, we have found the required abelian projection in $B' \cap P$.

(2) Fix an abelian projection $p \in P$. Denote by $z \in \cZ(P)$ the central support of $p$. Replacing $P$ by $Pz$ and $B$ by $Bz$, we may assume that $P$ is of type \tI\ and that the central support of $p$ equals $1$. Take a maximal family of orthogonal central projections $z_i \in \cZ(P)$ with the property that $p z_i$ is equivalent with a projection in $B z_i$. Write $e = 1 - \sum_i z_i$. We prove that $e=0$. If $e \neq 0$, also $ep \neq 0$ and statement (1) provides a projection $q \in B' \cap P = B$ such that $q$ is abelian in $P$ and $q e p \neq 0$. The right support of $qep$ belongs to $p e P p = p e \cZ(P)$ and is equivalent with the left support of $q e p$, which belongs to $qPq e = \cZ(P) qe \subset Be$. This contradicts the maximality of the family $(z_i)$ and we conclude that $\sum_i z_i = 1$. It follows that $p$ is equivalent with a projection in $B$. Since $p$ is an abelian projection and $B \subset P$ is maximal abelian, any partial isometry $v \in P$ with $v^* v = p$ and $vv^* \in B$, also satisfies $v P v^* = B vv^*$.
\end{proof}

\subsection{Variations on Popa's intertwining techniques} \label{sec.intertwine}

In \cite{popa-malleable1,Po01}, Popa discovered the following powerful method to unitarily conjugate subalgebras of a finite von Neumann algebra. Let $(M, \tau)$ be a tracial von Neumann algebra and $A\subset 1_A M 1_A$, $B \subset1_B M 1_B$ von Neumann subalgebras. By \cite[Corollary 2.3]{popa-malleable1} and \cite[Theorem A.1]{Po01}, the following three statements are equivalent:
\begin{itemize}
\item There exist $n \geq 1$, a possibly nonunital normal $\ast$-homomorphism $\pi : A \to \M_n(B)$ and a nonzero partial isometry $v \in \M_{1, n}(1_A M 1_B)$ such that $a v = v \pi(a)$ for all $a \in A$.

\item There is no net of unitaries $(w_i)$ in $A$ such that
$$\lim_{i \to \infty} \|E_B(x^* w_i y)\|_2 = 0, \forall x, y \in 1_A M 1_B \; .$$

\item There exists a nonzero $A$-$B$-subbimodule of $1_A \rL^2(M) 1_B$ that has finite dimension as a right $B$-module.
\end{itemize}
If one of the previous equivalent conditions is satisfied, we say that $A$ {\it embeds into} $B$ {\it inside} $M$ and write $A \preceq_M B$.

Let $M$ be any von Neumann algebra and denote by $(M, H, J, \mathfrak P)$ a standard form for $M$. Recall from \cite[Chapter IX]{takesakiII} that any vector $\xi \in H$ has a unique polar decomposition $\xi = u |\xi|$ with  $|\xi| \in \mathfrak P$ and $u \in M$ a partial isometry such that $u^*u = [M' |\xi|]$ and $uu^* = [M' \xi]$. Moreover, whenever $N \subset 1_N M 1_N$ is a von Neumann subalgebra and $\xi \in H$ is
a vector that satisfies $x \xi = J x^* J \xi$ for all $x \in N$, then the equality $x u = u x$ holds inside $M$, for all $x \in N$.

We need the following straightforward generalization of Popa's Intertwining Theorem. Its proof is almost identical to the proof of \cite[Theorem 2.1, Corollary 2.3]{popa-malleable1}, but we include the details for the sake of completeness. A further generalization can be found in \cite[Proposition 3.1]{ueda12}.

\begin{theo}\label{intertwining-general}
Let $M$ be any $\sigma$-finite von Neumann algebra. Let $A \subset 1_A M 1_A$ and $B \subset 1_B M 1_B$ be von Neumann subalgebras such that $B$ is finite and with expectation $E_B : 1_B M 1_B \recht B$. The following are equivalent.
\begin{enumerate}
\item There exist $n \geq 1$, a possibly nonunital normal $\ast$-homomorphism $\pi : A \to \M_n(B)$ and a nonzero partial isometry $v \in \M_{1, n}(1_A M 1_B)$ such that $a v = v \pi(a)$ for all $a \in A$.

\item There is no net of unitaries $(w_i)$ in $\cU(A)$ such that $E_B(x^* w_i y) \to 0$ $\ast$-strongly for all $x, y \in 1_A M 1_B$.
\end{enumerate}
\end{theo}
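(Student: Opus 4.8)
The plan is to mimic the proof of \cite[Theorem 2.1, Corollary 2.3]{popa-malleable1} in the non-tracial $\sigma$-finite setting, using the conditional expectation $E_B$ onto the \emph{finite} algebra $B$ to replace the ambient trace, and the theory of standard forms recalled just before the statement to produce the partial isometry. The implication (1) $\Rightarrow$ (2) is the easy direction: if $av = v\pi(a)$ for all $a \in A$ with $v$ a nonzero partial isometry, then for any net of unitaries $w_i \in \cU(A)$ one computes that $E_{\M_n(B)}(\pi(a)^* v^* w_i v \pi(a))$ cannot go to $0$ (take $a = 1$), whence $E_B(x^* w_i y) \not\to 0$ for suitable $x, y$ built from the matrix entries of $v$; this is a routine unpacking, using that $v^* v$ is a nonzero projection majorised by a sum of the $\pi(1)$-corners.

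For (2) $\Rightarrow$ (1), I would fix a normal faithful state $\tau_B$ on $B$ and form the normal faithful state $\varphi = \tau_B \circ E_B$ on $1_B M 1_B$ (cutting down by support projections if $\tau_B$ is not faithful, though one can take it faithful since $B$ is $\sigma$-finite). Realise everything on $L^2(M,\varphi)$ (equivalently, in a standard form $(M,H,J,\mathfrak{P})$), let $e_B$ be the Jones projection of $E_B$, and consider the (completely positive, normal) map $\Phi\colon x \mapsto \int_{\cU(A)} w^* x w \, d\mu(w)$ on the finite von Neumann algebra $\langle 1_B M 1_B, e_B\rangle$, or more concretely the element obtained by averaging $w^* e_B w$ over a suitable weak-$*$ limit point of finite convex combinations of unitaries in $A$. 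Negation of (2) gives a net $(w_i)$ in $\cU(A)$ with $\|E_B(x^* w_i y)\|_{2,\tau_B} \to 0$ for all $x,y$; the \emph{contrapositive assumption} is that no such net exists, hence $\inf_{w \in \cU(A)} \|E_B(x^* w y)\|_{2,\tau_B}$ is bounded below for some pair $x,y$, equivalently $\|\sum_k a_k^* e_B b_k\|$ stays bounded away from $0$ along convex averages. A standard convexity/closed-convex-hull argument (the norm-closed convex hull of $\{w^* e_B w : w \in \cU(A)\}$ in the finite algebra $\langle 1_B M 1_B, e_B\rangle$ with its natural trace $\widehat{\varphi}$) then produces a nonzero element $d$ in that convex hull with $a^* d = d a^*$... — more precisely a nonzero positive $d$ in $A' \cap \langle 1_B M 1_B, e_B\rangle_+$ with $\widehat{\varphi}(d) < \infty$ and $d \leq$ (a finite multiple of) $1$. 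Cutting $d$ by a spectral projection $p = \one_{[\varepsilon,\infty)}(d)$ gives a nonzero projection $p \in A' \cap \langle 1_B M 1_B, e_B\rangle$ with $\widehat{\varphi}(p) < \infty$, i.e.\ a nonzero $A$-$B$-subbimodule of $L^2(M,\varphi)\, e_B$-closure that is finitely generated as a right $B$-module.

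From such a finite-index $A$-$B$-subbimodule $\cK \subset 1_A L^2(M,\varphi) 1_B$ which is finitely generated (hence finite-dimensional, say of dimension $\le n$) as a right Hilbert $B$-module, I would pick a nonzero vector $\xi \in \cK$, or better a right-$B$-module frame, and use the polar-decomposition fact quoted in the excerpt: a vector $\xi$ satisfying $x\xi = Jx^*J\xi$ produces a partial isometry $u \in M$ intertwining $x$. Concretely, viewing $\cK$ as a right $B$-module of finite dimension $n$ embedded in $L^2(M)$, the orthogonal projection onto $\cK$ is an element of $e_B \langle M, e_B\rangle e_B \cong$ a corner that is finite over $B$, and the Stinespring/Pimsner-style reconstruction gives the $\ast$-homomorphism $\pi\colon A \to \M_n(B)$ together with $v \in \M_{1,n}(1_A M 1_B)$ with $av = v\pi(a)$; here one needs $B$ finite so that $\langle 1_B M 1_B, e_B\rangle$ carries a normal semifinite trace $\widehat{\varphi} = \varphi \circ (\text{dual weight construction})$ restricting to a finite trace on the relevant corner, which is exactly where the hypothesis ``$B$ finite'' is used.

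The main obstacle, and the only place real care is needed beyond transcribing \cite{popa-malleable1}, is the \textbf{non-tracial ambient algebra}: $M$ itself carries no trace, so the averaging and convexity arguments must be performed in $\langle 1_B M 1_B, e_B\rangle$ with respect to the canonical \emph{semifinite} weight $\widehat{\varphi}$ dual to $\varphi$, and one must check (i) that $\widehat{\varphi}$ restricts to a finite faithful trace on the hereditary subalgebra generated by $e_B$ — this holds because $B$ is finite, since $e_B \langle 1_BM1_B, e_B\rangle e_B \cong B$ and $\widehat{\varphi}$ is a trace there — and (ii) that the weak-$*$ limit of the averaged projections lands in $A' \cap \langle 1_BM1_B, e_B\rangle$ and has finite trace. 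Both points go through precisely because $e_B$ ``sees'' only the finite algebra $B$; the asymmetry between $A$ (arbitrary) and $B$ (finite, with expectation) in the statement is exactly what makes this work, and no extra assumption on $M$ or $A$ is needed. Modular-theoretic subtleties (the modular group of $\varphi$ acting on $e_B$, etc.) do not interfere because $E_B$ commutes with the relevant modular automorphisms by Takesaki's theorem, $B$ being globally invariant under $\sigma^\varphi$.
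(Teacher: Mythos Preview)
Your plan is essentially the paper's proof; the only point to tighten is the averaging step. You write ``average $w^* e_B w$ over $\cU(A)$'', but since $1_A$ need not lie under $1_B$, the object $w^* e_B w$ for $w \in \cU(A)$ is not the right thing to consider (it may even vanish). The paper instead takes the finite set $\cF \subset 1_A M 1_B$ produced by the negation of (2), forms $\xi = \sum_{x \in \cF} x e_B x^*$ (which lives in the commutant $\cN = (JBJ)'$ and has finite canonical trace), and averages $w \xi w^*$; the lower bound $\sum_{x,y \in \cF}\|E_B(x^* w y)\|_2^2 \geq \delta$ then directly shows the minimal-$\|\cdot\|_{2,\Tr}$ element of the closed convex hull is nonzero. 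Once you make this adjustment, your spectral-projection, finite-$B$-dimensional bimodule, and polar-decomposition steps match the paper exactly. (The modular-theoretic remark at the end is harmless but unnecessary: no modular automorphism enters the argument, only the canonical trace on $(JBJ)'$ coming from the trace on $B$.)
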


\begin{proof}
We only need to prove that $(2) \Longrightarrow (1)$. Fix a faithful normal tracial state $\tau$ on $B$ and define the faithful normal state $\varphi$ on $1_B M 1_B$ given by $\varphi(x) = \tau (E_B(x))$ for all $x \in 1_BM1_B$. Extend $\vphi$ to a faithful normal positive functional on $M$ by choosing a faithful normal state on $1_B^\perp M 1_B^\perp$.
Denote $\cH := J 1_B J \, \rL^2(M, \varphi)$. If $z$ denotes the central support of $1_B$ in $M$, observe that $Mz$ is faithfully represented on $\cH$. Denote by $e_B$ the orthogonal projection of $\cH$ onto $\rL^2(B,\tau) \subset \cH$. Consider the von Neumann algebra $\cN := \B(\cH) \cap (J B J)'$. Since $\tau$ is a faithful trace on $B$, there is a canonical faithful normal semifinite trace $\Tr$ on $\cN$ satisfying the formula $\Tr(T T^*) = \tau(T^* T)$ for all bounded right $B$-linear maps $T : \rL^2(B, \tau) \to \cH$. Observe that $e_B \in \cN$ with $\Tr(e_B) = 1$. Since $Mz$ acts faithfully on $\cH$, we will regard $Mz$ as a von Neumann subalgebra of $\cN$.

On bounded subsets of $B$, the strong$^*$ topology coincides with the $\|\,\cdot\,\|_2$-topology. So,
following the lines of \cite[Corollary 2.3]{popa-malleable1}, if $(2)$ holds, we can find $\delta > 0$ and a finite subset $\cF \subset 1_A M 1_B$ such that $\sum_{x, y \in \cF} \|E_B(x^* w y)\|_2^2 \geq \delta$ for all $w \in \cU(A)$. Let $\xi = \sum_{x \in \cF} x e_B x^* \in \cN_+$ and note that $\Tr(\xi) = \sum_{x \in \cF} \tau(E_B(x^*x)) < \infty$. Denote by $\eta \in \cN_+$ the unique element of minimal $\|\cdot\|_{2, \Tr}$-norm in the weak closure of the convex hull of $\{w \xi w^* : w \in \cU(A)\}$. We then have that $\eta \in A' \cap 1_A \cN_+ 1_A$ and $\Tr(\eta) \leq \Tr(\xi) < \infty$. A simple computation shows that for all $w \in \cU(A)$,
$$\sum_{y \in \cF} \Tr(e_B y^* w^* \xi w y e_B) = \sum_{x, y \in \cF} \tau(E_B(x^* w y)^*E_B(x^* w y)) \geq \delta.$$
This shows that $\sum_{y \in \cF} \Tr(e_B y^* \eta y e_B) \geq \delta$ and thus $\eta \neq 0$. By taking a suitable spectral projection of $\eta$, we can find a nonzero projection $p \in A' \cap 1_A \cN 1_A$ such that $\Tr(p) < \infty$.

The nonzero $A$-$B$-bimodule $p \cH$ has thus finite dimension over $B$. We may then find a nonzero $A$-$B$-subbimodule $\cK \subset p \cH$ which is finitely generated as a right $B$-module (see e.g.\ \cite[Lemma A.1]{vaes-bourbaki-popa}). Let $n \geq 1$ and $q \in \M_n(B)$ a nonzero projection such that there exists a right $B$-module isomorphism $\psi : \cK_B \to (q \rL^2(B)^{\oplus n})_B$. Let $\pi : A \to q \M_n(B) q$ be the unital $\ast$-homomorphism such that $\psi(a \zeta) = \pi(a) \psi(\zeta)$ for all $a \in A$ and all $\zeta \in \cK$. Take $\xi_i \in \cK$ such that $\psi(\xi_i) = q (0, \dots, 0, 1_B, 0, \dots, 0)$. With $\xi = (\xi_i) \in \M_{1, n}(\C) \otimes \cK$, a simple computation shows that $a \xi = J\pi(a)^* J \xi$ for all $a \in A$. Taking the polar decomposition of the vector $\xi$ in a standard representation of $\M_n(M)$, we find a nonzero partial isometry $v \in \M_{1,n}(1_A M 1_B)$ such that $a v = v \pi(a)$ for all $a \in A$.
\end{proof}

\begin{df}
Let $M$ be any $\sigma$-finite von Neumann algebra. Let $A \subset 1_A M1_A$ and $B \subset 1_B M 1_B$ be von Neumann subalgebras such that $B$ is finite and with expectation. We say that $A$ \emph{ embeds into} $B$ \emph{inside} $M$ and denote $A \preceq_M B$ if one of the equivalent conditions of Theorem \ref{intertwining-general} is satisfied. Note that this forces $A$ to have a finite direct summand.
\end{df}

For Cartan subalgebras $A$ and $B$, the embedding $A \preceq_M B$ is equivalent with the unitary conjugacy of $A$ and $B$. This fundamental result was proven by Popa in \cite{Po01} in the type \tIIone\ setting. His proofs carries over almost verbatim to the type \tIII\ setting. Again we provide the details for the sake of completeness.

\begin{theo}[Popa, \cite{Po01}]\label{cartan}
Let $M$ be any $\sigma$-finite von Neumann algebra and $A, B \subset M$ unital maximal abelian $\ast$-subalgebras with expectation. Consider the following assertions.
\begin{enumerate}
\item $A \preceq_M B$.
\item There exists a nonzero partial isometry $w \in M$ such that $w^*w \in A$, $ww^* \in B$ and $w A w^* = Bww^*$.
\item There exists a unitary $u \in M$ such that $u A u^* = B$.
\end{enumerate}
Then $(1) \Longleftrightarrow (2) \Longleftarrow (3)$. If moreover $A, B \subset M$ are Cartan subalgebras and $M$ is a factor, then $(1) \Longleftrightarrow (2) \Longleftrightarrow (3)$.
\end{theo}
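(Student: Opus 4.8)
The plan is to establish $(3)\Rightarrow(2)\Rightarrow(1)\Rightarrow(2)\Rightarrow(3)$; the implication $(1)\Rightarrow(2)$ is the heart of the general statement, while $(2)\Rightarrow(3)$ is where the Cartan and factoriality hypotheses enter. The first two implications are easy. For $(3)\Rightarrow(2)$ take $w=u$. For $(2)\Rightarrow(1)$, let $w$ be as in $(2)$ and set $p:=w^*w\in A$, $q:=ww^*\in B$; note $wp=w$ and $pw^*=w^*$. Define $\pi\colon A\to Bq$ by $\pi(a)=waw^*$: this maps into $Bq=wAw^*$, and it is a normal $*$-homomorphism since $\pi(a)\pi(b)=wa(w^*w)bw^*=wabpw^*=w(ab)w^*$, using that $p$ is central in $A$. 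Then $v:=w^*$ is a nonzero partial isometry in $1_AM1_B=M$, and $av=v\pi(a)$ for all $a\in A$ (using $pw^*=w^*$ and the centrality of $p$), so $(1)$ holds with $n=1$.

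For $(1)\Rightarrow(2)$, suppose $\pi\colon A\to\M_n(B)$ and a nonzero partial isometry $v\in\M_{1,n}(M)$ satisfy $av=v\pi(a)$ for all $a\in A$; inspecting the proof of Theorem~\ref{intertwining-general}, we may assume in addition that $v^*v\in\M_n(B)$. Taking adjoints and replacing $a$ by $a^*$ yields $v^*a=\pi(a)v^*$, hence $vv^*a=v\pi(a)v^*=avv^*$ and therefore $vv^*\in A'\cap M=A$. The main step is the reduction of $n$ to $1$. Since $B$ is abelian, $\M_n(B)=B\ovt\M_n(\C)$ is a finite von Neumann algebra of type \tI; applying Lemma~\ref{lem.masa-typeI}(1) to a maximal abelian subalgebra of $\pi(1)\M_n(B)\pi(1)$ containing $\pi(A)$ gives an abelian projection $f$ of $\M_n(B)$ that commutes with $\pi(A)$, with $vf\neq0$, and such that $f\M_n(B)f$ is isomorphic, through a partial isometry $\rho\in\M_n(B)$, to a corner of $B$. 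Replacing $\pi$ by $\Ad(\rho)\circ\bigl(\pi(\,\cdot\,)f\bigr)$ and $v$ by the polar part of $vf\rho^*$, we reach a projection $q_B\in B$, a normal $*$-homomorphism $\pi\colon A\to Bq_B$, and a nonzero partial isometry $v\in M$ with $av=v\pi(a)$, $vv^*\in A$ and $v^*v\in B$. Put $w:=v^*$. Then $w^*w=vv^*\in A$, $ww^*=v^*v\in B$, and $waw^*=v^*av=(v^*v)\pi(a)\in B(ww^*)$ for $a\in A$. Finally, with $p:=w^*w$, the algebra $wAw^*=w(Ap)w^*$ is maximal abelian in $(ww^*)M(ww^*)$ because $Ap$ is maximal abelian in $pMp$; being contained in the abelian algebra $B(ww^*)$, it must equal it, which is $(2)$. (Maximal abelianness of $A$ is used for $vv^*\in A$ and in the last step.)

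Now assume $A,B$ are Cartan subalgebras of the factor $M$ and prove $(2)\Rightarrow(3)$, following Popa~\cite{Po01}. Using Zorn's lemma choose a maximal orthogonal family $(w_i)$ of partial isometries with $w_i^*w_i\in A$, $w_iw_i^*\in B$ pairwise orthogonal, and $w_iAw_i^*=B(w_iw_i^*)$ (nonempty by $(2)$), and set $w:=\sum_iw_i$, $p:=w^*w\in A$, $q:=ww^*\in B$; then $wAw^*=Bq$, and by maximality there is no nonzero partial isometry $v$ with $v^*v\leq1-p$ in $A$, $vv^*\leq1-q$ in $B$, $vAv^*=B(vv^*)$. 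We first show $p=1$ or $q=1$. Suppose not. Since $A$ is Cartan in the factor $M$, every nonzero projection of $A$ has central support $1$ in $M$, and consequently $A(1-p)\preceq_MB$ (a standard consequence of $A\preceq_MB$, which holds by $(2)\Rightarrow(1)$). Since $B$ is likewise Cartan in $M$, this intertwining can be realized with range inside $B(1-q)$: conjugate a witnessing partial isometry by a unitary of $\cN_M(B)$ that moves part of its range projection into $1-q$, and cut down. The already proven $(1)\Rightarrow(2)$ then produces a nonzero partial isometry $v$ with $v^*v\leq1-p$ in $A$, $vv^*\leq1-q$ in $B$ and $vAv^*=B(vv^*)$, contradicting maximality. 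Hence, say, $w^*w=1$ and $q=ww^*$, so that $\Ad(w)$ is an isomorphism of $M$ onto $qMq$. If $M$ is finite then $w$, being an isometry, is a unitary, so $q=1$ and $u:=w$ works. If $M$ is not finite it is properly infinite, $q$ is an infinite projection (as $qMq\cong M$), hence $q\sim_M1$; the structure theory of Cartan subalgebras then provides a normalizing partial isometry $s\in M$ with $s^*s=q$, $ss^*=1$, $sBs^*=B$, and $u:=sw$ is a unitary with $u^*u=w^*ww^*w=1$, $uu^*=sqs^*=1$ and $uAu^*=s(Bq)s^*=B$. The case $ww^*=1$, $w^*w\ne1$ is symmetric, using a normalizing partial isometry for $A$ and $u:=w(s')^*$.

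The main obstacle is, throughout, to run Popa's partial-isometry and comparison arguments without a trace: in $(1)\Rightarrow(2)$, the reduction of $n$ to $1$ must rest on the type~\tI\ structure of $\M_n(B)$ through Lemma~\ref{lem.masa-typeI} rather than on a trace on $\M_n(B)$; and in $(2)\Rightarrow(3)$, trace comparisons are replaced by the comparison theory of projections in a type~\tIII\ factor (where every nonzero projection is equivalent to $1$) and by the fact that a Cartan subalgebra of a factor carries enough normalizing partial isometries to match any two of its Murray--von Neumann equivalent projections. With these replacements the architecture of Popa's original proof is unchanged.
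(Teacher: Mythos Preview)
Your implications $(3)\Rightarrow(2)$ and $(2)\Rightarrow(1)$ are fine, and your $(2)\Rightarrow(3)$ under the Cartan/factor hypothesis is a legitimate alternative to the paper's argument (the paper splits into type \tIII\ versus semifinite and builds the unitary directly, whereas you run a Zorn maximal-family argument and split into finite versus properly infinite; both routes rely on the same underlying fact that, for a Cartan subalgebra of a factor, equivalent projections in the Cartan are equivalent via a normalizing partial isometry). Your version is sketchier but the architecture is sound.

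The genuine gap is in $(1)\Rightarrow(2)$, at the sentence ``inspecting the proof of Theorem~\ref{intertwining-general}, we may assume in addition that $v^*v\in\M_n(B)$.'' This is not extractable from that proof: the partial isometry $v$ there arises from a polar decomposition in the standard form of $\M_n(M)$, and one only gets $v^*v\in\pi(A)'\cap q\M_n(M)q$, not $v^*v\in\M_n(B)$. Your entire endgame depends on this---after your reduction to $n=1$ you set $w=v^*$ and assert $ww^*=v^*v\in B$, which then makes $waw^*=\pi(a)v^*v\in B$ immediate. Without $v^*v\in B$ that last containment fails, and the argument stops.

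The paper fills exactly this hole with a different idea. After reducing to $n=1$ (which it does more simply than you, by diagonalizing $\pi$ inside $\M_n(B)$ and picking a nonzero coordinate of $v$), it observes that $v^*v$ is an \emph{abelian} projection in $\pi(A)'\cap qMq$: indeed
\[
v^*v\bigl(\pi(A)'\cap qMq\bigr)v^*v \;=\; v^*\bigl((Avv^*)'\cap vv^*Mvv^*\bigr)v \;=\; v^*Av,
\]
using $vv^*\in A$ and the maximal abelianness of $A$. Since $Bq$ is maximal abelian (with expectation, by restricting $E_B$) in $\pi(A)'\cap qMq$, Lemma~\ref{lem.masa-typeI}(2) produces $u\in\pi(A)'\cap qMq$ with $u^*u=v^*v$ and $uu^*\in B$, and then $w=uv^*$ has all the required properties. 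This abelian-projection step is precisely what your appeal to ``$v^*v\in\M_n(B)$'' was trying to shortcut.
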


\begin{proof}
We first prove that $(1) \Longrightarrow (2)$. Since $A \preceq_M B$, we can take a nonzero partial isometry $v \in \M_{1, n}(M)$, a projection $q \in \M_n(B)$ and a unital $\ast$-homomorphism $\pi : A \recht q \M_n(B) q$ such that $a v = v \pi(a)$ for all $a \in A$. Since $A$ and $B$ are abelian, we may replace $q$ by an equivalent projection in $\M_n(B)$ and assume that $q = \diag(q_1,\ldots,q_n)$ and $\pi(A) \subset \diag(B q_1,\ldots,B q_n)$. Choosing a nonzero component of $v$, we end up with a nonzero partial isometry $v \in M$, a projection $q \in B$ and a unital $\ast$-homomorphism $\pi : A \to Bq$ such that $a v = v \pi(a)$ for all $a \in A$.

We have that $vv^* \in A' \cap M = A$. We also conclude that $v^*v \in \pi(A)' \cap qMq$ is an abelian projection, since
$$v^*v (\pi(A)' \cap q M q) v^*v = v^* ((A vv^*)' \cap vv^* M vv^*) v = v^* A v.$$
By restricting the given conditional expectation $E_B : M \recht B$, we get that $B q \subset \pi(A)' \cap qMq$ is a maximal abelian subalgebra with expectation. Since $v^*v$ is an abelian projection in $\pi(A)' \cap qMq$, Lemma \ref{lem.masa-typeI} provides a partial isometry $u \in \pi(A)' \cap qMq$ such that $u^* u = v^* v$ and $u u^* \in B$. Writing $w = u v^*$, we get that $w^* w = v v^* \in A$, that $w w^* = u u^* \in B$ and $w A w^* \subset B ww^*$. Since $A$ is maximal abelian and $B$ is abelian, we get the equality $w A w^* = B ww^*$.

If moreover $A, B \subset M$ are Cartan subalgebras and $M$ is a factor, we prove that $(2) \Longrightarrow (3)$. If $M$ is a type \tIII\ factor, there exist isometries $w_1, w_2 \in M$ such that $w_1 A w_1^* = A w^*w$ and $w_2 B w_2^* = B ww^*$. Then $u = w_2^* w w_1 \in \cU(M)$ is a unitary such that $u A u^* = B$.
Finally assume that $M$ is a semifinite factor with semifinite faithful normal trace $\Tr$. Choose a nonzero projection $p \in A w^*w$ with $\Tr(p) < \infty$. Write $q = w p w^*$. Choose projections $(p_n)$ in $A$ and $(q_n)$ in $B$ such that $\sum_n p_n = 1 = \sum_n q_n$ and $\Tr(p_n) = \Tr(q_n) \leq \Tr(p)$ for all $n$. We can then take partial isometries $u_n \in M$ such that $u_n u_n^* = p_n$, $u_n^* u_n \in A p$ and $u_n A u_n^* = A p_n$. We next take partial isometries $v_n \in M$ such that $v_n^* v_n = w u_n^* u_n w^*$, $v_n v_n^* = q_n$ and $v_n B v_n^* = B q_n$. One checks that the formula $u = \sum_n v_n w u_n^*$ defines a unitary in $M$ satisfying $u A u^* = B$.
\end{proof}

The following lemma is a straightforward variant of \cite[Corollary F.14]{BO08} and will be used in the proof of Theorem~\ref{thmB}.

\begin{lem}\label{intertwining-abelian}
Let $M$ be a von Neumann algebra with separable predual. Let $A \subset 1_A M 1_A$ and $B \subset M$ be unital von Neumann subalgebras with expectation. Assume that $B$ is abelian and that for every nonzero projection $p \in A$, we have $p A p \npreceq_M B$. Then there exists a diffuse abelian $\ast$-subalgebra $D \subset A$ with expectation such that $D \npreceq_M B$.
\end{lem}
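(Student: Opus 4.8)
By Theorem~\ref{intertwining-general} it is enough to construct $D$ together with a sequence of unitaries $(w_n)_n$ in $\cU(D)$ such that $E_B(x^* w_n y) \recht 0$ $*$-strongly for all $x,y \in 1_A M 1_B$. First observe that $A$ has no minimal projection: if $p \in A$ were one, then, $B$ being unital in $M$, the normal unital $*$-homomorphism $\pi : \C p \recht B$ with $\pi(p) = 1_M$ together with $v := p \in p M 1_B$ satisfies $a v = v \pi(a)$ for all $a \in \C p$, so that $\C p = pAp \preceq_M B$, against the hypothesis. The same argument shows that $eAe$ has no minimal projection for any nonzero projection $e \in A$. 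Fix a faithful normal state $\varphi$ on $M$ with $\varphi \circ E_B = \varphi$ (so $B \subseteq M^\varphi$ and $E_B$ is a $\|\cdot\|_\varphi$-contraction), a faithful normal state $\psi$ on $A$, and a sequence $(x_j)_j$ that is dense in the unit ball of $1_A M$ for the norm $\|\cdot\|^\#_\varphi$.

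The core of the proof is to build inductively an increasing chain $\C 1_A = D_1 \subseteq D_2 \subseteq \cdots$ of finite-dimensional abelian $*$-subalgebras of $A$, alternating two kinds of step. In a \emph{refining step} we split each minimal projection $e$ of $D_n$ as $e = e' + e''$ with $e', e'' \in eAe \setminus\{0\}$ and $\psi(e'),\psi(e'') \le \tfrac23\psi(e)$; this is possible because $eAe$ has no minimal projection, hence $e$ is a sum of pairwise orthogonal $\psi$-small projections of $eAe$, a suitable partial sum of which lies within $\tfrac13\psi(e)$ of $\tfrac12\psi(e)$. We let $D_{n+1}$ be generated by $D_n$ and the new projections. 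In a \emph{spreading step} we use, for each minimal projection $e$ of $D_n$, that $eAe \npreceq_M B$, together with Theorem~\ref{intertwining-general}, to pick a unitary $w_e \in \cU(eAe)$ — of finite spectrum, after a norm-small perturbation — such that $\|E_B((e x_i)^* w_e (e x_j))\|_\varphi < 2^{-n}/m_n$ for all $i,j \le n$, where $m_n$ is the number of minimal projections of $D_n$; we set $w^{(n)} := \sum_e w_e \in \cU(A)$, which commutes with $D_n$ since each $w_e$ lies in $eAe$, and put $D_{n+1} := (D_n \cup \{w^{(n)}\})''$, again finite-dimensional and abelian. Since $E_B(x_i^* w^{(n)} x_j) = \sum_e E_B((e x_i)^* w_e (e x_j))$, we obtain $\|E_B(x_i^* w^{(n)} x_j)\|_\varphi < 2^{-n}$ for all $i,j \le n$.

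Put $D := \overline{\bigcup_n D_n}$. As refining steps occur infinitely often, the $\psi$-value of the minimal projections decays geometrically along every branch of the tree, so $D$ has no atom and is a diffuse abelian $*$-subalgebra of $A$. The unitaries $w^{(n)}$ lie in $\cU(D_{n+1}) \subseteq \cU(D)$ and $E_B(x_i^* w^{(n)} x_j) \recht 0$ for all $i,j$; a standard approximation argument (using that $E_B$ is a $\|\cdot\|_\varphi$-contraction and that $(x_j)_j$ is $\|\cdot\|^\#_\varphi$-dense) yields $E_B(x^* w^{(n)} y) \recht 0$ $*$-strongly for all $x,y \in 1_A M 1_B$, so $D \npreceq_M B$ by Theorem~\ref{intertwining-general}.

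The remaining and genuinely delicate point is that $D$ must be \emph{with expectation} — an abelian von Neumann subalgebra need not admit a conditional expectation. The remedy is to run the entire construction inside the centralizer $A^\psi$: if all the projections $e',e''$ and all the unitaries $w^{(n)}$ lie in $A^\psi$, then $D \subseteq A^\psi$, and since $A^\psi$ is pointwise fixed by the modular group $\sigma^\psi$, Takesaki's theorem provides a faithful normal conditional expectation $A \recht A^\psi$, hence one onto $D$; composing with $E_A$ exhibits $D \subseteq M$ as with expectation. For the two steps to stay inside $A^\psi$ one needs, for each of the finitely many minimal projections $e$ of $D_n$, that $eA^\psi e$ has no minimal projection (clear) and, crucially, that $eA^\psi e \npreceq_M B$. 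This last property does not follow formally from $eAe \npreceq_M B$, and ensuring that $\psi$ can be chosen so that its centralizer still witnesses the non-intertwining hypothesis — via a direct-integral reduction on $A$ together with the interplay between the modular flows of $\psi$ and $\varphi$ — is the main obstacle of the argument.
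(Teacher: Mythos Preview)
Your proposal is incomplete: you yourself flag the ``main obstacle'' in the last paragraph and do not resolve it. The construction of the increasing chain of finite-dimensional abelian subalgebras is fine as long as it is carried out in $A$, but then $D$ need not be with expectation. Your remedy --- running the construction inside a centralizer $A^\psi$ --- requires two things at every stage: that $eA^\psi e$ be diffuse, and that $eA^\psi e \npreceq_M B$. Neither follows from the hypothesis $eAe \npreceq_M B$ for a general faithful normal state $\psi$ (and the first is certainly not ``clear''); the vague allusion to ``direct-integral reduction'' and ``interplay between modular flows'' is not an argument.

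The paper closes exactly this gap, and the key idea is a type decomposition. Write $z \in \cZ(A)$ for the projection with $Az$ of type \tI\ and $N := A(1_A - z)$ without type \tI\ summand. On the type \tI\ part one takes a maximal abelian subalgebra $D_0 \subset Az$ with expectation; using abelian projections $e_n \in D_0$ with $D_0 e_n = e_n (Az) e_n$ and the hypothesis $e_n A e_n \npreceq_M B$, one gets $D_0 \npreceq_M B$ directly, with no inductive construction needed. On the non-type-\tI\ part the decisive input is the theorem of Haagerup--St{\o}rmer \cite{haagerup-stormer}: one may choose a faithful normal state $\varphi$ on $N$ whose centralizer $N^\varphi$ is of type \tIIone. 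Now your inductive construction runs inside $N^\varphi$, and the obstacle evaporates: for every projection $p_k \in N^\varphi$ the corner $p_k N^\varphi p_k$ is again of type \tIIone, hence admits no nonzero normal $*$-homomorphism into any $\M_n(B)$ with $B$ abelian (type \tI), so $p_k N^\varphi p_k \npreceq_M B$ holds automatically --- one never needs to deduce it from $p_k A p_k \npreceq_M B$. Every subalgebra of $N^\varphi$ is with expectation in $N$ by Takesaki's theorem, and the resulting $D_1 \subset N^\varphi$ is diffuse and satisfies $D_1 \npreceq_M B$. Then $D = D_0 \oplus D_1$ works.
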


\begin{proof}
Let $z \in \cZ(A)$ be the unique projection such that $Az$ is of type \tI\ and $A(1_A - z)$ has no type \tI\ direct summand. Fix a unital maximal abelian $\ast$-subalgebra with expectation $D_0 \subset A z$ and a sequence of orthogonal projections $e_n \in D_0$ such that $D_0 e_n = e_n (Az) e_n$ for every $n$ and $\sum_n e_n = z$. Since $e_n A e_n \npreceq_M B$, we get that $D_0 \npreceq_M B$.

Denote $N = A(1_A - z)$. By \cite[Theorem 11.1]{haagerup-stormer}, we can choose a faithful normal state $\varphi$ on $N$ such that the centralizer $N^\varphi$ is of type \tIIone. By \cite[Theorem IX.4.2]{takesakiII}, every von Neumann subalgebra of $N^\varphi$ is with expectation as a subalgebra of $N$.
Following the lines of \cite[Corollary F.14]{BO08}, we construct a unital diffuse abelian $\ast$-subalgebra $D_1 \subset N^\vphi$ such that $D_1 \npreceq_M B$.

Fix a faithful normal trace $\tau$ on $B$ and let $\psi = \tau \circ E_B$ be the corresponding faithful normal state on $M$. Denote by $\|\cdot\|_2$ the $\rL^2$-norm associated with the state $\psi$ on $M$. Fix a countable subset $\{x_i : i \in \N\} \subset (1_A-z)M$ that is dense in $(1_A-z)M$ with respect to the norm $\|\cdot\|_2$.

By induction, we construct an increasing sequence $(Q_n)$ of unital abelian finite dimensional $\ast$-subalgebras of $N^\varphi$ and unitaries $w_n \in \cU(Q_n)$ such that $\|E_B(x_i^* w_n x_j)\|_2 < n^{-1}$ for all $1 \leq i, j \leq n$. Let $Q_1 = \C (1_A - z)$ and assume that $Q_1, \dots, Q_n$ and $w_1, \dots, w_n$ have been constructed. Denote by $(p_k)_{1 \leq k \leq \dim Q_n}$ the minimal projections of $Q_n$. Since $p_k N^\varphi p_k$ is still of type \tIIone, we have that $p_k N^\varphi p_k \npreceq_M B$ and therefore we can find a unitary $v_k \in p_k N^\varphi p_k$ such that $\|E_B(x^*_i v_k x_j)\|_2 < ((n + 1) \dim Q_n)^{-1}$ for all $1 \leq i, j \leq n + 1$. By the spectral theorem, we can uniformly approximate $v_k$ by a unitary in $p_k N^\varphi p_k$ with finite spectrum. So, we may assume that $v_k$ has finite spectrum. We define $w_{n + 1} = \sum_k v_k$ and define $Q_{n + 1}$ as the abelian $\ast$-algebra generated by $Q_n$ and $w_{n + 1}$. By construction, $Q_{n+1}$ and $w_{n+1}$ satisfy the required properties.

By construction, we have for all $i,j \in \N$ that $\lim_n \|E_B(x_i^* w_n x_j)\|_2 = 0$. Since the $(x_i)_{i \in \N}$ span a $\|\cdot\|_2$-dense subset of $(1_A-z)M$ and since $E_B$ is contractive in $\|\cdot\|_2$-norm, we get that $\lim_n \|E_B(x^* w_n y)\|_2 = 0$ for all $x, y \in (1_A-z)M$. This shows that $D_1 := \bigvee_n Q_n$ satisfies $D_1 \npreceq_M B$. Therefore $D := D_0 \oplus D_1 \subset 1_A M 1_A$ is a unital diffuse abelian $\ast$-subalgebra with expectation such that $D \npreceq_M B$.
\end{proof}

We finally prove the following semifinite version of \cite[Proposition 2.6]{vaes-cohomology}.

\begin{lem}\label{lem.intertwine-entirely}
Let $(\cB,\Tr)$ be a von Neumann algebra with a normal semifinite faithful trace. Let $\Gamma$ be a countable group, $\Gamma \actson \cB$ a trace preserving action and $q \in \cB$ a finite trace projection. Put $\cM := q(\cB \rtimes \Gamma) q$ and let $p \in \cM$ be a nonzero projection. Assume that $\cA \subset p \cM p$ is a von Neumann subalgebra with normalizer $\cP := \cN_{p\cM p}(\cA)\dpr$.

For every finite subset $\cF \subset \Gamma$, we denote by $P_\cF$ the orthogonal projection of $\rL^2(\cM,\Tr)$ onto the closed linear span of $\{q b u_g q \mid b \in \cB , g \in \cF\}$. We denote by $\|\,\cdot\,\|_2$ the $2$-norm on $\rL^2(\cM,\Tr)$.
\begin{enumerate}
\item The set of projections $\cJ := \{e \in \cA' \cap p \cM p \mid \cA e \npreceq_{\cM} q \cB q\}$ is directed and attains its maximum in a projection $z$ that belongs to $\cZ(\cP)$.
\item There exists a sequence of unitaries $(w_k)$ in $\cU(\cA z)$ such that $\lim_k \|P_\cF(w_k)\|_2 = 0$ for all finite subsets $\cF \subset \Gamma$.
\item For every $\eps > 0$, there exists a finite subset $\cF \subset \Gamma$ such that $\|a - P_\cF(a)\|_2 < \eps$ for all $a$ in the unit ball of $\cA (p-z)$.
\end{enumerate}
\end{lem}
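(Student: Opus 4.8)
The plan is to prove the three statements more or less in the order they are listed, following the template of \cite[Proposition 2.6]{vaes-cohomology} but keeping careful track of the semifinite trace $\Tr$ and the fact that $q\cB q$ is only a finite-trace corner of the semifinite algebra $\cB$.

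For (1), first I would check that $\cJ$ is directed: if $e_1,e_2\in\cJ$, I want $e_1\vee e_2\in\cJ$. Since $q\cB q$ is finite and with expectation (the trace-preserving conditional expectation $E_{\cB}:\cB\rtimes\Gamma\to\cB$ restricts appropriately after cutting down by $q$), the relation $\preceq_\cM$ is governed by Theorem \ref{intertwining-general}. The key point is the standard fact that for $e_1,e_2\in\cA'\cap p\cM p$ with $\cA e_i\npreceq q\cB q$, one has $\cA(e_1\vee e_2)\npreceq q\cB q$; this follows because $e_1\vee e_2$ is a finite sum/limit of subprojections, each equivalent inside $\cA'\cap p\cM p$ to a subprojection of $e_1$ or $e_2$, so a net of unitaries in $\cU(\cA)$ witnessing non-intertwining of $\cA e_i$ can be assembled to witness it for $\cA(e_1\vee e_2)$. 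Directedness plus a maximality/separability argument (take a countable increasing cofinal sequence $e_n\nearrow z$) gives that the supremum $z$ again lies in $\cJ$ — here one uses that $\preceq$ passes to limits: if $\cA z\preceq q\cB q$ then $\cA e_n\preceq q\cB q$ for $n$ large, contradiction. Finally $z$ is normalizer-invariant: for $u\in\cN_{p\cM p}(\cA)$, conjugation by $u$ preserves $\cA'\cap p\cM p$ and preserves the property $\npreceq q\cB q$ (because $\Gamma\actson\cB$ is a trace-preserving action, so $u q\cB q u^*$ is again controlled by $q\cB q$ up to the group action — more precisely $\cA e\preceq q\cB q \Leftrightarrow \cA (ueu^*)\preceq q\cB q$ since $u\cM u^* = \cM$ and $q\cB q$ is, up to unitary conjugacy by elements of $\cM$, stable), whence $uzu^*=z$; by maximality $uzu^*=z$, so $z\in\cZ(\cP)$.

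For (2), I apply Theorem \ref{intertwining-general} to $\cA z\subset z\cM z$ and $q\cB q$: since $z\in\cJ$, $\cA z\npreceq_\cM q\cB q$, so there is a net of unitaries $(w_i)$ in $\cU(\cA z)$ with $E_{q\cB q}(x^* w_i y)\to 0$ $*$-strongly for all $x,y\in z\cM q$. The passage from ``$*$-strong'' to ``$\|\cdot\|_2$'' uses that $q$ has finite trace, so on the finite-trace corner $q\cM q$ these topologies agree on bounded sets; and $P_\cF(w_k)\to 0$ in $\|\cdot\|_2$ follows by writing $P_\cF(w) = \sum_{g\in\cF} E_{\cB}(w u_g^*) q u_g$ and bounding each term by $\|E_{q\cB q}(q w u_g^* q)\|_2$-type quantities. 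A diagonal argument over an exhausting sequence of finite sets $\cF$ and over a countable $\|\cdot\|_2$-dense subset of $z\cM q$ extracts an honest sequence $(w_k)$ with $\lim_k\|P_\cF(w_k)\|_2=0$ for every finite $\cF$.

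For (3), this is the ``entire intertwining'' half: on the complementary projection $p-z$, maximality of $z$ means that for \emph{every} nonzero subprojection $e\le p-z$ in $\cA'\cap p\cM p$ we have $\cA e\preceq q\cB q$. I would run the standard argument (as in \cite{vaes-cohomology}): suppose (3) fails, so there is $\eps>0$ and a net $a_i$ in the unit ball of $\cA(p-z)$ with $\|a_i - P_{\cF}(a_i)\|_2\ge\eps$ for all finite $\cF$; feed this into a maximality-of-projection argument to produce a nonzero $e\le p-z$ in $\cA'\cap p\cM p$ with $\cA e\npreceq q\cB q$, contradicting maximality of $z$. Concretely: let $e_0\in\cA'\cap p\cM p$ be the maximal projection $\le p-z$ such that $\cA e_0$ does satisfy the conclusion of (3) (such a maximal projection exists by a Zorn/exhaustion argument because the property is closed under suprema, using that the estimate is uniform on the unit ball); if $e_0\ne p-z$, then on $(p-z)-e_0$ one shows, via the negation of (3) and Theorem \ref{intertwining-general}, that $\cA((p-z)-e_0)\npreceq q\cB q$, contradicting that $z$ is the maximum of $\cJ$. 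Hence $e_0 = p-z$ and (3) holds.

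\textbf{Main obstacle.} The delicate point is bookkeeping around the semifinite, non-finite algebra $\cB$: all the intertwining machinery of Theorem \ref{intertwining-general} requires the target $q\cB q$ to be finite and with expectation, which is why one cuts by the finite-trace projection $q$, but then one must be careful that the normalizer $\cP$, the projections in $\cA'\cap p\cM p$, and the conditional expectations all live in the finite-trace corner $q(\cB\rtimes\Gamma)q$ where $*$-strong and $\|\cdot\|_2$ topologies are comparable. The other somewhat technical step is establishing that $\cJ$ is directed and closed under countable suprema with the limit still in $\cJ$ — i.e. that $\npreceq_\cM q\cB q$ is preserved under joins — which is where the explicit assembly of witnessing unitary nets is needed, and where one must invoke that $\Gamma\actson\cB$ is trace-preserving so that conjugates of $q\cB q$ by elements of $\cM$ do not escape the relevant corner.
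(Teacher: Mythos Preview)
Your outline for (1) is fine and essentially matches the paper (which argues contrapositively via the intertwining partial isometry rather than assembling unitary nets, but either works). Your outline for (3) is by contradiction whereas the paper is constructive --- it covers $p-z$ up to $\eps/3$ by finitely many orthogonal projections $e_1,\ldots,e_k$ each carrying explicit intertwining data $(v_i,\vphi_i)$, assembles them into a single $v$, approximates $v$ by an element $v_0$ supported on a finite set $\cG\subset\Gamma$, and takes $\cF=\cG\cG^{-1}$. Your contradiction route could be made to work but the step ``negation of (3) on $(p-z)-e_0$ implies $\cA((p-z)-e_0)\npreceq q\cB q$'' needs a real argument (the negation only gives a sequence in the unit ball, not unitaries), and the claim that ``satisfying (3)'' is closed under suprema is not automatic.

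The real gap is in (2). You propose to write $P_\cF(w)=\sum_{g\in\cF}E_\cB(wu_g^*)\,qu_g$ and then bound each term by ``$\|E_{q\cB q}(qwu_g^* q)\|_2$-type quantities'' coming from Theorem~\ref{intertwining-general}. This does not work as stated: $u_g$ does not commute with $q$, so $wu_g^*\in q\cN\sigma_g(q)$ and $E_\cB(wu_g^*)\in q\cB\sigma_g(q)$, which is \emph{not} captured by testing against elements of $\cM=q\cN q$. Indeed $qu_g^* q$ can be zero (whenever $q$ and $\sigma_{g^{-1}}(q)$ are orthogonal), in which case $E_{q\cB q}(w\cdot qu_g^* q)=0$ tells you nothing about $E_\cB(wu_g^*)$. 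The paper handles this by enlarging the corner: for a fixed finite $\cF$ one sets $r=\bigl(q\vee\bigvee_{g\in\cF}\sigma_g(q)\bigr)z_1$ (with $z_1$ the central support of $q$ in $\cB$), proves the nontrivial claim that $\cA z\npreceq_{r\cN r} r\cB r$ for every such finite-trace $r$ with $q\le r\le z_1$ (by transferring a hypothetical intertwiner back to $q\cB q$ via partial isometries in $\cB$), and then observes that $qu_g^* z_1\in r\cN r$ so that $E_\cB(wu_g^*)=E_{r\cB r}(w\cdot qu_g^* z_1)$ is now genuinely controlled. This enlargement step, and the persistence of non-intertwining under it, is the technical heart of (2); your proposal does not contain it.
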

\begin{proof}
(1) Assume that $(e_i)_{i \in I}$ is a family of projections in $\cA' \cap p \cM p$ and put $e= \vee_{i \in I} e_i$. To prove that $\cJ$ is directed and attains its maximum, we must show that if $e \not\in \cJ$, then $e_i \not\in \cJ$ for one of the $i \in I$. If $e \not\in \cJ$, we have $\cA e \preceq_\cM q \cB q$ and we find a nonzero partial isometry $v \in \M_{1,n}(\C) \ot e \cM$ and a normal $*$-homomorphism $\vphi : \cA \recht \M_{n}(\C) \ot q \cB q$ such that $a v = v \vphi(a)$ for all $a \in \cA$. Take $i \in I$ such that $e_i v \neq 0$ and denote by $w \in \M_{1,n}(\C) \ot e_i \cM$ the polar part of $e_i v$. Since $a w = w \vphi(a)$ for all $a \in \cA$, it follows that $\cA e_i \preceq_\cM q \cB q$ and hence $e_i \not\in \cJ$.

Denote by $z \in \cJ$ the maximum of $\cJ$. If $e \in \cJ$ and $u \in \cN_{p \cM p}(\cA)$, it is immediate that $u e u^* \in \cJ$. So, $z$ commutes with $\cN_{p\cM p}(\cA)$ and it follows that $z \in \cZ(\cP)$.

(2) By construction $\cA z \npreceq_\cM q \cB q$. Put $\cN = \cB \rtimes \Gamma$. Denote by $z_1 \in \cZ(\cB)$ the central support of the projection $q \in \cB$. We claim that $\cA z \npreceq_{r \cN r} r \cB r$ whenever $r \in \cB$ is a finite trace projection with $q \leq r \leq z_1$. Assume the contrary and take a nonzero partial isometry $v \in \M_{1,n}(\C) \ot z \cN r$ and a normal $*$-homomorphism $\vphi : \cA z \recht \M_n(\C) \ot r \cB r$ such that $a v = v \vphi(a)$ for all $a \in \cA z$.

Since $r \leq z_1$ and since $z_1$ is the central support of $q$ in $\cB$, we can take a central projection $z_2 \in \cZ(\cB)$ such that $r z_2$ is arbitrarily close to $r$ and $r z_2 = r_1 + \cdots + r_m$ with all the $r_i$ being projections in $\cB$ that are subequivalent with $q$. Since $v = v (1 \ot r)$, we choose $z_2$ such that $v(1 \ot z_2) \neq 0$. We take a partial isometry $w \in \M_{1,m}(\C) \ot \cB$ such that $ww^* = r z_2$ and $w^* w \leq 1 \ot q$. Define $\psi : \cA z \recht \M_{nm}(\C) \ot q \cB q$ given by $\psi(a) = (1 \ot w^*) \vphi(a) (1 \ot w)$. Since $v (1 \ot w) \neq 0$ and $a v(1 \ot w) = v(1 \ot w) \psi(a)$ for all $a \in \cA z$, it follows that $\cA z \preceq_{\cM} q \cB q$, contrary to our choice of $z$. This proves the claim.

Choose a finite subset $\cF \subset \Gamma$ and choose $\eps > 0$. To prove (2), it suffices to construct a unitary $w \in \cU(\cA z)$ such that $\|P_\cF(w)\|_2 < \eps$. We still denote by $z_1 \in \cZ(\cB)$ the central support of the projection $q \in \cB$. Define the finite trace projection $r \in \cB$ given by
$$r = \Bigl(q \vee \bigvee_{g \in \cF} \si_g(q)\Bigr) z_1 \; .$$
By the claim above, $\cA z \npreceq_{r \cN r} r \cB r$. For every $g \in \cF$, the element $q u_g^* z_1$ belongs to $r \cN r$. So we can find a unitary $w \in \cU(\cA z)$ such that
$$\bigl\| E_{r \cB r} (w \, q u_g^* z_1) \bigr\|_2 < \frac{\eps}{|\cF|} \quad\text{for all}\;\; g \in \cF \; .$$
Denote by $E_\cB : \cN \recht \cB$ the canonical conditional expectation. Note that for all $x \in \cM = q \cN q$, we have $x = z_1 x q$, so that for all $g \in \cF$,
$$E_{\cB}(x u_g^*) = E_{\cB}(z_1 x q \, u_g^*) = z_1 \, E_\cB(x q u_g^*) = E_\cB(x q u_g^*) \, z_1 = E_\cB(x \, q u_g^* z_1) = E_{r \cB r}(x \, q u_g^* z_1) \; .$$
Since for all $x \in \cM$, we have
$$P_\cF(x) = \sum_{g \in \cF} E_\cB(x u_g^*) \, u_g \; ,$$
we get that
$$\|P_\cF(w)\|_2 \leq \sum_{g \in \cF} \|E_\cB(w u_g^*)\|_2 = \sum_{g \in \cF} \|E_{r \cB r}(w \, q u_g^* z_1)\|_2 < \eps \; .$$
This concludes the proof of (2).

(3) By construction, we have $\cA e \preceq_\cM q \cB q$ for every nonzero projection $e \in \cA' \cap p \cM p$ satisfying $e \leq p-z$. Choose $\eps > 0$. Take an orthogonal family of nonzero projections $e_1,\ldots,e_k \in \cA' \cap p \cM p$ such that
\begin{itemize}
\item $e_i \leq p-z$ for every $i=1,\ldots,k$ and $e = e_1 + \cdots + e_k$ satisfies $\|(p-z) - e\|_2 < \eps / 3$~;
\item for every $i =1,\ldots,k$, there exists a partial isometry $v_i \in \M_{1,n_i}(\C) \ot \cM$ and a normal $*$-homomorphism $\vphi_i : \cA \recht \M_{n_i}(\C) \ot q \cB q$ such that $v_i v_i^* = e_i$ and $a v_i = v_i \vphi_i(a)$ for all $a \in \cA$.
\end{itemize}
Put $n = n_1+\cdots+n_k$ and define $\vphi : \cA \recht \M_n(\C) \ot q\cB q$ by putting together the $\vphi_i$ diagonally. Similarly define the partial isometry $v \in \M_{1,n}(\C) \ot \cM$ such that $vv^* = e$ and $a v = v \vphi(a)$ for all $a \in \cA$. By the Kaplansky density theorem, choose $v_0 \in \M_{1,n}(\C) \ot q(\cB \rtimes_\alg \Gamma)q$ such that $\|v_0\| \leq 1$ and $\|v - v_0\|_2 < \eps / 3$. Define the finite subset $\cG \subset \Gamma$ such that $v_0$ belongs to the linear span of $\{e_{1i} \ot q b u_g q \mid i=1,\ldots,n, b \in \cB , g \in \cG\}$. Put $\cF = \cG \cG^{-1}$.

Take $a$ in the unit ball of $\cA (p-z)$. We must prove that $\|a - P_\cF(a)\|_2 < \eps$. Write $a = a(p-z-e) + ae$ and note that $\|a (p-z-e)\|_2 \leq \|p-z-e\|_2 < \eps/3$. Since $ae = v \vphi(a) v^*$, it follows that $ae$ lies at distance less than $2\eps/3$ of $v_0 \vphi(a) v_0^*$. The latter belongs to the image of $P_\cF$. So, $a$ lies at distance less than $\eps$ of an element in the image of $P_\cF$, i.e.\ $\|a - P_\cF(a)\|_2 < \eps$.
\end{proof}

\subsection{Connes-Takesaki's noncommutative flow of weights}\label{flow}

Let $(M, \varphi)$ be a von Neumann algebra together with a faithful normal state. Denote by $M^\varphi$ the centralizer of $\varphi$ and by $M \rtimes_\varphi \R$ the {\it continuous core} of $M$, i.e.\ the crossed product of $M$ with the modular automorphism group $(\sigma_t^\vphi)_{t \in \R}$ associated with the faithful normal state $\varphi$. We have a canonical embedding $\pi_\vphi : M \recht M \rtimes_\vphi \R$ and a canonical group of unitaries $(\lambda_\vphi(s))_{s \in \R}$ in $M \rtimes_\vphi \R$ such that
$$\pi_\vphi(\si^\vphi_s(x)) = \lambda_\vphi(s) \, \pi_\vphi(x) \, \lambda_\vphi(s)^* \quad\text{for all}\;\; x \in M, s \in \R \; .$$
The unitaries $(\lambda_\vphi(s))_{s \in \R}$ generate a copy of $\rL(\R)$ inside $M \rtimes_\vphi \R$.

We denote by $\vphih$ the \emph{dual weight} on $M \rtimes_\vphi \R$ (see \cite[Definition X.1.16]{takesakiII}), which is a normal semifinite faithful weight on $M \rtimes_\vphi \R$ whose modular automorphism group $(\sigma_t^{\vphih})_{t \in \R}$ satisfies
$$
\sigma_t^{\widehat{\varphi}}(\pi_\varphi(x)) = \pi_\varphi(\sigma_t^\varphi(x)) \;\;\text{for all}\;\; x \in M \quad\text{and}\quad
\sigma_t^{\widehat{\varphi}}(\lambda_\varphi(s)) = \lambda_\varphi(s) \;\;\text{for all}\;\; s \in \R \;  .
$$
We denote by $(\theta^\varphi_t)_{t \in \R}$ the \emph{dual action} on $M \rtimes_\vphi \R$, given by
$$\theta^\vphi_t(\pi_\vphi(x)) = \pi_\vphi(x) \;\;\text{for all}\;\; x \in M \quad\text{and}\quad \theta^\vphi_t(\lambda_\vphi(s)) = \exp(i t s) \lambda_\vphi(s) \;\;\text{for all}\;\; s \in \R \; .$$
Denote by $h_\varphi$ the unique nonsingular positive selfadjoint operator affiliated with $\rL(\R) \subset M \rtimes_\vphi \R$ such that $h_\varphi^{is} = \lambda_\varphi(s)$ for all $s \in \R$. Then $\Tr_\varphi := \widehat{\varphi}(h_\varphi^{-1} \cdot)$ is a semifinite faithful normal trace on $M \rtimes_\varphi \R$ and the dual action $\theta^\varphi$ {\it scales} the trace $\Tr_\varphi$:
\begin{equation*}
\Tr_\varphi \circ \theta^\varphi_t = \exp(t) \Tr_\varphi \quad\text{for all}\;\; t \in \R \; .
\end{equation*}
Note that $\Tr_\vphi$ is semifinite on $\rL(\R) \subset M \rtimes_\vphi \R$. Moreover, the canonical faithful normal conditional expectation $E_{\rL(\R)} : M \rtimes_\varphi \R \to \rL(\R)$ defined by $E_{\rL(\R)}(x \lambda_\varphi(s)) = \varphi(x) \lambda_\varphi(s)$ preserves the trace $\Tr_\varphi$, that is,
\begin{equation*}
\Tr_\varphi \circ E_{\rL(\R)} = \Tr_\varphi \; .
\end{equation*}

Because of Connes's Radon-Nikodym cocycle theorem (see \cite[Theorem VIII.3.3]{takesakiII}), the semifinite von Neumann algebra $M \rtimes_\vphi \R$, together with its trace $\Tr_\vphi$ and trace-scaling action $\theta^\vphi$, ``does not depend'' on the choice of $\vphi$ in the following precise sense. If $\psi$ is another faithful normal state on $M$, there is a canonical surjective $*$-homomorphism
$\Pi_{\psi,\vphi} : M \rtimes_\vphi \R \recht M \rtimes_\psi \R$ such that $\Pi_{\psi,\vphi} \circ \pi_\vphi = \pi_\psi$, $\Tr_\psi \circ \Pi_{\psi,\vphi} = \Tr_\vphi$ and $\Pi_{\psi,\vphi} \circ \theta^\vphi_t = \theta^\psi_t \circ \Pi_{\psi,\vphi}$. Note however that $\Pi_{\psi,\vphi}$ does not map the subalgebra $\rL(\R) \subset M \rtimes_\vphi \R$ onto the subalgebra $\rL(\R) \subset M \rtimes_\psi \R$.

Altogether we can abstractly consider the \emph{continuous core} $(\core(M),\theta,\Tr)$, where $\core(M)$ is a von Neumann algebra with a faithful normal semifinite trace $\Tr$, $\theta$ is a trace-scaling action of $\R$ on $(\core(M),\Tr)$ and $\core(M)$ contains a copy of $M$. Whenever $\vphi$ is a faithful normal state on $M$, we get a canonical surjective $\ast$-homomorphism $\Pi_\vphi : M \rtimes_\vphi \R \recht \core(M)$ such that
$$\Pi_{\varphi} \circ \theta^\varphi = \theta \circ \Pi_{\varphi} \;\; ,\quad \Tr_\varphi = \Tr \circ \Pi_{\varphi} \;\; , \quad \Pi_{\varphi}(\pi_\varphi(x)) = x \;\; \forall x \in M \; .$$
A more functorial construction of the continuous core, known as the {\it noncommutative flow of weights} can be given, see \cite{connes73,connestak,falcone}.

By Takesaki's duality theorem \cite[Theorem X.2.3]{takesakiII}, we have that $\core(M) \rtimes_{\theta} \R \cong M \ovt \B(\rL^2(\R))$. In particular, $M$ is amenable if and only if $\core(M)$ is amenable.

Whenever $\Gamma \curvearrowright (X, \mu)$ is a nonsingular action on a standard measure space, define the \emph{Maharam extension} (see \cite{maharam}) $\Gamma \curvearrowright (X \times \R, m)$ by
$$g \cdot (x, t) = \left( gx, t + \log\left( \frac{{\rm d} \mu \circ g^{-1}}{{\rm d} \mu}(x)\right)\right),$$
where ${\rm d}m = {\rm d} \mu \times \exp(t){\rm d}t$. It is easy to see that the action $\Gamma \curvearrowright X \times \R$ preserves the infinite measure $m$ and that we moreover have that
$$\core(\rL^\infty(X) \rtimes \Gamma) = \rL^\infty(X \times \R) \rtimes \Gamma.$$
If $P \subset 1_P M 1_P$ is a von Neumann subalgebra with expectation, we have a canonical trace preserving inclusion $\core(P) \subset 1_P \core(M) 1_P$.

\section{Proof of Theorem~\ref{thmA} and Corollary~\ref{corA}}

We start by proving Theorem \ref{thmA} in the infinite measure preserving case. More precisely, we deduce the following result from its finite measure preserving counterpart proven in
\cite[Theorem 3.1]{PV12}.

\begin{theo}\label{newkey}
Let $\Gamma_1,\ldots,\Gamma_n$ be weakly amenable groups in the class $\QHreg$ and put $\Gamma = \Gamma_1 \times \cdots \times \Gamma_n$. Let $(\cB,\Tr)$ be an amenable von Neumann algebra equipped with a normal semifinite faithful trace $\Tr$. Assume that $q \in \cB$ is a finite trace projection and put $\cM = q(\cB \rtimes \Gamma)q$. Assume that $\cA \subset \cM$ is a von Neumann subalgebra such that $\cA$ is amenable and $\cA \subset \cM$ is regular, i.e.\ $\cN_\cM(\cA)\dpr = \cM$. Then at least one of the following statements holds.
\begin{itemize}
\item There exists an $i \in \{1,\ldots,n\}$ such that $\cB \rtimes \Gamma_i$ has an amenable direct summand.
\item For every nonzero projection $p \in \cA' \cap \cM$, we have $\cA p \preceq_{\cM} q \cB q$.
\end{itemize}
\end{theo}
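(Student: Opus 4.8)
The plan is to reduce this infinite-measure statement to the finite-measure Theorem 3.1 of \cite{PV12} by an amplification/cutting-down argument. First I would dilate: since $q \in \cB$ is a finite trace projection, choose a finite trace projection $q_1 \in \cB$ with $q \le q_1$ and with central support $z_1 = $ central support of $q$ in $\cB$, after first cutting by $z_1$ (the action $\Gamma \actson \cB$ need not preserve $z_1$, so one actually works inside $\cB \rtimes \Gamma$ and keeps track of the relevant corners; this is exactly the bookkeeping done in Lemma \ref{lem.intertwine-entirely}). The point is to arrange that a suitable corner $p_0(\cB \rtimes \Gamma)p_0$ with $p_0 = 1 \ot q_1' $ (a projection in $\cB$, or rather in a matrix amplification $\M_N(\C) \ot \cB$ with $\Tr_{\cB} \le N$ on the relevant support) becomes a \emph{finite} von Neumann algebra. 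Concretely, amplify: replace $\cB$ by $\M_N(\C) \ovt \cB$ for suitable $N$ so that $q$ is subequivalent inside $\cB$ to a projection of normalized trace, making $q_1$ with $\Tr(q_1) = 1$; then $q_1(\M_N(\C)\ovt\cB \rtimes \Gamma)q_1$ is a \emph{tracial} von Neumann algebra with trace $\Tr/\Tr(q_1)$, it is of the form $A_0 \rtimes \Gamma$ for a trace-preserving action of $\Gamma$ on the finite amenable von Neumann algebra $A_0 = q_1(\M_N(\C)\ovt\cB)q_1$, and in fact (after disintegration) on a probability space, so Theorem 3.1 of \cite{PV12} applies verbatim.

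Next I would transport the hypotheses. Inside $\cM = q(\cB\rtimes\Gamma)q$ we have the amenable regular subalgebra $\cA$. Passing to $\cM_1 := q_1(\M_N(\C)\ovt\cB \rtimes \Gamma)q_1$ via the inclusion $\cM \subset \cM_1$ (unital onto a corner), $\cA \subset \cM$ sits inside $\cM_1$ but is no longer regular there; so I would instead consider the subalgebra $\widetilde\cA$ generated by $\cA$ together with $q_1(\M_N(\C)\ovt\cB)q_1 \cap \cA'$, or more cleanly, use the standard fact that if $\cN_{q\cM q}(\cA)'' = q\cM q$ then the basic construction / amplification produces a regular amenable subalgebra of the tracial $\cM_1$ whose intertwining behaviour controls that of $\cA$. (This is exactly the kind of ``regular + amenable is preserved under taking corners and commutants with abelian pieces'' lemma; here amenability of $\cB$ is used, since $q_1(\M_N(\C)\ovt\cB)q_1$ is amenable and regular in $\cM_1$ forces the conclusion of \cite[Theorem 3.1]{PV12} to be about intertwining $q_1(\M_N(\C)\ovt\cB)q_1$, which is what we want.) Applying \cite[Theorem 3.1]{PV12} to $\cM_1$: either some $\cB\rtimes\Gamma_i$, after amplification and cutting, has an amenable direct summand --- equivalently (since amplifying and cutting by a nonzero projection with full central support preserves amenability of direct summands) $\cB\rtimes\Gamma_i$ has an amenable direct summand, which is the first alternative; or $\cA p \preceq_{\cM_1} q_1(\M_N(\C)\ovt\cB)q_1$ for every nonzero $p \in \cA'\cap\cM$.

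Finally I would descend from $\cM_1$ back to $\cM$, i.e.\ upgrade $\cA p \preceq_{\cM_1} q_1(\M_N(\C)\ovt\cB)q_1$ to $\cA p \preceq_{\cM} q\cB q$. This is the reverse direction of the argument in the proof of Lemma \ref{lem.intertwine-entirely}(2): since $q\cB q \subset q_1(\M_N(\C)\ovt\cB)q_1$ with expectation and since $q$ has central support $z_1$ inside $\cB$, and since $q_1$ is built so that $q_1 z_1$ is (approximately) a finite sum of projections subequivalent to $q$, an intertwiner of $\cA p$ into $q_1(\M_N(\C)\ovt\cB)q_1$ can be compressed by a partial isometry $w$ with $ww^* \le q_1$, $w^*w \le q$ (summing finitely many such), producing a $\ast$-homomorphism $\cA p \to \M_m(\C)\ot q\cB q$ and a nonzero intertwining partial isometry --- exactly the computation $\psi(a) = (1\ot w^*)\vphi(a)(1\ot w)$ carried out there. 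One must check this works for \emph{every} nonzero $p \in \cA' \cap \cM$, which is fine since $\cA'\cap\cM \subset \cA'\cap\cM_1$. The main obstacle I expect is precisely this bookkeeping around the fact that $\Gamma$ does not preserve the central support $z_1$ of $q$ in $\cB$, so that ``cutting down to a tracial corner'' and ``preserving regularity of $\cA$'' must be done simultaneously and compatibly with the $\Gamma$-action; the amenability of $\cB$ is the key resource making the regular amenable subalgebra survive the amplification.
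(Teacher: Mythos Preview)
Your reduction has a genuine gap. You want to cut down by a larger finite-trace projection $q_1 \in \M_N(\C)\ovt\cB$ so that $\cM_1 := q_1(\M_N(\C)\ovt\cB \rtimes \Gamma)q_1$ becomes a tracial crossed product $A_0 \rtimes \Gamma$. But the corner $q_1(\cB \rtimes \Gamma)q_1$, while tracial, is \emph{not} of the form $A_0 \rtimes \Gamma$: the elements $q_1 u_g q_1$ are not unitaries unless $\sigma_g(q_1) = q_1$. In the intended application --- $\cB = \rL^\infty(X\times\R)$ the Maharam extension of a free ergodic type \tIII\ action --- there is in general \emph{no} nonzero $\Gamma$-invariant finite-trace projection in $\cB$ (for instance in the type \tIII$_1$ case, $\Gamma \actson X\times\R$ is ergodic). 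So no choice of $q_1$ or amplification will put you in the setting where \cite[Theorem 3.1]{PV12} applies verbatim. The subsequent steps (transporting regularity of $\cA$ to $\cM_1$, applying \cite{PV12}, descending) are therefore built on a non-existent object.

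The paper's device to reach a tracial situation is the \emph{dual coaction}, not a compression. For each factor $\Gamma_i$ (writing $\cN_i = \cB \rtimes \widehat\Gamma_i$ so that $\cM = q(\cN_i \rtimes \Gamma_i)q$), one has the normal trace-preserving $*$-homomorphism
\[
\Delta : \cM \to \cM \ovt \rL\Gamma_i, \qquad \Delta(q x u_g q) = q x u_g q \ot u_g \quad (x \in \cN_i,\ g \in \Gamma_i),
\]
which is well defined regardless of whether $q$ is $\Gamma_i$-invariant. Using Lemma \ref{lem.intertwine-entirely} one isolates a central projection $z \in \cZ(\cM)$ such that $\cA z \npreceq_\cM q\cN_i q$; this transports to $\Delta(\cA z) \npreceq_{\cM \ovt \rL\Gamma_i} \cM \ot 1$. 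Now $\cM \ovt \rL\Gamma_i$ \emph{is} a finite tracial von Neumann algebra of exactly the shape to which \cite[Theorem 3.1]{PV12} applies, and the normalizer of $\Delta(\cA z)$ contains $\Delta(\cM z)$. One obtains a $\Delta(\cM z)$-central state on $\langle \cM \ovt \rL\Gamma_i, e_{\cM\ot 1}\rangle$, pulls it back through an explicit $*$-homomorphism $\langle q(\cB\rtimes\Gamma_i)q, e_{q\cB q}\rangle \to \langle \cM \ovt \rL\Gamma_i, e_{\cM\ot 1}\rangle$, and (using amenability of $\cB$) concludes that $\cB \rtimes \Gamma_i$ has an amenable direct summand; hence $z=0$. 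Doing this for each $i$ and combining via the commuting Fourier truncations $P^i_{\cF_i}$ gives the intertwining conclusion. Your instincts about Lemma \ref{lem.intertwine-entirely} handling the bookkeeping, and about the descent $\preceq$-argument, are on target; what is missing is this coaction trick to manufacture the tracial ambient algebra.
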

\begin{proof}
Assume that for all $i \in \{1,\ldots,n\}$, the von Neumann algebra $\cB \rtimes \Gamma_i$ has no amenable direct summand. We will prove that for every nonzero projection $p \in \cA' \cap \cM$, we have $\cA p \preceq_{\cM} q \cB q$.

For every $i \in \{1,\ldots,n\}$, we denote by $\Gammah_i$ the product of all $\Gamma_j$ with $j \neq i$. Put $\cN = \cB \rtimes \Gamma$ and $\cN_i := \cB \rtimes \Gammah_i$. So we identify $\cN = \cN_i \rtimes \Gamma_i$ and $\cM = q (\cN_i \rtimes \Gamma_i) q$. We also write $\cM_i = q \cN_i q$ and we denote by $E_{\cM_i} : \cM \recht \cM _i$ the unique trace preserving conditional expectation.

Fix $i \in \{1,\ldots,n\}$. For every finite subset $\cF \subset \Gamma_i$, denote by $P^i_\cF$ the orthogonal projection of $\rL^2(\cM)$ onto the closed linear span of $\{q x u_g q \mid x \in \cN_i, g \in \cF \}$. Since $\cN_\cM(\cA)\dpr = \cM$, Lemma \ref{lem.intertwine-entirely} provides a central projection $z \in \cZ(\cM)$ and a sequence of unitaries $w_k \in \cU(\cA z)$ such that
\begin{itemize}
\item $\lim_k \|P^i_\cF(w_k)\|_2 = 0$ for all finite subsets $\cF \subset \Gamma_i$~;
\item for every $\eps > 0$, there exists a finite subset $\cF' \subset \Gamma_i$ such that $\|a - P^i_{\cF'}(a)\|_2 < \eps$ for all $a$ in the unit ball of $\cA(q-z)$.
\end{itemize}
We will prove that $z = 0$. We will actually show that if $z \neq 0$, then $\cB \rtimes \Gamma_i$ has an amenable direct summand.

So, assume that $z \neq 0$. Denote by $\Delta : \cM \recht \cM \ovt \rL \Gamma_i$ the \emph{dual coaction}, i.e.\ the normal unital trace preserving $*$-homomorphism given by
$$\Delta : \cM \recht \cM \ovt \rL \Gamma_i : \Delta(q x u_g q) = q x u_g q \ot u_g \quad\text{for all}\;\; x \in \cN_i, g \in \Gamma_i \; .$$
For every finite subset $\cF \subset \Gamma_i$, denote by $Q_\cF$ the orthogonal projection of $\rL^2(\rL \Gamma_i)$ onto the closed linear span of $\{u_g \mid g \in \cF\}$. Note that $(1 \ot Q_\cF)\Delta(x) = \Delta(P^i_\cF(x))$ for all $x \in \cM$ and all finite subsets $\cF \subset \Gamma_i$. Since $\Delta$ is $\|\,\cdot\, \|_2$-preserving, it follows that
$$\lim_k \|(1 \ot Q_\cF)(\Delta(w_k))\|_2 = 0 \quad\text{for all finite subsets}\;\; \cF \subset \Gamma_i \; .$$
This implies that $\Delta(\cA z) \npreceq_{\cM \ovt \rL \Gamma_i} \cM \ot 1$. Note that $\cA z$ is amenable. Put $z_1 = \Delta(z) \in \cM \ovt \rL \Gamma_i$. Because $z \in \cZ(\cM)$, the normalizer of $\Delta(\cA z)$ inside $\Delta(z)(\cM \ovt \rL \Gamma_i) \Delta(z)$ contains $\Delta(\cM z)$. So \cite[Theorem 3.1]{PV12} says that the basic construction von Neumann algebra $z_1 \, \langle \cM \ovt \rL \Gamma_i , e_{\cM \ot 1} \rangle \, z_1$ admits a $\Delta(\cM z)$-central state $\Om$ such that $\Om(\Delta(x)) = \Tr(x)$ for all $x \in \cM z$.

Put $\cC_i := q(\cB \rtimes \Gamma_i)q$. Observe that
$$E_{\cM \ot 1} (\Delta(x)) = \Delta(E_{q \cB q}(x)) \quad\text{for all}\;\; x \in \cC_i \; .$$
Therefore, there is a unique normal $*$-homomorphism
$$\Psi : \langle \cC_i , e_{q \cB q} \rangle \recht \langle \cM \ovt \rL \Gamma_i , e_{\cM \ot 1} \rangle : \begin{cases} \Psi(x) = \Delta(x) \;\;\text{for all}\;\; x \in \cC_i \; , \\
\Psi(e_{q\cB q}) = e_{\cM \ot 1} \; . \end{cases}$$
Define the state $\Om_1$ on $\langle \cC_i , e_{q \cB q} \rangle$ given by $\Om_1(S) = \Om(z_1 \Psi(S) z_1)$. Note that $\Om_1$ is $\cC_i$-central and that $\Om_1(x) = \Tr(xz)$ for all $x \in \cC_i$.

Since $q \cB q$ is amenable, also $\langle \cC_i , e_{q \cB q} \rangle$ is an amenable von Neumann algebra acting on the Hilbert space $\rL^2(\cM)$. So there exists a (non-normal) conditional expectation $\cT : \B(\rL^2(\cM)) \recht \langle \cC_i , e_{q \cB q} \rangle$. The composition $\Om_1 \circ \cT$ is a $\cC_i$-central state on $\B(\rL^2(\cM))$ whose restriction to $\cC_i$ is normal. Therefore $\cC_i$ must have an amenable direct summand. Then also $\cB \rtimes \Gamma_i$ has an amenable direct summand. So, we have shown that $z = 0$.

Choose $\eps > 0$. For every $i = 1,\ldots,n$, we can take a finite subset $\cF_i \subset \Gamma_i$ such that $$\|a - P^i_{\cF_i}(a)\|_2 < \frac{\eps}{n}$$ for all $a$ in the unit ball of $\cA$. The projections $P^i_{\cF_i}$ commute and their product equals the projection $P_\cF$ of $\rL^2(\cM)$ onto the closed linear span of $\{q b u_g q \mid b \in \cB , g \in \cF\}$, where $\cF = \cF_1 \times \cdots \times \cF_n$. It follows that $\|a - P_\cF(a)\|_2 < \eps$ for all $a$ in the unit ball of $\cA$. By Lemma \ref{lem.intertwine-entirely}, this precisely means that $\cA p \preceq_{\cM} q \cB q$ for every nonzero projection $p \in \cA' \cap \cM$.
\end{proof}

Before proving Theorem \ref{thmA}, we need one more technical lemma.

\begin{lem}\label{lem.Fourierzero}
Let $(\cB,\Tr)$ be a von Neumann algebra equipped with a normal semifinite faithful trace $\Tr$. Assume that $\Gamma$ is a countable group and $\Gamma \actson \cB$ a trace preserving action. Put $\cN = \cB \rtimes \Gamma$. Denote by $E : \cN \recht \cB$ the canonical conditional expectation and still denote by $\Tr$ the normal semifinite faithful trace on $\cN$ given by $\Tr \circ E$. For every $w \in \cN$ and $g \in \Gamma$, denote by $(w)_g := E_\cB(w u_g^*)$ the $g$'th Fourier coefficient of $w$.

Assume that $(w_n)$ is a bounded net in $\cN$ such that $(w_n)_g \recht 0$ $\ast$-strongly for all $g \in \Gamma$. Then,
\begin{equation}\label{eq.aim}
\lim_n \|E_\cB(x w_n y^*)\|_{2,\Tr} = 0 \quad\text{for all}\;\; x,y \in \cN \cap \rL^2(\cN,\Tr) \; .
\end{equation}
\end{lem}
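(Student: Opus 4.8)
The plan is to reduce, via linearity and a $\|\cdot\|_{2,\Tr}$-density argument, to the case $x = b u_g$ and $y = c u_h$ with $g,h \in \Gamma$ and $b, c \in \cB \cap \rL^2(\cB,\Tr)$. Write $C := \sup_n \|w_n\| < \infty$. The observation that makes the reduction work is that for any $x \in \cN$ the Fourier coefficients $(x)_g = E_\cB(x u_g^*)$ satisfy $\|(x)_g\| \le \|x\|$, so every finite Fourier truncation $x_F := \sum_{g \in F}(x)_g u_g$ again lies in $\cN$, and lies in $\rL^2(\cN,\Tr)$ when $x$ does (with $\|(x)_g\|_{2,\Tr} \le \|x\|_{2,\Tr}$ and $\|x - x_F\|_{2,\Tr} \recht 0$ as $F \nearrow \Gamma$). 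Given $x,y \in \cN \cap \rL^2(\cN,\Tr)$ and $\eps > 0$, I would first pick a finite $F$ with $C\|y\|\,\|x - x_F\|_{2,\Tr} < \eps/3$; the constant $\|x_F\|$ is now fixed, so I can then pick a finite $G$ with $C\|x_F\|\,\|y - y_G\|_{2,\Tr} < \eps/3$. Using $\|E_\cB(z)\|_{2,\Tr} \le \|z\|_{2,\Tr}$ and $\|azb\|_{2,\Tr} \le \|a\|\,\|z\|_{2,\Tr}\,\|b\|$ one gets
\[
\|E_\cB(x w_n y^*) - E_\cB(x_F w_n y_G^*)\|_{2,\Tr} \le C\|y\|\,\|x - x_F\|_{2,\Tr} + C\|x_F\|\,\|y - y_G\|_{2,\Tr} < \tfrac{2\eps}{3}
\]
for every $n$, so only $E_\cB(x_F w_n y_G^*)$ needs to be handled, and by bilinearity this is a finite sum of terms of the announced form.

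For the base case I would compute the identity Fourier coefficient explicitly: expanding $w_n = \sum_k (w_n)_k u_k$ in $\rL^2(\cN,\Tr)$ and using $u_g b' = \si_g(b')u_g$ gives $u_g w_n u_h^* = \sum_k \si_g((w_n)_k)\,u_{gkh^{-1}}$, whence, by $\cB$-bimodularity of $E_\cB$,
\[
E_\cB(b u_g w_n u_h^* c^*) = b\, E_\cB(u_g w_n u_h^*)\, c^* = b\,\si_g\big((w_n)_{g^{-1}h}\big)\,c^* ,
\]
the only surviving term being the one with $gkh^{-1} = e$, i.e. $k = g^{-1}h$.

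To conclude, put $a_n := \si_g((w_n)_{g^{-1}h})$. By hypothesis $(w_n)_{g^{-1}h} \recht 0$ $\ast$-strongly and $\|(w_n)_{g^{-1}h}\| \le C$; since $\si_g$ is a $\Tr$-preserving (hence normal) $\ast$-automorphism of $\cB$, it is implemented by a unitary on $\rL^2(\cB,\Tr)$ and so is $\ast$-strongly continuous on bounded sets, giving $a_n \recht 0$ $\ast$-strongly with $\|a_n\| \le C$. Since $c^* \in \rL^2(\cB,\Tr)$ is fixed and $a_n$ acts on it by left multiplication, $\|a_n c^*\|_{2,\Tr} \recht 0$, and therefore $\|b a_n c^*\|_{2,\Tr} \le \|b\|\,\|a_n c^*\|_{2,\Tr} \recht 0$. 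Feeding this into the estimate of the first paragraph yields $\limsup_n \|E_\cB(x w_n y^*)\|_{2,\Tr} \le \eps$ for every $\eps > 0$, which is \eqref{eq.aim}.

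I do not expect a real obstacle: the whole content is the identity $E_\cB(b u_g w_n u_h^* c^*) = b\,\si_g((w_n)_{g^{-1}h})\,c^*$ together with the elementary fact that a bounded $\ast$-strongly null net, multiplied by a bounded element on one side and a fixed $\rL^2$-vector on the other, tends to $0$ in $\|\cdot\|_{2,\Tr}$. The only point needing a little care is ordering the two successive approximations so that the error terms stay controlled uniformly in $n$; this works precisely because Fourier truncations of elements of $\cN$ stay in $\cN$ and because $\sup_n \|w_n\| < \infty$.
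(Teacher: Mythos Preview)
Your proof is correct and follows essentially the same route as the paper: reduce by density to the base case $x = b u_g$, $y = c u_h$, compute $E_\cB(b u_g w_n u_h^* c^*) = b\,\si_g((w_n)_{g^{-1}h})\,c^*$, and conclude from $\ast$-strong convergence. The only minor difference is that the paper bounds the approximation error in $\|\cdot\|_{1,\Tr}$ via Cauchy--Schwarz (using $\|x_1\|_{2,\Tr}$ and $\|y\|_{2,\Tr}$) and then upgrades to $\|\cdot\|_{2,\Tr}$ using operator-norm boundedness of $E_\cB(x w_n y^*)$, whereas you exploit directly that $y \in \cN$ and the Fourier truncation $x_F \in \cN$ are bounded operators to stay in $\|\cdot\|_{2,\Tr}$ throughout; your version is marginally more direct.
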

\begin{proof}
We may and will assume that $(w_n)$ is a net in the unit ball of $\cN$. Define $W = \lspan \{ a u_g \mid a \in \cB \cap \rL^2(\cB,\Tr) , g \in \Gamma\}$. Observe that $W$ is a $\|\cdot\|_{2,\Tr}$-dense subspace of $\rL^2(\cN,\Tr)$. We claim that
$$\lim_n \|E_\cB( x w_n y^*)\|_{1,\Tr} = 0 \quad\text{for all}\;\; x,y \in W \; .$$
To prove this claim, it suffices to take $x = a u_g$ and $y = b u_h$ with $a,b \in \cB \cap \rL^2(\cB,\Tr)$ and $g,h \in \Gamma$. In that case, we have
$$E_\cB(x w_n y^*) = a \, \sigma_g\bigl((w_n)_{g^{-1} h}\bigr) \, b^* \; .$$
Therefore, by the Cauchy-Schwarz inequality, we have
$$\|E_\cB(x w_n y^*)\|_{1,\Tr} \leq \|a\|_{2,\Tr} \, \bigl\| \sigma_g\bigl((w_n)_{g^{-1} h}\bigr) \, b^* \bigr\|_{2,\Tr} = \|a\|_{2,\Tr} \, \bigl\| (w_n)_{g^{-1} h} \, \sigma_{g^{-1}}(b)^* \bigr\|_{2,\Tr} \; ,$$
which tends to zero because $(w_n)_{g^{-1} h}$ tends to $0$ strongly as a net of operators on $\rL^2(\cB,\Tr)$.

Then choose $x,y \in \cN \cap \rL^2(\cN,\Tr)$ and $\eps > 0$. First take $x_1 \in W$ and then take $y_1 \in W$ such that
$$\|x - x_1\|_{2,\Tr} \, \|y \|_{2,\Tr} < \eps \quad\text{and}\quad \|x_1\|_{2,\Tr} \, \|y - y_1 \|_{2,\Tr} < \eps \; .$$
It follows from the Cauchy-Schwarz inequality and the claim above that
$$
\limsup_n \|E_\cB(x w_n y^*)\|_{1,\Tr} < 2 \eps + \limsup_n \|E_\cB(x_1 \, w_n \, y_1^*)\|_{1,\Tr} = 2\eps \; .
$$
Since this holds for all $\eps > 0$, we have shown that $\|E_\cB(x w_n y^*)\|_{1,\Tr} \recht 0$. Since $E_\cB(x w_n y^*)$ is a bounded net in $\cB$, it also converges to zero in $\|\,\cdot\,\|_{2,\Tr}$. So, \eqref{eq.aim} is proven.
\end{proof}

We are now ready to prove Theorem \ref{thmA}.

\begin{proof}[Proof of Theorem \ref{thmA}]
We put $B = \rL^\infty(X)$ and $M = B \rtimes \Gamma$. We denote by $E_B : M \recht B$ the canonical conditional expectation.
Assume that $A \subset M$ is a Cartan subalgebra that is not unitarily conjugate with $B$. We will prove that $B \rtimes \Gamma_i$ is an amenable von Neumann algebra for at least one $i \in \{1,\ldots,n\}$. By Theorem \ref{cartan}, we get that $A \npreceq_M B$. By Theorem \ref{intertwining-general}, we get a net of unitaries $(v_j)_{j \in J}$ in $A$ such that $\lim_{j \in J} E_B(v_j u_g^*) = 0$ $*$-strongly for all $g \in \Gamma$.

Denote $\cB = \rL^\infty(X \times \R)$ and denote by $\Gamma \actson \cB$ the Maharam extension of $\Gamma \actson B$. We identify $\core(M) = \cB \rtimes \Gamma$ and we denote by $E_\cB : \core(M) \recht \cB$ the canonical conditional expectation. We denote by $\Tr$ the canonical normal semifinite faithful trace on $\core(M)$ and note that $E_\cB$ is trace preserving. Since $\Gamma \actson X$ is essentially free, also $\Gamma \actson X \times \R$ is essentially free and hence, $\cB \subset \core(M)$ is maximal abelian. Since $E_\cB(x) = E_B(x)$ for all $x \in M$, we have that $\lim_{j \in J} E_\cB(v_j u_g^*) = 0$ $*$-strongly for all $g \in \Gamma$.

Choose a faithful normal state $\om$ on $A$ and choose a faithful normal conditional expectation $E_A : M \recht A$. We still denote by $\om$ the extension $\om \circ E_A$ to a faithful normal state on $M$. Since $A \subset M$ is a Cartan subalgebra, we have that $A \rtimes_\om \R \subset M \rtimes_\om \R$ is a regular subalgebra. Obviously, $A \subset M^\om$ and $A \rtimes_\om \R$ is abelian. Denote by $\Pi_\om : M \rtimes_\om \R \recht \core(M)$ the canonical trace preserving $*$-isomorphism given by Connes's Radon-Nikodym cocycle theorem. We have $\Pi_\om(\pi_\om(x)) = x$ for all $x \in M$.

Choose a nonzero finite trace projection $p \in A \rtimes_\om \R$. By Lemma \ref{lem.equiv}, the finite trace projection $\Pi_\om(p) \in \cB \rtimes \Gamma$ is equivalent with a projection $q \in \cB$. Take $v \in \cB \rtimes \Gamma$ such that $v^* v = \Pi_\om(p)$ and $vv^* = q$. Denote $\cM := q(\cB \rtimes \Gamma) q$ and $\cA = v \Pi_\om(A \rtimes_\om \R) v^*$. Since $A \rtimes_\om \R$ is abelian and $A \rtimes_\om \R \subset M \rtimes_\om \R$ is regular, it follows from \cite[Lemma 3.5]{popa-malleable1} that $\cA \subset \cM$ is regular.

Define $w_j := v v_j \Pi_\om(p) v^* = v \Pi_\om( \pi_\om(v_j) p) v^*$. Note that $(w_j)_{j \in J}$ is a net of unitaries in $\cA$. We observed above that $\lim_{j \in J} E_\cB(v_j u_g^*) = 0$ $*$-strongly for all $g \in \Gamma$. So by Lemma \ref{lem.Fourierzero}, we have that $\lim_j \|E_{\cB q}(x w_j y)\|_2 = 0$ for all $x,y \in \cM$. So, $\cA \npreceq_\cM \cB q$. Theorem \ref{newkey} then provides an $i \in \{1,\ldots,n\}$ such that $\cB \rtimes \Gamma_i$ has an amenable direct summand.

It remains to prove that $B \rtimes \Gamma_i$ is amenable. First observe that $\cB \rtimes \Gamma_i$ can be identified with $\core(B \rtimes \Gamma_i)$. So, Takesaki's duality theorem \cite[Theorem X.2.3]{takesakiII} implies that $B \rtimes \Gamma_i$ has an amenable direct summand. Denote by $\Gammah_i$ the direct product of all $\Gamma_j$, $j \neq i$. Then, $\Gammah_i$ naturally acts by automorphisms of $B \rtimes \Gamma_i$. Since $M$ is a factor, this action is ergodic on $\cZ(B \rtimes \Gamma_i)$. Since $B \rtimes \Gamma_i$ has an amenable direct summand, it follows that $B \rtimes \Gamma_i$  is amenable.
\end{proof}

The proof of Corollary \ref{corA} is now immediate.

\begin{proof}[Proof of Corollary \ref{corA}]
Note that $\Gamma = \F_2$ is weakly amenable and belongs to $\QHreg$ (see e.g.\ \cite[Lemma 2.4]{PV12} for references). We claim that $M:= \rL^\infty(X \times Y) \rtimes \Gamma$ is nonamenable. Indeed, $M$ contains $N := \rL^\infty(X \times Y) \rtimes \Ker \pi \cong (\rL^\infty(X) \rtimes \Ker \pi) \ovt \rL^\infty(Y)$ as a von Neumann subalgebra with expectation. Since $\Ker \pi \actson (X,\mu)$ is probability measure preserving and since $\Ker \pi$ is a nonamenable group, it follows that $N$ is nonamenable. So, $M$ follows nonamenable as well.

By Theorem \ref{thmA}, $\rL^\infty(X \times Y)$ is the unique Cartan subalgebra of $M$ up to unitary conjugacy.

It remains to determine the type and the flow of weights of $M$. Put $P = \rL^\infty(Y) \rtimes \Z$. First consider the trivial cases. If $Y$ admits an equivalent $\Z$-invariant probability measure, both $M$ and $P$ are of type \tIIone. If $Y$ admits an equivalent $\Z$-invariant infinite measure, both $M$ and $P$ are of type \tIIinfty\ (because $Y$ was assumed to be nonatomic to rule out the possibility for $P$ to be of type \tIinfty). So, assume that $P$ is of type \tIII. It remains to prove that $M$ and $P$ have an isomorphic flow of weights.

Denote by $\Z \actson \Ytil = Y \times \R$ the Maharam extension of $\Z \actson Y$. Consider the action $\Gamma \actson X \times \Ytil$ given by $g \cdot (x,y) = (g \cdot x,\pi(g) \cdot y)$. Since $\Gamma \actson X$ is measure preserving, the action $\Gamma \actson X \times \Ytil$ can be identified with the Maharam extension of $\Gamma \actson X \times Y$. So, the flow of weights of $M$ can be identified with the natural action of $\R$ on the von Neumann algebra $\rL^\infty(X \times \Ytil)^\Gamma$ of $\Gamma$-invariant functions. Since $\Ker \pi \actson X$ is ergodic, we get that
$$\rL^\infty(X \times \Ytil)^\Gamma = 1 \ot \rL^\infty(\Ytil)^\Z \; .$$
Since the flow of weights of $P$ is given by the natural action of $\R$ on $\rL^\infty(\Ytil)^\Z$, we have found the required isomorphism between the flow of weights of $M$ and $P$.
\end{proof}

\section{Proofs of Theorems~\ref{thmB} and \ref{thmC}}

\begin{proof}[Proof of Theorem~\ref{thmB}]
The proof is very similar to the one of \cite[Theorem 3.2]{CS11}. Put $B = \rL^\infty(X)$ and $M = B \rtimes \Gamma$. Let $e \in M$ be a projection and $P \subset e M e$ a von Neumann subalgebra with expectation $E_P : eMe \recht P$ such that $pPp \npreceq_M B$ for all nonzero projections $p \in P$. Lemma $\ref{intertwining-abelian}$ provides a diffuse abelian $\ast$-subalgebra $A \subset P$ with expectation $E_A : P \recht A$ such that $A \npreceq_M B$.

Choose a faithful normal tracial state $\vphi$ on $A$ and extend $\vphi$ to a normal state on $eMe$ by the formula $\vphi = \vphi \circ E_A \circ E_P$. It then follows that the modular group $(\si_t^\vphi)_{t \in \R}$ globally preserves the subalgebras $A \subset P \subset ePe$ and hence, also globally preserves the subalgebras $P' \cap eMe \subset A' \cap eMe \subset eMe$. It then follows from \cite[Theorem IX.4.2]{takesakiII} that the inclusion $P' \cap eMe \subset A' \cap eMe$ is with expectation. So, in order to show that $P' \cap eMe$ is amenable, it suffices to prove that $A' \cap eMe$ is amenable.

Since $A$ is abelian and $\vphi = \vphi \circ E_A \circ E_P$, we have $A \subset (eMe)^\varphi$, and $A \subset \cZ((A' \cap eMe) \rtimes_\varphi \R)$.
Let $E_B : M \to B$ be the canonical faithful normal conditional expectation. We will denote by $\core(M)$ (resp.\ $\core(B)$) the continuous core of $M$ (resp.\ $B$) and by $\Tr$ the canonical semifinite faithful normal trace on $\core(M)$ so that $\Gamma \curvearrowright \core(B)$ is a trace-preserving action and $\core(M) = \core(B) \rtimes \Gamma$. Using the canonical trace-preserving $\ast$-isomorphism $\Pi_{\varphi} : eMe \rtimes_\varphi \R \to e\core(M)e$ (see Subsection $\ref{flow}$), we write $\core(A' \cap eMe) = \Pi_{\varphi}((A' \cap eMe) \rtimes_\varphi \R)$. Note that $\Tr_{|\core(A' \cap eMe)}$ is semifinite and $A \subset \cZ(\core(A' \cap eMe))$.

Let $p \in \core(A' \cap eMe)$ be a nonzero finite trace projection. Fix a finite set of unitaries $F \subset \cU(p \core(A' \cap eMe)p)$ and a nonzero projection $z \in p\core(A' \cap eMe)p$ that commutes with all $u \in F$. We denote by $J$ the anti-unitary involution on the Hilbert space $\rL^2(\core(M),\Tr)$.
To prove that $p \core(A' \cap eMe)p$ is amenable, we use the criterion given in \cite[Remark 5.29]{connes76} (see \cite[Lemma 2.2]{haagerup} for the non-factorial case). So, we need to show that
\begin{equation}\label{haagerup}
\Bigl\| \sum_{u \in F} u z \otimes J u z J \Bigr\|_\minim = |F| \; ,
\end{equation}
where $\|\,\cdot\,\|_\minim$ denotes the minimal (spatial) C$^*$-tensor norm on $\core(M) \ot_\alg J \core(M) J$. In order to prove \eqref{haagerup}, we use the malleable deformation discovered by Chifan-Sinclair in \cite{CS11}.

Let $\pi : \Gamma \to \cO(H_\R)$ be an orthogonal representation that is weakly contained in the regular representation, together with a proper map $c : \Gamma \to H_\R$ such that $\sup_{x \in \Gamma} \|c(g x h) - \pi_g(c(x))\| < \infty$ for all $g, h \in \Gamma$. Consider the Gaussian construction associated with the orthogonal representation $\pi$. This yields an abelian von Neumann algebra $(D, \tau)$ generated by a family of unitaries $(\omega(\xi))_{\xi \in H_\R}$ that satisfy
\begin{itemize}
\item $\omega(0) = 1$, $\omega(- \xi) = \omega(\xi)^*$, $\omega(\xi + \eta) = \omega(\xi)\omega(\eta)$ for all $\xi, \eta \in H_\R$.
\item $\tau(\omega(\xi)) = \exp(- \|\xi\|^2)$ for all $\xi \in H_\R$.
\end{itemize}
The Gaussian action is then given by $g \cdot (\omega(\xi)) = \omega(\pi_g(\xi))$ for all $g \in \Gamma$ and all $\xi \in H_\R$. Define $\widetilde M = (D \ovt B) \rtimes \Gamma$ and observe that $\core(\widetilde M) = (D \ovt \core(B)) \rtimes \Gamma$. We will regard $\core(M)$ as a unital von Neumann subalgebra of $\core(\widetilde M)$. We still denote by $J$ the canonical anti-unitary involution on $\rL^2(\core(\widetilde M))$, extending the $J$ on $\rL^2(\core(M))$ considered above.

Define the Hilbert spaces
\begin{align*}
\cH & = \rL^2(\core(B)) \otimes \ell^2(\Gamma) = \rL^2(\core(M)), \\
\widetilde {\cH} & = \rL^2(D) \otimes \rL^2(\core(B)) \otimes \ell^2(\Gamma) = \rL^2(\core(\widetilde M)).
\end{align*}
Let $v : \cH \to \widetilde{\cH}$ be the isometry defined by $v(\zeta \otimes \delta_g) = 1 \otimes \zeta \otimes \delta_g$ for all $\zeta \in \rL^2(\core(B))$ and all $g \in \Gamma$. Put $e = vv^*$ and $e^\perp = 1 - e$. Define the $1$-parameter group of unitaries ${(V_t)}_{t \in \R}$ on the Hilbert space $\widetilde{\cH}$ by
$$V_t (d \otimes \zeta \otimes \delta_h) = d \omega(t c(h)) \otimes \zeta \otimes \delta_h \;\; , \quad \forall d \in D, \forall \zeta \in \rL^2(\core(B)), \forall h \in \Gamma, \forall t \in \R.$$

We need the following lemma from \cite{CS11}.

\begin{lem}[{\cite[Lemma 2.6]{CS11}}]\label{uniform-convergence}
For all $x \in \core(B) \rtimes_{\red} \Gamma$, we have that
$$\lim_{t \to 0} \|x V_t v - V_t v x\|_\infty = \lim_{t \to 0} \|J x J V_t v - V_t v J x J\|_\infty = 0.$$
\end{lem}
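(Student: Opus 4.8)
The plan is to verify the two limits by a direct computation on a dense $*$-subalgebra and then pass to the general case by a standard $3\varepsilon$-argument. First I would observe that it suffices to check the statement on the algebraic crossed product $\core(B) \rtimes_{\alg} \Gamma$, since this is norm-dense in the reduced crossed product, the unitaries $V_t$ and the isometry $v$ are contractions, and the operator-norm maps $x \mapsto \|xV_tv - V_tvx\|_\infty$ are uniformly Lipschitz in $x$. Moreover, by linearity it is enough to treat elementary tensors $x = b u_g$ with $b \in \core(B)$ and $g \in \Gamma$, and in fact to treat $x = u_g$ and $x = b \in \core(B)$ separately, since $V_tv$ and $u_g$ satisfy a clean commutation relation and elements of $\core(B)$ act diagonally in the $\ell^2(\Gamma)$-variable.

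For $x = u_g$: acting on a vector $d \otimes \zeta \otimes \delta_h$, one computes $u_g V_t v (\zeta \otimes \delta_h) = \sigma_g(\omega(tc(h))) \otimes \sigma_g(\zeta) \otimes \delta_{gh} = \omega(t\pi_g(c(h))) \otimes \sigma_g(\zeta) \otimes \delta_{gh}$, whereas $V_t v u_g (\zeta \otimes \delta_h) = \omega(tc(gh)) \otimes \sigma_g(\zeta) \otimes \delta_{gh}$. So the difference is governed by the discrepancy vector $tc(gh) - t\pi_g(c(h))$, whose norm is at most $|t|\,\sup_{k \in \Gamma}\|c(gk) - \pi_g(c(k))\| =: |t| C_g < \infty$ by the quasi-cocycle hypothesis on $c$ (this is exactly where that hypothesis enters, and crucially $C_g$ depends only on $g$, not on $h$). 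Since in the Gaussian algebra $\|\omega(\xi) - \omega(\eta)\|_\infty \le \|\xi - \eta\|$ up to a universal constant (indeed $\|\omega(\xi)-1\|_\infty \le \|\xi\|$ by functional calculus on the Gaussian unitary), we get $\|u_g V_t v - V_t v u_g\|_\infty \le \mathrm{const}\cdot |t| C_g \to 0$ as $t \to 0$, uniformly over $h$. For $x = b \in \core(B)$ the element acts only in the $\core(B)$-variable and commutes with both $V_t$ (which acts on the $D$- and $\ell^2(\Gamma)$-variables) and $v$, so the commutator is identically zero. Combining, for a general $x = \sum_{g \in S} b_g u_g$ with $S \subset \Gamma$ finite we obtain $\|xV_tv - V_tvx\|_\infty \le |t|\,\mathrm{const}\sum_{g \in S}\|b_g\|_\infty C_g \to 0$.

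The argument for the $J\,\cdot\,J$-version is identical in structure: $JxJ$ for $x \in \core(B)\rtimes_{\alg}\Gamma$ lies in the commutant and acts via the right regular representation and the modular conjugation on $\core(B)$; the same commutation relations hold with $V_tv$ up to the same discrepancy vectors $tc(gh)-t\pi_g(c(h))$, and the same Gaussian estimate applies. Finally, the passage from $\core(B)\rtimes_{\alg}\Gamma$ to $\core(B)\rtimes_{\red}\Gamma$ is the $3\varepsilon$-argument: given $x$ in the reduced crossed product and $\varepsilon>0$, pick $x_0$ in the algebraic crossed product with $\|x-x_0\|_\infty < \varepsilon$; then $\|xV_tv - V_tvx\|_\infty \le 2\varepsilon + \|x_0V_tv - V_tvx_0\|_\infty$, and the latter is $<\varepsilon$ for $|t|$ small enough. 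The only mild subtlety — and the one place to be careful — is making sure the bound on $\|x_0V_tv-V_tvx_0\|_\infty$ is genuinely uniform in the $\ell^2(\Gamma)$-direction, which is guaranteed precisely because $\sup_{k\in\Gamma}\|c(gk)-\pi_g(c(k))\|$ is finite for each fixed $g$; no uniformity over $g$ is needed since $x_0$ has finite support.
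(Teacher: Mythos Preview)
The paper does not give its own proof of this lemma; it simply cites \cite[Lemma 2.6]{CS11}. Your overall strategy --- reduce to the algebraic crossed product by density, compute on generators $b u_g$, and use the quasi-cocycle bound $\sup_k\|c(gk)-\pi_g c(k)\|<\infty$ --- is exactly the standard argument and is correct in outline.

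There is, however, a genuine error in one step. You claim that $\|\omega(\xi)-1\|_\infty \le \|\xi\|$ in the Gaussian algebra $D$. This is false: for any $\xi\neq 0$, the unitary $\omega(\xi)$ is of the form $e^{iW(\xi)}$ for a nondegenerate real Gaussian variable $W(\xi)$, whose essential range is all of $\R$; hence $\omega(\xi)$ has spectrum the full unit circle and $\|\omega(\xi)-1\|_\infty = 2$. So the operator-norm estimate you invoke does not hold, and the justification ``by functional calculus on the Gaussian unitary'' is wrong.

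The fix is to notice that you do not need the operator norm of $\omega(\xi)-\omega(\eta)$ in $D$. The isometry $v$ lands in vectors of the form $1_D\otimes\zeta\otimes\delta_h$, so $(u_gV_tv - V_tvu_g)(\zeta\otimes\delta_h)$ has first tensor leg equal to $(\omega(t c(gh))-\omega(t\pi_g c(h)))\cdot 1_D \in \rL^2(D,\tau)$. What you need is therefore the $\rL^2$-norm
\[
\|\omega(\xi)-\omega(\eta)\|_{2,\tau}^2 = 2\bigl(1 - \exp(-\|\xi-\eta\|^2)\bigr) \le 2\,\|\xi-\eta\|^2,
\]
which does give a bound uniform in $h$ by the quasi-cocycle hypothesis. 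With this correction your argument goes through; the same remark applies to the $JxJ$-version.
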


Since Lemma \ref{uniform-convergence} only holds for $x$ in the C$^*$-algebra $\core(B) \rtimes_{\red} \Gamma$, we have to carefully approximate the elements $uz$, $u \in F$, by elements in this C$^*$-algebra. This approximation is given as follows. Let $\cE$ be the finite dimensional operator space spanned by $\{1\} \cup Fz \cup F^*z$. Since $\Gamma$ is assumed to be an exact group and since $\core(B)$ is abelian, hence exact, the reduced crossed product C$^*$-algebra $\core(B) \rtimes_{\red} \Gamma$ is exact and thus locally reflexive (see \cite[Theorem 9.3.1]{BO08}). Regarding $\cE \subset (\core(B) \rtimes_{\red} \Gamma)^{**}$, there exists a net of completely contractive positive maps $\varphi_i : \cE \to \core(B) \rtimes_{\red} \Gamma$ such that $\varphi_i \to \Id_{\cE}$ pointwise ultraweakly.  By the Hahn-Banach separation theorem, we may further assume that $\varphi_i(u z) \to u z$ strongly, for all $u \in F \cup F^*$. Thus $\lim_i \|(\varphi_i(u z) - u z)p\|_{2, \Tr} = 0$ for all $u \in F \cup F^*$.

Recall that $A \npreceq_M B$. So, Theorem \ref{intertwining-general} provides a sequence of unitaries $w_n \in \cU(A)$ for which the Fourier coefficients $(w_n)_g$ in the crossed product decomposition $M = B \rtimes \Gamma$ tend to zero $\ast$-strongly for all $g \in \Gamma$. Viewing $w_n$ as a sequence in $\core(M) = \core(B) \rtimes \Gamma$, the Fourier coefficients stay the same and hence, still tend to zero $\ast$-strongly. Since $A \subset \cZ(\core(A' \cap eMe))$, we see that the unitaries $w_n$ commute with the projection $z$. So, it follows from Lemma \ref{lem.Fourierzero} that
\begin{equation}\label{eq.convergence}
\lim_n \|E_{\core(B)} ((w_n z)_g) \|_{2, \Tr} = 0 \quad\text{for all}\;\; g \in \Gamma \; .
\end{equation}

For clarity, we denote by $\hat{x} \in \rL^2(\core(M))$ the vector that corresponds to an element $x \in \core(M)$ with $\Tr(x^* x) < \infty$.

\begin{claim}
Put $\delta = \|z\|_{2,\Tr} / 2$. For every $t > 0$, there exists an element  $x_t \in (A z)_1$ such that $\| e^\perp V_t v \widehat{x_t} \|_{\widetilde{\cH}} \geq \delta$.
\end{claim}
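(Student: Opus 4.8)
The plan is to push the whole estimate through the compression $v^{*}(\,\cdot\,)v$ onto the ``small'' Hilbert space $\cH=\rL^{2}(\core(M))$, where the deformation $V_{t}$ becomes completely explicit. Using $\omega(0)=1$, $\tau(\omega(\xi))=\exp(-\|\xi\|^{2})$ and the definitions of $v$ and $V_{t}$, one computes directly that $v^{*}V_{t}v$ is the Fourier multiplier on $\rL^{2}(\core(B))\otimes\ell^{2}(\Gamma)$ given by
\[
v^{*}V_{t}v\,(\zeta\otimes\delta_{g})=\exp\bigl(-t^{2}\|c(g)\|^{2}\bigr)\,\zeta\otimes\delta_{g}\qquad(\zeta\in\rL^{2}(\core(B)),\ g\in\Gamma).
\]
Since $v$ is an isometry and $V_{t}$ is unitary, for every $x\in\core(M)$ with $\Tr(x^{*}x)<\infty$ this yields $\|eV_{t}v\,\widehat{x}\,\|_{\widetilde{\cH}}^{2}=\|v^{*}V_{t}v\,\widehat{x}\,\|_{\cH}^{2}=\sum_{g\in\Gamma}\exp(-2t^{2}\|c(g)\|^{2})\,\|(x)_{g}\|_{2,\Tr}^{2}$, where $(x)_{g}=E_{\core(B)}(xu_{g}^{*})$, and hence $\|e^{\perp}V_{t}v\,\widehat{x}\,\|_{\widetilde{\cH}}^{2}=\Tr(x^{*}x)-\sum_{g}\exp(-2t^{2}\|c(g)\|^{2})\,\|(x)_{g}\|_{2,\Tr}^{2}$.

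Next I would feed in the sequence $(w_{n})$ in $\cU(A)$ produced just above. Since $w_{n}$ commutes with $z$ and $z$ is a subprojection of the unit of $A$, the element $w_{n}z$ lies in $(Az)_{1}$ and satisfies $\Tr\bigl((w_{n}z)^{*}w_{n}z\bigr)=\Tr(z)=4\delta^{2}=\sum_{g}\|(w_{n}z)_{g}\|_{2,\Tr}^{2}$ for every $n$. Now fix $t>0$. Because $c$ is proper, given $\eps>0$ there is a finite set $\cF\subset\Gamma$ with $\exp(-2t^{2}\|c(g)\|^{2})<\eps$ for all $g\notin\cF$; splitting the series over $\cF$ and its complement, bounding the tail by $\eps\sum_{g}\|(w_{n}z)_{g}\|_{2,\Tr}^{2}=4\eps\delta^{2}$, and using $\eqref{eq.convergence}$ (which gives $\|(w_{n}z)_{g}\|_{2,\Tr}\to0$ for each fixed $g$) to kill the finitely many remaining terms, I get $\limsup_{n}\|eV_{t}v\,\widehat{w_{n}z}\,\|^{2}\le4\eps\delta^{2}$. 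As $\eps>0$ was arbitrary, $\|eV_{t}v\,\widehat{w_{n}z}\,\|\to0$, so $\|e^{\perp}V_{t}v\,\widehat{w_{n}z}\,\|^{2}=4\delta^{2}-\|eV_{t}v\,\widehat{w_{n}z}\,\|^{2}\ge\delta^{2}$ for all $n$ large enough. Choosing such an $n$ and setting $x_{t}=w_{n}z$ proves the claim.

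The computation is essentially mechanical; the only substantive point is the order of the quantifiers, namely that $t$ is fixed \emph{before} $n$ is chosen. This is exactly where properness of the Chifan--Sinclair map $c$ enters: for fixed $t$ the symbol $\exp(-2t^{2}\|c(g)\|^{2})$ decays as $g\to\infty$, which lets the ``spreading out'' of the Fourier coefficients of $w_{n}z$ — the content of $A\npreceq_{M}B$, transported into $\core(M)$ in $\eqref{eq.convergence}$ — push $\widehat{w_{n}z}$ almost entirely into the orthogonal complement of the range of $v$. I expect no genuine obstacle beyond this bookkeeping.
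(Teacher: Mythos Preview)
Your argument is correct and follows essentially the same route as the paper: fix $t$, exploit properness of $c$ to isolate a finite set of group elements, and use \eqref{eq.convergence} to kill those finitely many Fourier coefficients, so that $x_t=w_n z$ works for $n$ large. The only cosmetic difference is that the paper bounds $\|e^\perp V_t v\,\widehat{w_n z}\|^2=\sum_g(1-\exp(-2t^2\|c(g)\|^2))\|(w_nz)_g\|_{2,\Tr}^2$ from below directly (choosing the finite set so the factor is $\geq 1/2$ outside it), while you bound the complementary quantity $\|eV_tv\,\widehat{w_nz}\|^2$ from above and subtract; your version in fact gives the slightly sharper statement that this norm tends to $0$, but either way yields the claim.
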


{\it Proof of the claim.} Fix $t > 0$. We prove that we can take $x_t = w_n z$ for $n$ large enough. Since $c : \Gamma \to H_\R$ is proper, we can take a finite subset $\cI \subset \Gamma$ such that $1-\exp(-t^2 \|c(g)\|^2) \geq 1/2$ for all $g \in \Gamma - \cI$. By \eqref{eq.convergence}, we can take $n$ large enough such that
$$\sum_{g \in \cI} \|E_{\core(B)} ((w_n z)_g) \|_{2, \Tr}^2 < \frac{1}{2} \|z\|_{2,\Tr}^2  \; .$$
It follows that
$$\sum_{g \in \Gamma - \cI} \|E_{\core(B)} ((w_n z)_g) \|_{2, \Tr}^2 > \frac{1}{2} \|z\|_{2,\Tr}^2 \; .$$
Put $x_t := w_n z$. A direct computation yields
\begin{align*}
\|e^\perp V_t v \, \widehat{w_n z}\|^2_{\widetilde{\cH}} &= \sum_{g \in \Gamma} (1 - \exp(-t^2 \|c(g)\|^2)) \|E_{\core(B)}((w_n z)_g)\|^2_{2, \Tr} \\
&\geq \sum_{g \in \Gamma - \cI} (1 - \exp(-t^2 \|c(g)\|^2)) \|E_{\core(B)}((w_n z)_g)\|^2_{2, \Tr} \\
&\geq \sum_{g \in \Gamma - \cI} \frac{1}{2} \|E_{\core(B)}((w_n z)_g)\|^2_{2, \Tr} > \frac{1}{4} \|z\|_{2,\Tr}^2 \; .
\end{align*}
This proves the claim.

We denote $\xi_t = e^\perp V_t v \widehat{x_t} \in \widetilde{\cH} \ominus \cH$. We now use Lemma \ref{uniform-convergence} and the fact that for all $u \in F$, the elements $uz$ and $w_n$ commute. We also use that $\core(M)$ and $J \core(M) J$ commute with the projection $e$.
So, we find for all $u \in F$ that
\begin{align*}
\limsup_{t \to 0} \|\varphi_i(u z) \, J\varphi_i(u z)J \, \xi_t - \xi_t\|_{\widetilde{\cH}} & = \limsup_{t \to 0} \|e^\perp(\varphi_i(u z) \, J\varphi_i(u z)J \, V_t v \widehat{x_t} - V_t v \widehat{x_t})\|_{\widetilde{\cH}}\\
& \leq  \limsup_{t \to 0} \|\varphi_i(u z) J \varphi_i(u z) JV_t v \widehat{x_t} - V_t v \widehat{x_t}\|_{\widetilde{\cH}} \\
& =  \limsup_{t \to 0} \|V_t v \varphi_i(u z) J \varphi_i(u z) J \widehat{x_t} - V_t v \widehat{x_t}\|_{\widetilde{\cH}} \\
& =  \limsup_{t \to 0} \| \varphi_i(u z) x_t \varphi_i(u z)^* - x_t \|_{2,\Tr} \\
& \leq   \limsup_{t \to 0} \|(uz) x_t (u z)^* - x_t\|_{2,\Tr}  + 2 \|(\varphi_i(u z) - u z) p\|_{2, \Tr} \\
& =  2 \|(\varphi_i(u z) - u z) p\|_{2, \Tr}.
\end{align*}
Since $\xi_t \in \cHtil \ominus \cH$ with $\|\xi_t\|_{\cHtil} \geq \delta$ for all $t > 0$, and since $\|(\varphi_i(u z) - u z) p\|_{2, \Tr} \recht 0$ for all $u \in F$, it follows from the above computation that
$$\limsup_i \Bigl\| \sum_{u \in F} \vphi_i(uz) \, J \vphi_i(uz) J \Bigr\|_{\B(\cHtil \ominus \cH)} \geq |F| \; .$$
Since the representation $\pi$ is weakly contained in the regular representation, a combination of \cite[Lemma 1.7]{anan95} and \cite[Lemma 3.5]{vaes-cohomology} shows that the binormal representation
$$\Theta : \core(M) \otimes_{\alg} J \core(M) J \to  \B(\widetilde{\cH} \ominus \cH) : \Theta(a \ot JbJ) = a \, J b J$$
is continuous with respect to the minimal C$^*$-tensor norm $\|\,\cdot\,\|_\minim$ on $\core(M) \ot_\alg J\core(M)J$. We therefore get that
$$\limsup_i \Bigl\| \sum_{u \in F} \vphi_i(uz) \ot J \vphi_i(uz) J \Bigr\|_\minim \geq \limsup_i \Bigl\| \sum_{u \in F} \Theta(\vphi_i(uz) \ot J \vphi_i(uz) J ) \Bigr\|_{\B(\cHtil \ominus \cH)} \geq |F| \; .$$
The maps $\vphi_i : \cE \recht \core(M)$ and $\overline{\vphi_i} : J \cE J \recht J \core(M) J : \overline{\vphi_i}(J b J) = J \vphi_i(b) J$ are completely contractive. So we conclude that
$$\Bigl\| \sum_{u \in F} uz \ot J uz J \Bigr\|_\minim \geq \limsup_i \Bigl\| \sum_{u \in F} \vphi_i(uz) \ot J \vphi_i(uz) J \Bigr\|_\minim \geq |F| \; .$$
As explained above, this implies that $p \core(A' \cap M)p$ is amenable for all nonzero finite trace projections $p \in \core(A' \cap M)$. Hence $\core(A' \cap M)$ is amenable and the theorem is proven.
\end{proof}

To prove Theorem \ref{thmC}, we need the following equivalence relation version of \cite[Remark 5.29]{connes76} and \cite[Lemma 2.2]{haagerup}. For completeness, we include a proof.

Let $\cR$ be a countable pmp equivalence relation on the standard probability space $(X,\mu)$. Denote by $m$ the measure on $\cR$ given by
$$m(\cU) = \int_X |\{y \in X \mid (x,y) \in \cU\}| \; \rd\mu(x) = \int_X |\{x \in X \mid (x,y) \in \cU\}| \; \rd\mu(y)$$
for all measurable subsets $\cU \subset \cR$. We denote $M = \rL(\cR)$ and identify $\rL^2(M) = \rL^2(\cR,m)$. To every $\psi$ in the full group $[\cR]$ of $\cR$ corresponds a unitary $u(\psi) \in M$ whose action on $\rL^2(\cR,m)$ is given by $(u(\psi)\xi)(x,y) = \xi(\psi^{-1}(x),y)$. We denote by $J$ the canonical anti-unitary involution on $\rL^2(\cR,m)$ given by $(J \xi)(x,y) = \overline{\xi(y,x)}$.

We view $\rL^\infty(\cR)$ as acting on $\rL^2(\cR,m)$ by multiplication operators. Note that the unitaries $u(\psi)$, $\psi \in [\cR]$, normalize $\rL^\infty(\cR)$ and that $\rL^\infty(X) \subset \rL^\infty(\cR)$, by identifying a function $F \in \rL^\infty(X)$ with the function on $\cR$ given by $(x,y) \mapsto F(x)$.
Recall from \cite[Definition 5]{connes-feldman-weiss} that $\cR$ is called \emph{amenable} if there exists a state $\Om$ on $\rL^\infty(\cR)$ satisfying
\begin{align*}
& \Om(F) = \int_X F(x) \; \rd\mu(x) \;\;\text{for all}\;\; F \in \rL^\infty(X) \quad\text{and}\\
& \Om(u(\psi) F u(\psi)^*) = \Om(F) \;\;\text{for all}\;\; \psi \in [\cR], F \in \rL^\infty(\cR) \; .
\end{align*}
By \cite[Theorem 10]{connes-feldman-weiss}, a countable pmp equivalence relation is amenable if and only if it is hyperfinite.

\begin{lem} \label{lem.crit-amen}
The countable pmp equivalence relation $\cR$ is amenable if and only if for all non-negligible $\cR$-invariant measurable subsets $\cU \subset X$ and all $\psi_1,\ldots,\psi_n \in [\cR]$, we have
\begin{equation}\label{eq.haagerup-equiv}
\Bigl\| \sum_{k=1}^n u(\psi_k) \one_\cU \ot J u(\psi_k) \one_\cU J \Bigr\|_\minim = n \; .
\end{equation}
\end{lem}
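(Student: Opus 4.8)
The plan is to read the statement as the equivalence-relation translation of Connes's characterization of amenability, via the identification that $\cR$ is amenable if and only if $M:=\rL(\cR)$ is amenable (= injective), which is \cite[Theorem 10]{connes-feldman-weiss}. Throughout I would use that, since $\cR$ is pmp, $M$ is a finite von Neumann algebra with canonical trace $\tau$, that $\rL^2(\cR,m)=\rL^2(M,\tau)$ with $J$ as in the statement, that $\cZ(M)=\rL^\infty(X)^\cR$ so that the central projections of $M$ are exactly the $\one_\cU$ with $\cU\subset X$ an $\cR$-invariant Borel set, and that for such $\cU$ and $\psi\in[\cR]$ one has $\psi(\cU)=\cU$, so $u(\psi)\one_\cU$ is a unitary of the corner $\one_\cU M\one_\cU$ with $u(\psi)\one_\cU u(\psi)^*=\one_\cU$.

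For the forward implication I would invoke that $M$ is injective if and only if the binormal representation $\Theta\colon M\odot JMJ\to\B(\rL^2M)$, $a\ot JbJ\mapsto a\,JbJ$, is continuous for the minimal C$^*$-tensor norm (the content of \cite{connes76} in the factor case and of \cite[Lemma 2.2]{haagerup} in general). Granting this, for an invariant $\cU$ and $\psi_1,\dots,\psi_n\in[\cR]$ the left-hand side of \eqref{eq.haagerup-equiv} equals $\bigl\|\sum_k u(\psi_k)\one_\cU\,Ju(\psi_k)\one_\cU J\bigr\|_{\B(\rL^2M)}$; this operator norm is $\le n$ because each summand is a partial isometry, and $\ge n$ because the unit vector $\mu(\cU)^{-1/2}\,\one_\cU\hat1$ is fixed by every summand, using $\psi_k(\cU)=\cU$ to get $u(\psi_k)\one_\cU\,Ju(\psi_k)\one_\cU J\,\one_\cU\hat1=\widehat{u(\psi_k)\one_\cU u(\psi_k)^*}=\widehat{\one_\cU}$.

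For the converse the idea is to produce the Connes--Feldman--Weiss invariant mean on $\rL^\infty(\cR)$. Assuming \eqref{eq.haagerup-equiv} for all invariant $\cU$ and all finite families, take $\cU=X$: a Hahn--Banach argument applied to $\bigl\|\sum_{\psi\in F}u(\psi)\ot Ju(\psi)J\bigr\|_\minim=|F|$ (each summand being a unitary, and $\mathrm{id}\in F$) produces, for every finite $F\subset[\cR]$, a state $\sigma_F$ on $M\ot_\minim JMJ$ with $\sigma_F(u(\psi)\ot Ju(\psi)J)=1$ for $\psi\in F$; as these solution sets are nonempty, weak-$*$ compact and downward directed in $F$, one gets a single state $\sigma$ with $\sigma(u(\psi)\ot Ju(\psi)J)=1$ for all $\psi\in[\cR]$. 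Since the normal states of $M\ovt JMJ$ restrict to a weak-$*$ dense subset of the state space of $M\ot_\minim JMJ$, I would approximate $\sigma$ by vector states of the coarse correspondence, i.e.\ find unit vectors $\eta_i\in\rL^2(M)\ot\rL^2(M)$ with $\langle(u(\psi)\ot Ju(\psi)J)\eta_i,\eta_i\rangle\to1$, hence (Cauchy--Schwarz, each $u(\psi)\ot Ju(\psi)J$ being unitary) $\|u(\psi)\eta_iu(\psi)^*-\eta_i\|\to0$ for all $\psi\in[\cR]$. Embedding $\rL^\infty(\cR)\subset\B(\rL^2M)$ by multiplication and setting $\Om_i(F)=\langle(F\ot1)\eta_i,\eta_i\rangle$, a short computation in which conjugation of $F$ by $u(\psi)$ on the first leg is traded, modulo $\eta_i$, for conjugation by $Ju(\psi)J$ on the second leg — which commutes with $F\ot1$ — gives $\Om_i(u(\psi)Fu(\psi)^*)-\Om_i(F)\to0$. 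Any weak-$*$ cluster point $\Om$ of $(\Om_i)$ is then a $[\cR]$-invariant state on $\rL^\infty(\cR)$.

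The remaining point, which I expect to be the hardest, is to arrange that $\Om$ restricts to $\int\,\cdot\,\rd\mu$ on $\rL^\infty(X)$, as demanded by \cite[Definition 5]{connes-feldman-weiss}. Here $\Om|_{\rL^\infty(X)}$ is automatically $[\cR]$-invariant (it equals $\sigma|_{\rL^\infty(X)\ot1}$, invariant under conjugation by the $u(\psi)$), so the difficulty is only to exclude a singular part. I would treat this by first choosing, over a partition of $X$ into invariant pieces, the states $\sigma_\cU$ supplied by \eqref{eq.haagerup-equiv} and combining them, reducing to the case where $\Om|_{\rL^\infty(X)}$ has nonzero normal part, and then replacing $\mu$ by the equivalent $[\cR]$-invariant probability measure determined by that normal part — legitimate because $\rL(\cR)$, hence amenability of $\cR$, depends only on the measure class of $\mu$. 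Once $\Om|_{\rL^\infty(X)}=\mu$ is in place, $\Om$ is a Connes--Feldman--Weiss mean and $\cR$ is amenable by \cite[Theorem 10]{connes-feldman-weiss}. I expect the delicate step to be precisely showing the normal part is nonzero — equivalently, that the $\eta_i$ can be taken with $\langle x\eta_i,\eta_i\rangle\to\tau(x)$ — since the raw almost-central vectors control only conjugation by the $u(\psi)$ and not the left/right trace values.
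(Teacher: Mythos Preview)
Your forward implication is fine. For the converse you have a genuine gap, which you yourself flag: you produce a $[\cR]$-invariant state $\Om$ on $\rL^\infty(\cR)$ but give no argument that $\Om|_{\rL^\infty(X)}=\mu$. The fix you sketch --- pass to the normal part of $\Om|_{\rL^\infty(X)}$ and replace $\mu$ by the corresponding invariant measure --- does not work as stated: nothing in your construction prevents $\Om|_{\rL^\infty(X)}$ from being purely singular (your $\Om_i$ are normal, but a weak-$*$ cluster point of normal states need not be), and your ``partition into invariant pieces'' gives no leverage when $\cR$ is ergodic. More fundamentally, your converse uses the hypothesis \eqref{eq.haagerup-equiv} only for $\cU=X$, whereas the full strength of the hypothesis --- testing against \emph{all} central projections $\one_\cU$ of $\rL(\cR)$ --- is exactly what is needed to pin down $\Om$ on the center.

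The paper proceeds in two steps. First, invoking \cite[Lemma 2.2]{haagerup} with all central projections $\one_\cU$ in play, one obtains a $[\cR]$-invariant state $\Om$ on $\rL^\infty(\cR)$ that already agrees with $\mu$ on $\rL^\infty(X)^\cR=\cZ(\rL(\cR))$. Second --- and this is the actual content of the lemma --- the paper proves by a direct measure-theoretic argument that any $[\cR]$-invariant mean $\Psi$ on $\rL^\infty(X)$ agreeing with $\mu$ on $\cR$-invariant sets must equal $\mu$ everywhere. This is done by reducing to the homogeneous type \tI$_n$ and type \tIIone\ cases. In type \tI$_n$ one uses a fundamental domain and $n$ elements of $[\cR]$ enumerating each orbit to compare $\Psi$ and $\mu$ on arbitrary sets via their invariant saturations. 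In type \tIIone\ one uses the disintegration over $\rL^\infty(X)^\cR$ to realize $(X,\mu)$ as $[0,1]\times Y$, observes that every measurable set is $[\cR]$-equivalent to a ``subgraph'' set $\{(t,y):0\le t\le G(y)\}$, and approximates the latter by unions of horizontal strips that can again be shuffled by elements of $[\cR]$ into invariant sets. This upgrading step has no analogue in your proposal and is where the difficulty genuinely lives.
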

\begin{proof}
With exactly the same argument as in \cite[Lemma 2.2]{haagerup}, the validity of \eqref{eq.haagerup-equiv} for all $\cR$-invariant measurable subsets $\cU \subset X$ and all $\psi_1,\ldots,\psi_n \in [\cR]$, is equivalent with the existence of a state $\Om$ on $\rL^\infty(\cR)$ satisfying
\begin{align*}
& \Om(F) = \int_X F(x) \; \rd\mu(x) \;\;\text{for all}\;\; F \in \rL^\infty(X)^\cR \quad\text{and}\\
& \Om(u(\psi) F u(\psi)^*) = \Om(F) \;\;\text{for all}\;\; \psi \in [\cR], F \in \rL^\infty(\cR) \; ,
\end{align*}
where we use the notation $\rL^\infty(X)^\cR$ to denote the von Neumann algebra of $\cR$-invariant bounded measurable functions. To prove the lemma, it therefore suffices to show that such a state $\Om$ automatically satisfies $\Om(\one_\cV) = \mu(\cV)$ for all measurable subsets $\cV \subset X$. Denote by $\Psi$ the mean on $X$ given by $\Psi(\cV) = \Om(\one_\cV)$. It follows that $\Psi$ coincides with $\mu$ on the measurable $\cR$-invariant subsets $\cV \subset X$ and that $\Psi$ is invariant under $[\cR]$. We need to prove that $\Psi$ coincides with $\mu$ on all measurable subsets of $X$.

We first prove this statement when $\cR$ is homogeneous of type \tI$_n$, $1 \leq n < \infty$, resp.\ when $\cR$ is homogeneous of type \tIIone.
Assume that $\cR$ is of type \tI$_n$. Then $\cR$ admits a fundamental domain $\cV \subset X$ of measure $1/n$. We can choose $\psi_1,\ldots,\psi_n \in [\cR]$ such that for a.e.\ $x \in \cV$, the equivalence class of $x$ is given by $\{\psi_1(x),\ldots,\psi_n(x)\}$. Note that the subsets $(\psi_k(\cV))_{k=1}^n$ form a partition of $X$ up to measure zero. If $\cU \subset \cV$ is a measurable subset, the sets $\psi_k(\cU)$, $k=1,\ldots,n$, are disjoint and their union is $\cR$-invariant. Hence,
$$n \mu(\cU) = \mu(\cup_{k=1}^n \psi_k(\cU)) = \Psi(\cup_{k=1}^n \psi_k(\cU)) = \sum_{k=1}^n \Psi(\psi_k(\cU)) = n \Psi(\cU) \; .$$
So, $\mu(\cU) = \Psi(\cU)$ for all $\cU \subset \cV$. By $\psi_k$-invariance, also $\mu(\cU) = \Psi(\cU)$ for all $\cU \subset \psi_k(\cV)$. By finite additivity, $\mu(\cU) = \Psi(\cU)$ for all $\cU \subset X$.

Next assume that $\cR$ is of type \tIIone. Denote by $E : \rL^\infty(X) \recht \rL^\infty(X)^\cR$ the trace preserving conditional expectation. Recall that if $\cU,\cV \subset X$ are measurable subsets, then the following two statements are equivalent.
\begin{itemize}
\item There exists a $\psi \in [\cR]$ such that $\psi(\cU) = \cV$.
\item We have $E(\one_\cU) = E(\one_\cV)$.
\end{itemize}
Write $\rL^\infty(X)^\cR = \rL^\infty(Y,\eta)$, where $(Y,\eta)$ is a standard probability space. We can then write $(X,\mu)$ as the direct integral over $(Y,\eta)$ of a measurable field of standard probability spaces. Since $\cR$ is of type \tIIone, almost all these probability spaces are non-atomic and hence isomorphic with the interval $[0,1]$ equipped with the Lebesgue measure. So we find a measure preserving isomorphism of probability spaces $\theta : [0,1] \times Y \recht X$ such that
\begin{itemize}
\item $F(\theta(t,y)) = F(y)$ for all $F \in \rL^\infty(Y) = \rL^\infty(X)^\cR$ and a.e.\ $(t,y) \in [0,1] \times Y$.
\item $(E(F))(y) = \int_0^1 F(\theta(t,y)) \rd t$ for all $F \in \rL^\infty(X)$ and a.e.\ $y \in Y$.
\end{itemize}
Whenever $G : Y \recht [0,1]$ is measurable, we define the measurable set $\cW(G) \subset [0,1] \times Y$ given by
$$\cW(G) = \{(t,y) \mid 0 \leq t \leq G(y) \} \; .$$
We put $\cV(G) = \theta(\cW(G))$. By construction, $E(\one_{\cV(G)}) = G$. Applying this construction to $G = E(\one_\cU)$ for a given measurable subset $\cU \subset X$, it follows that $E(\one_{\cV(G)}) = E(\one_{\cU})$. Hence, there exists a $\psi \in [\cR]$ such that $\psi(\cU) = \cV(G)$. Since both $\mu$ and $\Psi$ are invariant under $[\cR]$, it suffices to prove that $\mu$ and $\Psi$ coincide on the sets of the form $\cV(G)$ for any measurable function $G : Y \recht [0,1]$.

Choose such a measurable function $G : Y \recht [0,1]$ and fix an integer $n \geq 1$. Put $\cU = \cV(G)$. We will prove that $|\mu(\cU) - \Psi(\cU)| \leq 1/n$. Define the following partition of $Y$ into measurable subsets $(Y_k)_{k=1}^n$.
\begin{align*}
Y_k &= \Bigl\{y \in Y \;\Big|\; \frac{k-1}{n} \leq G(y) < \frac{k}{n} \Bigr\} \;\;\text{for}\;\; 1 \leq k \leq n-1 \;\;\text{and}\\
Y_n &= \Bigl\{y \in Y \;\Big|\; \frac{n-1}{n} \leq G(y) \leq 1 \Bigr\} \; .
\end{align*}
Put $\cU_k := \theta([0,1] \times Y_k)$ and note that $(\cU_k)_{k=1}^n$ is a partition of $X$ into $\cR$-invariant subsets. By finite additivity, it suffices to prove that $|\mu(\cU \cap \cU_k) - \Psi(\cU \cap \cU_k)| \leq \mu(\cU_k) / n$ for all $k=1,\ldots,n$. Fix $k \in \{1,\ldots,n\}$.

Define the partition $(\cV_i)_{i=1}^n$ of $\cU_k$ into the measurable subsets given by
$$\cV_i = \theta\Bigl( \Bigl[ \frac{i-1}{n},\frac{i}{n}\Bigr) \times Y_k \Bigr) \;\;\text{for $1 \leq i \leq n-1$, and}\;\; \cV_n = \theta\Bigl( \Bigl[ \frac{n-1}{n},1\Bigr] \times Y_k \Bigr) \; .$$
By construction, we have
\begin{equation} \label{eq.tussenstap}
\cup_{i=1}^{k-1} \cV_i \subset \cU \cap \cU_k \subset \cup_{i=1}^k \cV_i \; .
\end{equation}
By construction, we also have that $E(\one_{\cV_i}) = \frac{1}{n} \one_{\cU_k}$ for all $i=1,\ldots,n$. So, we can take $\psi_1,\ldots,\psi_n \in [\cR]$ such that $\psi_i(\cV_1) = \cV_i$, up to measure zero. We get that
$$n \mu(\cV_1) = \mu(\cup_{i=1}^n \cV_i) = \mu(\cU_k) = \Psi(\cU_k) = \sum_{i=1}^n \Psi(\psi_i(\cV_1)) = n \Psi(\cV_1) \; ,$$
so that $\mu(\cV_1) = \Psi(\cV_1) = \mu(\cU_k) / n$ and hence $\mu(\cV_i) = \Psi(\cV_i) = \mu(\cU_k)/n$ for all $i$. It then follows from \eqref{eq.tussenstap} that both $\mu(\cU \cap \cU_k)$ and $\Psi(\cU \cap \cU_k)$ lie between $(k-1) \mu(\cU_k) / n$ and $k \mu(\cU_k) / n$. This proves the required inequality $|\mu(\cU \cap \cU_k) - \Psi(\cU \cap \cU_k)| \leq \mu(\cU_k) / n$.
This concludes the proof in the case where $\cR$ is of type \tIIone.

Finally, partition $X$ into $\cR$-invariant measurable subsets $\cV$ and $(\cV_n)_{n=1}^\infty$ such that $\cR_{|\cV}$ is of type \tIIone\ and $\cR_{|\cV_n}$ is of type \tI$_n$. By the paragraphs above, we know that $\mu(\cU) = \Psi(\cU)$ if $\cU$ is a measurable subset of one of the $\cV_n, \cV$. By finite additivity, the same holds if $\cU$ is a measurable subset of a finite union of $\cV_n,\cV$. Choose $\eps > 0$. Take $n_0$ such that
$$\mu(\cup_{n=n_0+1}^\infty \cV_n) < \eps \; .$$
Since $\mu$ and $\Psi$ coincide on the $\cR$-invariant subsets, also $\Psi(\cup_{n=n_0+1}^\infty \cV_n) < \eps$. Put $\cW = \cV \cup \cup_{n=1}^{n_0} \cV_n$. For every measurable $\cU \subset X$, we have that $|\mu(\cU) - \mu(\cU \cap \cW)| < \eps$ and $|\Psi(\cU) - \Psi(\cU \cap \cW)| < \eps$. Since $\mu(\cU \cap \cW) = \Psi(\cU \cap \cW)$, we get that $|\mu(\cU) - \Psi(\cU)| < 2 \eps$ for all $\eps > 0$ and all measurable subsets $\cU \subset X$. So $\Psi = \mu$ and this concludes the proof of the lemma.
\end{proof}

\begin{proof}[Proof of Theorem~\ref{thmC}]
Fix a nonsingular free action $\Gamma \curvearrowright (X, \mu)$ and a non-negligible subset $\cV \subset X$. We denote by $\cR = \cR(\Gamma \actson X)$ the orbit equivalence relation and we write
$$M = \rL^\infty(X) \rtimes \Gamma = \rL(\cR) \; .$$
Assume that we can decompose $\cV = X_1 \times X_2$ and that we are given recurrent nonsingular equivalence relations $\cS_i$ on $X_i$ such that $\cS_1 \times \cS_2 \subset \cR_{|\cV}$. We must prove that the $\cS_i$ are amenable. By symmetry, it suffices to prove that $\cS_2$ is amenable.

Write $P_i = \rL(\cS_i)$ and $e = \one_\cV$. We have $P_1 \ovt P_2 \subset eMe$ and this inclusion is with expectation, since every inclusion coming from a subequivalence relation is with expectation. In particular, $P_i \subset eMe$ is with expectation for every $i=1,2$. Since $\cS_1$ is recurrent, the von Neumann algebra $P_1$ has no type \tI\ direct summand. Since $\rL^\infty(X)$ is of type \tI, it follows that $p P_1 p \npreceq_M \rL^\infty(X)$ for every nonzero projection $p \in P_1$.

If $\Gamma$ would be exact, it would now follow immediately from Theorem \ref{thmB} that $P_2 \subset P_1' \cap M$ is amenable. We now no longer assume that $\Gamma$ is exact and we therefore only get the following. Consider the Maharam extension $\Gamma \actson X \times \R$ and denote the associated orbit equivalence relation by $\core(\cR) = \cR(\Gamma \actson X \times \R)$. We equip $X \times \R$ with the canonical infinite $\Gamma$-invariant measure. Then $\core(\cR)$ is a \tIIinfty\ relation and we canonically have $\core(\cS_2) \subset e \core(\cR) e$. We also have $\core(M) = \rL(\core(\cR))$ and $\core(P_2) = \rL(\core(\cS_2))$.

Choose an arbitrary finite measure subset $\cU \subset \cV \times \R$ and denote by $p = \one_\cU$ the corresponding projection in $\rL^\infty(X \times \R)$.
We consider the restricted equivalence relation ${\cS_2}_{|\cU}$ and its full group $\cG := [{\cS_2}_{|\cU}]$. For every $\psi \in \cG$, we have a canonical unitary $u(\psi) \in \core(p P_2 p)$. To prove the amenability of ${\cS_2}_{|\cU}$, it suffices, by Lemma \ref{lem.crit-amen}, to prove that for every projection $z \in \cZ(pP_2 p)$ and all $\psi_1,\ldots,\psi_n \in \cG$, we have
\begin{equation}\label{eq.equiv-aim}
\Bigl\| \sum_{k=1}^n u(\psi_k)z \ot J u(\psi_k)z J \Bigr\|_\minim = n \; .
\end{equation}
Denote by $\cE \subset P_2$ the finite dimensional operator space spanned by
$$\{1\} \cup \{u(\psi_k) z \mid k =1,\ldots,n\} \cup \{u(\psi_k)^* z \mid k =1,\ldots,n\} \; .$$
Below we construct a concrete sequence of completely positive contractive maps $\vphi_i : \cE \recht \rL^\infty(X \times \R) \rtimes_\red \Gamma$ such that $\vphi_i(x)p \recht xp$ $*$-strongly for all $x \in \cE$.

Note that the equality \eqref{eq.equiv-aim} is a special case of the equality \eqref{haagerup} that we established in the proof of Theorem \ref{thmB}. Indeed, in the proof of Theorem \ref{thmB} we showed that \eqref{haagerup} holds for all finite trace projections $p \in \core(A' \cap eMe)$, all finite sets of unitaries $F \subset \cU(p \core(A' \cap eMe)p)$ and all projections $z \in p \core(A' \cap eMe) p$ that commute with $F$. In the proof of \eqref{haagerup}, we only used the exactness of $\Gamma$ at one specific place, namely to ensure the a priori existence of  such a sequence of completely positive contractive maps $\vphi_i$, through local reflexivity. So, to conclude the proof of Theorem \ref{thmC}, it suffices to concretely construct the sequence $\vphi_i$.

For every $k = 1,\ldots,n$, we find partitions $(\cU^k_g)_{g \in \Gamma}$ and $(\cV^k_g)_{g \in \Gamma}$ of $\cU$ of such that $\psi_k(y) = g \cdot y$ for a.e.\ $y \in \cU^k_g$ and $\psi_k^{-1}(y) = g \cdot y$ for a.e.\ $y \in \cV^k_g$. Refining these partitions, we can choose an increasing sequence of projections $p_i \in \rL^\infty(\cU)$ such that $p_i \recht p$ strongly and such that both $u(\psi_k) p_i$ and $u(\psi_k)^* p_i$ belong to $\rL^\infty(X \times \R) \rtimes_\alg \Gamma$ for all $k = 1,\ldots,n$ and all $i$. It now suffices to define $\vphi_i(x) = p_i x p_i$.
\end{proof}

\section{Amalgamated free products and HNN extensions}\label{amalgamation}

\subsection{Von Neumann algebra inclusions without trivial corner}

\begin{df}
We say that  an inclusion of von Neumann algebras $A \subset P$ has a \emph{trivial corner} if there exists a nonzero projection $p \in A' \cap P$ such that $Ap = pPp$. If no such projection exists, we say that the inclusion $A \subset P$ has no trivial corner.
\end{df}

We first prove the following elementary lemma.

\begin{lem}\label{lem.triv-corner}
Let $A \subset P$ be an inclusion of von Neumann algebras.
\begin{enumerate}
\item If $A \subset P$ has no trivial corner and $p \in A' \cap P$ and $e \in A$ are projections such that $r = ep$ is nonzero, then the inclusion $rAr \subset rPr$ has no trivial corner either.
\item Let $p_i \in A' \cap P$ and $e_i \in A$ be projections such that the projections $r_i = e_i p_i$ are nonzero and satisfy $\sum_i r_i = 1$. Then $A \subset P$ has a trivial corner if and only if there exists an $i$ such that $r_i A r_i \subset r_i P r_i$ has a trivial corner.
\item Assume that $A$ is purely atomic. Then $A \subset P$ has no trivial corner if and only if $P$ is diffuse.
\item Assume that $A \subset P$ is a maximal abelian subalgebra with expectation. Then $A \subset P$ has a trivial corner if and only if $P$ has a type \tI\ direct summand.
\end{enumerate}
\end{lem}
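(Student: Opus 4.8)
These are formal properties of the notion of trivial corner, and I would establish them in the stated order, each resting on the previous ones (and, for (4), on Lemma~\ref{lem.masa-typeI}). The recurring technical issue is to promote a projection lying in the relative commutant of a \emph{corner} to a projection lying in $A'\cap P$ itself; the difficulty depends on how central the compressing projection is. For (1), write $r=ep$ with $e\in A$ and $p\in A'\cap P$ and record the identities $rAr=(eAe)p$ and $rPr=(ep)P(ep)$, so that $rAr\subseteq rPr$ is obtained from $A\subseteq P$ by first compressing with $p\in A'\cap P$ and then with the projection $ep\in Ap\subseteq pPp$. The first compression is harmless: if $f\in(Ap)'\cap pPp$ witnesses a trivial corner of $Ap\subseteq pPp$, then $(1-p)af=a(1-p)f=0$ for all $a\in A$, so $f$ commutes with $A$; since also $f\le p$, the relation $(Ap)f=f(pPp)f$ becomes $Af=fPf$, so $f$ already witnesses a trivial corner of $A\subseteq P$. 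Thus ``no trivial corner'' descends from $A\subseteq P$ to $Ap\subseteq pPp$.

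The delicate step, and the one I expect to be the main obstacle, is the second compression, by a projection of the \emph{small} algebra: if $N\subseteq M$ has no trivial corner, then neither does $eNe\subseteq eMe$ for a projection $e\in N$ (here $e$ is central for neither algebra). I would let $z\in Z(N)$ be the central support of $e$ and invoke the canonical $*$-isomorphism $\Phi\colon N'\cap M\to(eNe)'\cap eMe$, $\Phi(x)=xe$ --- an isomorphism because $e$ has full central support in $Nz$, surjectivity resting on averaging over a family of partial isometries $u_k\in N$ with $u_k^*u_k\le e$ and $\sum_k u_ku_k^*=1$. Given a trivial corner $f$ of $eNe\subseteq eMe$, put $\tilde f=\Phi^{-1}(f)$, a projection in $N'\cap M$ with $\tilde fe=f$; the same averaging rewrites any $y=\tilde fy\tilde f\in\tilde fM\tilde f$ as $y=\bigl(\sum_{k,l}u_kn_{kl}u_l^*\bigr)\tilde f$ with $n_{kl}\in N$, so that $N\tilde f=\tilde fM\tilde f$. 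Applying this with $N=Ap$, $M=pPp$, $e:=ep$ yields a projection $\tilde f$ witnessing a trivial corner of $Ap\subseteq pPp$; since $\tilde f\le p$, the argument of the first step shows $\tilde f\in A'\cap P$ and $A\tilde f=\tilde fP\tilde f$, which completes (1). The contrapositive of (1), applied to a single index, is exactly the implication ``$\Leftarrow$'' of (2).

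For ``$\Rightarrow$'' of (2), let $q\in A'\cap P$ witness a trivial corner of $A\subseteq P$. The key structural consequence of $Aq=qPq$ is that $q(A'\cap P)q=Z(qPq)=Z(Aq)=Z(A)q$ is abelian; in particular $qp_iq=z_iq$ for some $z_i\in Z(A)^+$. Since $\sum_i r_i=1$ forces $\sum_i qr_iq=q\ne0$, choose $i$ with $r_iq\ne0$ and let $f$ be the left support of $r_iq$, with polar decomposition $r_iq=v|r_iq|$, $vv^*=f\le r_i$, $v^*v=f'\le q$. Using $qp_iq=z_iq$ one computes $|r_iq|=z_i^{1/2}e_iq$ and $v=z_i^{-1/2}e_ip_iq$ (pseudo-inverses taken on the support of $z_i$). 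Then $f\in(r_iAr_i)'\cap r_iPr_i$, and since $f'\le q$ gives $f'Pf'=f'(Aq)f'=\{f'af':a\in A\}$, one obtains $fPf=v(f'Pf')v^*=\{vav^*:a\in A\}$; finally the explicit form of $v$, the identity $f=(p_iqp_i)e_iz_i^{-1}$, and the fact that any $b\in e_iAe_i$ commutes with $p_iqp_i\in A'\cap P$ and with $z_i\in Z(A)$ yield $\{vav^*:a\in A\}=\{bf:b\in e_iAe_i\}=(r_iAr_i)f$. So $f$ witnesses a trivial corner of $r_iAr_i\subseteq r_iPr_i$.

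Part (3) reduces to (2): writing $1=\sum_k f_k$ with $f_k$ minimal projections of the purely atomic algebra $A$ and applying (2) with $e_k=f_k$, $p_k=1$, one has $f_kAf_k=\C f_k$, so $f_kAf_k\subseteq f_kPf_k$ has a trivial corner iff $f_kPf_k$ contains a minimal projection of $P$; an elementary comparison/polar-decomposition argument then shows that some $f_kPf_k$ contains one iff $P$ itself has a minimal projection, i.e.\ iff $P$ is not diffuse. For (4): if $P$ has a type \tI\ direct summand $Pz$, then $z\in Z(P)\subseteq A'\cap P=A$ and Lemma~\ref{lem.masa-typeI}(1) produces a nonzero abelian projection $q\in(Az)'\cap Pz=Az\subseteq A$; then $Aq$ is a maximal abelian subalgebra of the abelian algebra $qPq$, hence $Aq=qPq$, a trivial corner. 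Conversely, a trivial corner $q\in A=A'\cap P$ makes $qPq=Aq$ abelian, so $q$ is a nonzero abelian projection in $P$, whence $Pz$ is of type \tI\ for $z$ the central support of $q$ in $P$.
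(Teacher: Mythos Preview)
Your overall plan is sound and parts (2)--(4) agree with the paper's proof. For (2) the paper is a bit slicker: since both $p_i$ and your $q$ (the paper's $p$) lie in the von Neumann algebra $A'\cap P$, one can take the polar decomposition of $p_iq$ \emph{inside} $A'\cap P$, obtain a projection $q_i\in A'\cap P$ with $Aq_i=q_iPq_i$, and then simply multiply left and right by $e_i$. Your direct polar decomposition of $r_iq=e_ip_iq$ reaches the same endpoint (your $f$ equals the paper's $e_iq_i$) through heavier computation.

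In part (1), your treatment of the second compression (by $e$ in the small algebra $N$) has a gap. You assert that $\Phi\colon N'\cap M\to(eNe)'\cap eMe$, $x\mapsto xe$, is an isomorphism, with surjectivity witnessed by partial isometries $u_k\in N$ satisfying $u_k^*u_k\le e$ and $\sum_ku_ku_k^*=1$. But such a family exists only when the central support $z$ of $e$ in $N$ equals $1$; in general one only gets $\sum_ku_ku_k^*=z$, and $\Phi$ is a surjection with nontrivial kernel. The fix is easy---first cut by $z\in Z(N)\subset N'\cap M$ using the ``first compression'' you already proved, so that $e$ acquires full central support in $Nz$---but the paper avoids this detour altogether. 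Given a trivial corner $q\in(eAe)'\cap ePe$ with $(eAe)q=qPq$, the paper lets $q_1$ be the smallest projection in $A'\cap P$ with $q\le q_1$ (equivalently, the projection onto $\overline{AqH}$), observes that $eq_1=q$ because $(e-q)aq=(e-q)(eae)q=(e-q)q(eae)=0$ for all $a\in A$, and then concludes from
\[
AqA\cdot P\cdot AqA \;=\; A\,(qPq)\,A \;=\; A\,(eAe)q_1\,A \;\subset\; Aq_1
\]
that $q_1Pq_1=Aq_1$, since $q_1$ lies in the weak closure of $AqA$. So the paper produces the witnessing projection in $A'\cap P$ by a direct central-cover construction rather than by averaging over a system of partial isometries.
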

\begin{proof}
(1) It follows immediately from the definition that $Ap \subset pPp$ has no trivial corner. So, we only need to prove that $eAe \subset ePe$ has no trivial corner whenever $A \subset P$ has no trivial corner and $e \in A$ is a nonzero projection. So assume that $q \in A'e \cap ePe$ is a nonzero projection such that $eAeq = q P q$. We will prove that $A \subset P$ has a trivial corner. Denote by $q_1$ the smallest projection in $A' \cap P$ such that $q \leq q_1$. Note that $q = e q_1$. We then get
$$A q A \; P \; A q A = A \; q P q \; A = A \; e A eq \; A = A \; e A eq_1 \; A \subset A q_1 \; .$$
It follows that $q_1 P q_1 \subset A q_1$ and hence $q_1 P q_1 = A q_1$. So, $A \subset P$ has a trivial corner.

(2) If one of the $r_i A r_i \subset r_i P r_i$ has a trivial corner, it follows from (1) that $A \subset P$ has a trivial corner. Conversely assume that $A \subset P$ has a trivial corner, i.e.\ $Ap = p Pp$ for a nonzero projection $p \in A' \cap P$. Take $i$ such that $r_i p \neq 0$. Denote by $q_i$, resp.\ $v_i \in A' \cap P$ the left support projection, resp.\ polar part of $p_i p$. Since $r_i p \neq 0$, we have $e_i q_i \neq 0$.
From the assumption that $Ap = pPp$, it follows that
$$A q_i = v_i \; Ap \; v_i^* = v_i P v_i^* = q_i P q_i \; .$$
Multiplying on the left and on the right by $e_i$, we get that $e_i A e_i q_i = e_i q_i P e_i q_i$, implying that $r_i A r_i \subset r_i P r_i$ has a trivial corner.

(3) Writing $1$ as a sum of minimal projections in $A$, (3) follows immediately from (2), by noticing that $\C 1 \subset P$ has no trivial corner if and only if $P$ is diffuse.

(4) If $A \subset P$ has a trivial corner, we find a nonzero projection $p \in A' \cap P = A$ such that $Ap = pPp$. So, $p$ is a nonzero abelian projection in $P$ and therefore $P$ must have a type \tI\ direct summand. Conversely, assume that $z \in \cZ(P)$ is a nonzero central projection such that $Pz$ is of type \tI. Since $Az \subset Pz$ is a maximal abelian subalgebra with expectation, Lemma \ref{lem.masa-typeI} provides a nonzero projection $p \in Az$ such that $p$ is abelian in $Pz$. In particular, $pPp = Ap$, so that $A \subset P$ has a trivial corner.
\end{proof}

The following lemma follows immediately from the classification of measure preserving factor maps $(X,\mu) \recht (Y,\eta)$ between standard probability spaces, as proven in \cite[Section 4]{Ro47}. Indeed, the corresponding inclusion $\rL^\infty(Y) \subset \rL^\infty(X)$ has no trivial corner if and only if the disintegration of $\mu$ w.r.t.\ $(Y,\eta)$ is almost everywhere nonatomic, which precisely means that there exists a measure preserving isomorphism $(X,\mu) \cong (Y \times [0,1],\nu \times \text{\rm Lebesgue})$ that respects the factor maps onto $(Y,\eta)$.

\begin{lem}\label{lem.abelian-triv-corner}
Let $(B,\tau)$ be an abelian von Neumann algebra with separable predual and faithful normal trace $\tau$. Let $A \subset B$ be a von Neumann subalgebra. Then the following two statements are equivalent.
\begin{itemize}
\item The inclusion $A \subset B$ has no trivial corner.
\item There exists a trace preserving $*$-isomorphism $\pi : B \recht A \ovt \rL \Z$ such that $\pi(A) = A \ot 1$.
\end{itemize}
\end{lem}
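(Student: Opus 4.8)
The plan is to reduce everything to the classification of measure preserving factor maps of standard probability spaces and then apply Rokhlin's theorem \cite{Ro47}. First I would use that $B$ is abelian with separable predual and faithful normal trace to fix an identification $B = \rL^\infty(X,\mu)$ with $(X,\mu)$ a standard probability space and $\tau(f) = \int_X f \,\rd\mu$. Being generated by countably many projections, the von Neumann subalgebra $A$ corresponds to a measurable factor map $\pi : (X,\mu) \recht (Y,\eta)$ onto a standard probability space with $\eta = \pi_*\mu$, in such a way that $A = \{f\circ\pi \mid f \in \rL^\infty(Y)\}$ and $\tau|_A$ becomes integration against $\eta$. Since $B$ is finite, the $\tau$-preserving conditional expectation $E_A : B \recht A$ exists; disintegrating $\mu = \int_Y \mu_y \,\rd\eta(y)$ into probability measures $\mu_y$ concentrated on the fibres $\pi^{-1}(y)$, one has $E_A(f)(y) = \int_X f \,\rd\mu_y$.

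The key step is to prove the equivalence
\[
A \subset B \text{ has no trivial corner} \quad\Longleftrightarrow\quad \mu_y \text{ is nonatomic for } \eta\text{-almost every } y .
\]
As $B$ is abelian, $A' \cap B = B$, so a trivial corner is exactly a measurable $W \subset X$ with $\mu(W) > 0$ such that $\one_W \rL^\infty(X) \one_W = \{(f\circ\pi)\one_W \mid f \in \rL^\infty(Y)\}$; unwinding this using the uniqueness of measure algebras, it says precisely that $\pi|_W$ is essentially injective, i.e.\ $W$ is a Borel partial section of $\pi$ of positive $\mu$-measure. Now if the set $E = \{y : \mu_y \text{ has an atom}\}$ satisfies $\eta(E) > 0$, a measurable selection argument lets me choose, for $y \in E$, an atom $x(y)$ of $\mu_y$; then $W = \{x(y) : y \in E\}$ is a partial section with $\mu(W) = \int_E \mu_y(\{x(y)\})\,\rd\eta(y) > 0$, so $A \subset B$ has a trivial corner. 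Conversely, a positive-measure partial section $W$ meets $\pi^{-1}(y)$ in a single point $w(y)$ for $y$ in a set of positive $\eta$-measure and has $\mu(W) = \int \mu_y(\{w(y)\})\,\rd\eta(y)$, forcing $\mu_y$ to have an atom on a set of positive $\eta$-measure. This proves the displayed equivalence.

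To finish, suppose first that $A \subset B$ has no trivial corner, so that $\mu_y$ is nonatomic for $\eta$-almost every $y$. By the classification of factor maps of standard probability spaces \cite{Ro47}, there is a measure preserving isomorphism $(X,\mu) \cong (Y \times [0,1], \eta \times \mathrm{Leb})$ carrying $\pi$ to the first-coordinate projection. Composing with a fixed trace preserving isomorphism $\rL^\infty([0,1],\mathrm{Leb}) \cong \rL\Z$ (for instance via the Fourier isomorphism $\rL\Z \cong \rL^\infty(\mathbb{T})$, the canonical trace of $\rL\Z$ corresponding to integration), one obtains a trace preserving $\ast$-isomorphism $B = \rL^\infty(X) \recht \rL^\infty(Y) \ovt \rL\Z = A \ovt \rL\Z$ sending $A$ onto $A \ot 1$. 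For the converse, if such an isomorphism exists then $A \subset B$ is conjugate to $A \ot 1 \subset A \ovt \rL\Z \cong \rL^\infty(Y \times [0,1], \eta \times \mathrm{Leb})$, and any trivial corner would be a positive-measure Borel partial section $W$ of $Y\times[0,1] \recht Y$; but then the slices $W_y$ would simultaneously have positive Lebesgue measure on a set of positive $\eta$-measure and consist of a single point, which is absurd. Hence $A \subset B$ has no trivial corner.

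I expect the only genuinely delicate ingredients to be the invocation of Rokhlin's classification theorem and the measurable selection of atoms in the fibres; everything else is routine bookkeeping with disintegration and with the dictionary between abelian von Neumann algebras and standard probability spaces.
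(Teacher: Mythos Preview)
Your proposal is correct and follows exactly the approach the paper indicates: the paper does not give a formal proof of this lemma but simply remarks that it follows from Rokhlin's classification of measure preserving factor maps \cite{Ro47}, via the observation that $A\subset B$ has no trivial corner if and only if the fibre measures in the disintegration of $\mu$ over $(Y,\eta)$ are almost everywhere nonatomic. You have spelled out precisely this argument, including the details of the ``trivial corner $\Leftrightarrow$ positive-measure partial section $\Leftrightarrow$ atomic fibres'' equivalence and the easy converse direction, which the paper leaves implicit.
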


Let $A \subset P$ be an inclusion of finite von Neumann algebras. It is easy to see that $P \preceq_P A$ if and only if there exists a nonzero projection $p \in A' \cap P$ such that $Ap \subset pPp$ has finite Jones index \cite{jones-index, pimsner-popa}. Note however that an inclusion $A \subset P$ can be of finite index and yet without trivial corner. For this it suffices to consider an outer action of a finite group $\Lambda$ on the hyperfinite \tIIone\ factor $R$, in which case $R \subset R \rtimes \Lambda$ is a finite index inclusion that is irreducible and hence, without trivial corner.

When $A \subset P$ is an inclusion of semifinite von Neumann algebras with $A$ being of type \tI, the situation is much more clear as we explain now.

\begin{lem}\label{triv-corner-I}
Let $(P,\Tr)$ be a von Neumann algebra with separable predual and a distinguished normal semifinite faithful trace $\Tr$. Assume that $A$ is a type \tI\ von Neumann subalgebra of $P$ such that $\Tr_{|A}$ is semifinite. Denote by $E_A : P \recht A$ the unique trace preserving conditional expectation. Then the following statements are equivalent.
\begin{enumerate}
\item The inclusion $A \subset P$ has no trivial corner.
\item Whenever $e \in P$ and $q \in A$ are projections such that $\Tr(e) < \infty$ and $0 < q \leq e$, then $ePe \npreceq_{ePe} qAq$.
\item Whenever $q \in A$ and $p \in A'q \cap qPq$ are nonzero projections such that $\Tr(q) < \infty$, there exists a unitary $u \in \cU(pPp)$ such that $E_{qAqp}(u^n) = 0$ for all $n \in \Z \setminus \{0\}$.
\end{enumerate}
\end{lem}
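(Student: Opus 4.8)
## Plan of proof for Lemma \ref{triv-corner-I}

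The plan is to prove the three implications $(1)\Rightarrow(2)\Rightarrow(3)\Rightarrow(1)$, exploiting throughout the fact that a type \tI\ von Neumann algebra with semifinite trace decomposes as a direct sum/integral of matrix amplifications of abelian algebras, so that $A$ is, up to isomorphism, a direct sum of pieces $\rL^\infty(Y_n)\ovt \B(\ell^2_n)$ with $n$ ranging over $\{1,2,\dots,\infty\}$.

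First, for $(1)\Rightarrow(2)$, I would argue by contraposition: suppose there are projections $q\leq e$ in $A$ with $\Tr(e)<\infty$ and $ePe\preceq_{ePe} qAq$. Intertwining inside the finite von Neumann algebra $ePe$ (using the equivalences recalled in Section \ref{sec.intertwine}, or directly \cite[Theorem A.1]{Po01}) yields a nonzero projection $f\in (qAq)'\cap q(ePe)q$ with $qAqf$ of finite Jones index in $f(ePe)f$. Since $A$ is of type \tI, the relative commutant $(qAq)'\cap q P q$ is again of type \tI\ with the restricted trace semifinite; I would then find inside $f(ePe)f$ a nonzero abelian projection $p_0$ of $qAqf$, and shrinking further obtain a nonzero projection $p\in A'\cap P$ with $Ap$ of finite index \emph{and} abelian in $pPp$, hence $Ap=pPp$. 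This contradicts $(1)$. The technical point here is to pass from a finite-index corner to an honest trivial corner; the type \tI\ hypothesis is exactly what makes this work, because inside a finite-index inclusion $qAqf\subset f(ePe)f$ with $qAq$ abelian the ambient algebra $f(ePe)f$ is itself finite type \tI, and Lemma \ref{lem.masa-typeI}(2) (after enlarging $qAqf$ to a maximal abelian subalgebra) produces an abelian projection equivalent into $qAqf$.

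Next, $(2)\Rightarrow(3)$: given nonzero projections $q\in A$ and $p\in A'q\cap qPq$ with $\Tr(q)<\infty$, set $e=q$ and note $p\leq e$. By $(2)$ we have $ePe=qPq\npreceq_{qPq} qAqp$? — more precisely I would restrict attention to $pPp$ and the abelian subalgebra $qAqp\subset pPp$ (this is an inclusion of \emph{finite} von Neumann algebras since $\Tr(q)<\infty$), and use $(2)$ together with Lemma \ref{lem.triv-corner} to deduce $pPp\npreceq_{pPp} qAqp$. Then Popa's intertwining theorem in the finite setting gives a net, in fact (since the predual is separable) a sequence, of unitaries $w_k\in\cU(pPp)$ with $E_{qAqp}(x^*w_k y)\to0$ in $\|\cdot\|_2$ for all $x,y$. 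Now I run the standard averaging/maximality trick: I want a \emph{single} unitary $u$ with $E_{qAqp}(u^n)=0$ for all $n\neq0$. One builds it as in \cite[Corollary F.14]{BO08} / Lemma \ref{intertwining-abelian}: construct an increasing sequence of finite-dimensional abelian subalgebras $Q_m\subset pPp$ and unitaries in them whose powers have $E_{qAqp}$-images going to zero, then take a suitable limit unitary in $\bigvee_m Q_m$. Actually the cleanest route is to directly produce a diffuse abelian subalgebra $D\subset pPp$ with $D\npreceq_{pPp} qAqp$ and observe that a Haar unitary $u$ generating $D$ automatically satisfies $E_{qAqp}(u^n)=0$ for all $n\neq 0$, since $u^n$ ranges over a weakly-dense set of unitaries in $D$ (up to the usual $\|\cdot\|_2$-approximation argument, using that $D\npreceq qAqp$ forces the $E_{qAqp}$-image of \emph{every} unitary in $D$, not just a net, to be small — this is where one uses that $D$ is a single copy of $\rL\Z$). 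I expect this to be the main obstacle: upgrading ``$E_{qAqp}(u^n)$ small along a net'' to ``$E_{qAqp}(u^n)=0$ exactly for a fixed $u$ and all $n$'', which requires carefully arranging the diffuse abelian subalgebra so that its standard generator works, rather than just some unitary in it.

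Finally, $(3)\Rightarrow(1)$, again by contraposition: if $A\subset P$ has a trivial corner, pick a nonzero projection $p\in A'\cap P$ with $Ap=pPp$, and by semifiniteness of $\Tr_{|A}$ (and cutting down) arrange $q\in A$ with $\Tr(q)<\infty$ and $0<p\leq q$, so $p\in A'q\cap qPq$. Then for \emph{any} unitary $u\in\cU(pPp)=\cU(Ap)$ we have $u\in Ap=qAqp$, hence $E_{qAqp}(u^n)=u^n\neq0$ for all $n\neq0$ (a unitary is never $0$). So no unitary can satisfy the condition in $(3)$, contradicting $(3)$. This direction is immediate. Assembling the three implications completes the proof.
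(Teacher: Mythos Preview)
Your cycle runs $(1)\Rightarrow(2)\Rightarrow(3)\Rightarrow(1)$, whereas the paper proves $(3)\Rightarrow(2)\Rightarrow(1)\Rightarrow(3)$. This is not merely cosmetic: the difficult implication is the one that produces a \emph{single} unitary with $E_{qAqp}(u^n)=0$ \emph{exactly} for every $n\neq 0$, and your route forces you to extract this from the soft, approximate statement $(2)$. That is where your argument breaks.

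Concretely, in your $(2)\Rightarrow(3)$ you propose to build a diffuse abelian $D\subset pPp$ with $D\npreceq_{pPp} qAqp$ and then take a Haar unitary $u$ generating $D$. But $D\npreceq qAqp$ only gives a \emph{net} of unitaries $w_k\in\cU(D)$ with $\|E_{qAqp}(w_k)\|_2\to 0$; it says nothing about $E_{qAqp}(u^n)$ for a fixed generator $u$. Your parenthetical claim that ``$D\npreceq qAqp$ forces the $E_{qAqp}$-image of \emph{every} unitary in $D$ to be small'' is false: non-intertwining is an asymptotic condition, not a pointwise one. The construction in Lemma~\ref{intertwining-abelian} (or \cite[Corollary~F.14]{BO08}) that you invoke likewise only produces asymptotic vanishing along a sequence, never exact vanishing on all powers of one unitary. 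There is no averaging or limiting procedure that upgrades $\|E_{qAqp}(w_k)\|_2\to 0$ to $E_{qAqp}(u^n)=0$.

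The paper avoids this obstacle entirely by proving $(1)\Rightarrow(3)$ \emph{constructively}. After reducing (via Lemma~\ref{lem.triv-corner}) to finite trace and abelian $A$, one splits $P$ into its type \tI\ and type \tIIone\ parts. On the type \tI\ part, one enlarges $A$ to a maximal abelian $B\subset P$, checks that $A\subset B$ still has no trivial corner, and then invokes the measure-theoretic Lemma~\ref{lem.abelian-triv-corner} (Rohlin disintegration) to get $B\cong A\ovt\rL\Z$, so the canonical Haar unitary of $\rL\Z$ does the job exactly. On the type \tIIone\ part, one again enlarges $A\subset B$ maximal abelian and then builds by hand, using iterated halving of projections in $B$, a copy of $\rL\bigl(\bigoplus_{\N}\Z/2\Z\bigr)\subset P$ that is $E_B$-free (i.e.\ $E_B(x)=\tau(x)1$ on it); any Haar unitary there satisfies $E_A(u^n)=0$ exactly. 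Both constructions yield exact vanishing by design rather than by a limiting argument.

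Two minor remarks on the other directions. In your $(1)\Rightarrow(2)$, the step ``finite-index inclusion of an abelian algebra forces the ambient to be type \tI'' is correct but deserves a line of justification; the paper's $(2)\Rightarrow(1)$ is shorter, just observing that a trivial corner immediately gives an intertwining. In your $(3)\Rightarrow(1)$, you cannot in general find $q\in A$ of finite trace with $p\leq q$; rather, take $q\in A$ of finite trace with $qp\neq 0$ and replace $p$ by $qp$, which still witnesses a trivial corner by Lemma~\ref{lem.triv-corner}.
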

\begin{proof}
We will prove that $3 \Rightarrow 2 \Rightarrow 1 \Rightarrow 3$.

$3 \Rightarrow 2$. Fix projections $e \in P$ and $q \in A$ such that $\Tr(e) < \infty$ and $0 < q \leq e$. To prove that $ePe \npreceq_{ePe} qAq$, we choose a nonzero $ePe$-$qAq$-subbimodule $\cH \subset e \rL^2(P) q$ and we must prove that $\cH$ is of infinite dimension as a right $qAq$-module. Note that $\cH = e \rL^2(P) p$ for some nonzero projection $p \in A'q \cap qPq$. Denote by $z \in \cZ(qAq)$ the support of $E_{qAq}(p)$.
Because (3) holds, we can take a unitary $u \in \cU(pPp)$ such that $E_{qAqp}(u^n) = 0$ for all $n \in \Z \setminus \{0\}$. Define $\cK_n \subset e\rL^2(P)p$ as the $\|\,\cdot\,\|_2$-closure of $\{u^n a \mid a \in qAq\}$. By construction, the $\cK_n$ are orthogonal right $qAq$-submodules of $\cH$ with $\dim_A(\cH_n) = \tau(z)$. Hence, $\cH$ is indeed of infinite dimension as a right $qAq$-module.

$2 \Rightarrow 1$. Assume that $A \subset P$ has a trivial corner. Take a projection $r \in A' \cap P$ such that $rPr = Ar$. Since $\Tr_{|A}$ is semifinite, we can take a projection $e \in A$ such that $\Tr(e) < \infty$ and $e r \neq 0$. By construction, the inclusion $e A e \subset e P e$ has a trivial corner. In particular $ePe \preceq_{ePe} eAe$. So (2) does not hold for the projections $e \in P$ and $q = e$.

$1 \Rightarrow 3$. Fix nonzero projections $q \in A$ and $p \in A'q \cap qPq$ such that $\Tr(q) < \infty$. Since $A \subset P$ has no trivial corner, it follows from Lemma \ref{lem.triv-corner} that also $qAq p \subset p Pp$ has no trivial corner. So, replacing $P$ by $pPp$ and $A$ by $qAq p$, we may assume that $\Tr$ is a finite trace and we only need to prove the existence of a unitary $u \in \cU(P)$ such that
\begin{equation}\label{eq.mijndoel}
E_A(u^n) = 0 \;\;\text{for all}\;\; n \in \Z \setminus \{0\} \; .
\end{equation}
We may further assume that $A$ is abelian. Indeed, since $A$ is of type \tI, we may find an orthogonal family $(e_k)_k$ of abelian projections in $A$ such that $\sum_k e_k = 1$. Assume moreover that we have found unitaries  $u_k \in \cU(e_k P e_k)$ satisfying \eqref{eq.mijndoel} for the inclusion $e_k A e_k \subset e_k P e_k$. Then the unitary $u = \sum_k u_k \in \cU(P)$ satisfies \eqref{eq.mijndoel} for the inclusion $A \subset P$.

From now on, we assume that $A$ is abelian. We decompose $P = P z_{{\rm I}} \oplus P z_{{\rm II}}$, where $P z_{{\rm I}}$ is of type \tI\ and $P z_{{\rm II}}$ is of type \tIIone. So it suffices to prove the existence of $u \in \cU(P)$ satisfying \eqref{eq.mijndoel} separately in the cases where $P$ is of type \tI, resp.\ of type \tIIone. We first assume that $P$ is of type \tI. Choose $A \subset B \subset P$ such that $B$ is maximal abelian in $P$. We claim that the inclusion $A \subset B$ has no trivial corner. Indeed, if $q \in B$ is a nonzero projection and $Aq = Bq$, it follows that $Aq \subset q P q$ is maximal abelian. Since $qPq$ is of type \tI\ and $Aq \subset qPq$ has no trivial corner, this contradicts Lemma \ref{lem.triv-corner}. So the claim is proven. Lemma \ref{lem.abelian-triv-corner} then provides a unitary $u \in B$ such that \eqref{eq.mijndoel} holds.

Finally assume that $P$ is of type \tIIone. Choose again $A \subset B \subset P$ such that $B$ is maximal abelian in $P$. It suffices to find a unitary $u \in \cU(P)$ such that $E_B(u^n) = 0$ for all $n \in \Z \setminus \{0\}$. Since $P$ is of type \tIIone, there exists a projection $q \in P$ such that $q$ and $1-q$ are equivalent projections. By \cite[Corollary F.8]{BO08}, the projection $q$ is equivalent with a projection $p \in B$. Since $P$ is finite, also $1-q$ is equivalent with $1-p$ and we conclude that $p$ is equivalent with $1-p$.

Since $p$ and $1-p$ are equivalent projections in $P$, we can take a self-adjoint unitary $u_1 \in \cU(P)$ such that $u_1 p u_1^* = 1-p$. Note that automatically $E_B(u_1) = 0$. We can repeat the same reasoning for the maximal abelian subalgebra $Bp$ of the type \tIIone\ von Neumann algebra $p P p$, yielding a projection $q \in Bp$ and a self-adjoint unitary $v \in \cU(pPp)$ such that $v q v^* = p - q$. Define the self-adjoint unitary $u_2 \in \cU(P)$ given by $u_2 := v + u_1 v u_1$. Note that $u_1 u_2 = u_2 u_1$ and that $u_1,u_2$ generate a copy of $\rL(\Z/2\Z \times \Z/2\Z)$ inside $P$ with the property that $E_B(x) = \tau(x) 1$ for all $x \in \rL(\Z/2\Z \times \Z/2\Z)$.

We continue the same construction inductively. Writing $G = \bigoplus_\N \Z/2\Z$, we find a copy of $\rL(G)$ inside $P$ such that $E_B(x) = \tau(x)1$ for all $x \in \rL(G)$. Since $\rL(G)$ is a diffuse abelian von Neumann algebra, we can take a unitary $u \in \rL(G)$ such that $\tau(u^n) = 0$ for all $n \in \Z \setminus \{0\}$. So, again \eqref{eq.mijndoel} holds.
\end{proof}

We now prove that under the right assumptions, the property of having no trivial corner is preserved under taking the continuous core.

\begin{prop}\label{core}
Let $A \subset P$ be an inclusion of von Neumann algebras without trivial corner and with expectation. Assume that $A$ is of type \tI. Then the inclusion of continuous cores $\core(A) \subset \core(P)$ has no trivial corner either.
\end{prop}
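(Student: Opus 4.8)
\emph{The plan is to set up the continuous core, reduce (using the type \tI\ structure of $A$) to the situation where $P$ itself is of type \tI, and there obtain a contradiction by hand.}

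First I would fix a faithful normal state $\psi$ on $A$, put $\varphi=\psi\circ E_A$, and use the canonical trace preserving $\ast$-isomorphism of Subsection \ref{flow} to realize $\core(A)=A\rtimes_\psi\R$ as a von Neumann subalgebra of $\core(P)=P\rtimes_\varphi\R$, with canonical semifinite trace $\Tr$, trace preserving conditional expectation $\widehat{E_A}\colon\core(P)\recht\core(A)$ extending $E_A$, and $\Tr_{|\core(A)}$ semifinite. Since $A$ is of type \tI, $\sigma^\psi$ is implemented by a positive operator affiliated with $A$, so $\core(A)\cong A\ovt\rL(\R)$ is again of type \tI. One has to show $\core(A)\subseteq\core(P)$ has no trivial corner; so suppose there is a nonzero projection $f\in\core(A)'\cap\core(P)$ with $\core(A)f=f\core(P)f$. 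Then $f\core(P)f=\core(A)f$ is a corner of the type \tI\ algebra $\core(A)$, hence of type \tI, so the central support of $f$ carries a type \tI\ direct summand of $\core(P)$; as the cores of the type \tII\ and type \tIII\ summands of $P$ are of type \tII, resp.\ \tIIinfty, this forces $P$ to have a nonzero type \tI\ direct summand $Pz$ with $z\in\cZ(P)\subseteq A'\cap P$. Cutting by $z$ (which is $\sigma^\varphi$-fixed, so that $\pi_\varphi(z)\in\core(A)'\cap\core(P)$ and $\pi_\varphi(z)\core(P)\pi_\varphi(z)=\core(Pz)$, $\pi_\varphi(z)\core(A)\pi_\varphi(z)=\core(Az)$) and using Lemma \ref{lem.triv-corner}(2), I may assume $P$ is of type \tI; decomposing $A$ over its center and cutting with matrix units from $A$ then further reduces, by the same lemma, to the case where $A$ is \emph{abelian} while $P$ stays of type \tI. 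Now $\sigma^\psi=\id$ and $\core(A)=A\ovt\rL(\R)$ is abelian, with $A\subseteq P^\varphi$.

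Next, since $A\subseteq P$ has no trivial corner and $P$ is of type \tI, $A$ is not maximal abelian in $P$ (otherwise Lemma \ref{lem.triv-corner}(4) would produce a trivial corner). Enlarge $A$ to a maximal abelian subalgebra $A\subseteq B\subseteq P$ with a $\varphi$-preserving expectation, i.e.\ with $B\subseteq P^\varphi$; such $B$ exists because $P$ is of type \tI\ and $A\subseteq P^\varphi$. Then $A\subseteq B$ has no trivial corner: if $Ap=pBp$ for a nonzero $p\in A'\cap B$, then $Bp=pBp$ is maximal abelian in the type \tI\ algebra $pPp$, and Lemma \ref{lem.triv-corner}(4) applied to $Bp\subseteq pPp$ yields a projection $p'\in Bp=Ap\subseteq A'\cap P$ with $p'Pp'=Bp'=Ap'$, a trivial corner of $A\subseteq P$. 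By Lemma \ref{lem.abelian-triv-corner} there is a trace preserving $\ast$-isomorphism $B\cong A\ovt\rL(\Z)$ carrying $A$ onto $A\otimes1$. As $B\subseteq P^\varphi$ is abelian, $\sigma^\varphi_{|B}=\id$, so $\core(B)=B\ovt\rL(\R)$, equivalently $\core(B)=\core(A)\ovt\rL(\Z)$ with $\core(A)=\core(A)\otimes1$; and using $\lambda_\varphi(\R)'\cap\core(P)=P^\varphi\ovt\rL(\R)$ together with $B'\cap P^\varphi=B$ one gets $\core(B)'\cap\core(P)=B\ovt\rL(\R)=\core(B)$, so $\core(B)$ is a MASA (with expectation) in $\core(P)$. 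To conclude, note that $f$ commutes with $\core(A)\supseteq\lambda_\varphi(\R)\cup\pi_\varphi(A)$, so $f\in(A'\cap P^\varphi)\ovt\rL(\R)$, and that $f\core(P)f=\core(A)f$ is abelian, so $f$ is an \emph{abelian} projection of $\core(P)$. Running the dual action $(\theta_t)$ — each $\theta_t(f)$ is again such a projection, and $\bigvee_t\theta_t(f)$ is $\theta$-fixed, hence of the form $\pi_\varphi(r)$ with $r\in A'\cap P^\varphi$ — one shows that the trivial corner may be taken inside $\core(B)$. Once $f\in\core(B)$, the MASA property gives that $\core(B)f$ is maximal abelian in $f\core(P)f=\core(A)f$; since $\core(A)\subseteq\core(B)$ this forces $\core(A)f=\core(B)f$, i.e.\ (writing $\core(B)=\core(A)\ovt\rL(\Z)$, $\core(A)=\core(A)\otimes1$) every element of $\core(B)$ cut by the nonzero $f$ is already independent of the $\rL(\Z)$-variable — impossible unless $\rL(\Z)=\C$. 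This contradiction proves the proposition.

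The routine-but-careful points are the two reductions in the first paragraph (compatibility of the central and matrix-unit cuts with the formation of the core and with the conditional expectations) and the existence of a maximal abelian $B\supseteq A$ sitting inside $P^\varphi$. The genuine heart of the argument, and the step I expect to be the main obstacle, is the assertion that a trivial corner $f$ of $\core(A)\subseteq\core(P)$ may be taken inside $\core(B)$ — equivalently, that the dual-action saturation $\bigvee_t\theta_t(f)$ is again a trivial corner. This is delicate precisely because a join of trivial-corner projections need not be a trivial-corner projection in general; the point is to exploit that, \emph{after} the reduction to $P$ of type \tI, the projection $f$ is an abelian projection of the type \tI\ algebra $\core(P)$ lying in the explicitly described relative commutant $\core(A)'\cap\core(P)=(A'\cap P^\varphi)\ovt\rL(\R)$, on which $\theta$ acts merely by translation in the $\rL(\R)$-variable, so that the saturation can be controlled fibrewise.
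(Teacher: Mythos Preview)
Your reductions to $P$ of type \tI\ and $A$ abelian are correct and parallel the paper's. The difference begins with the choice of intermediate abelian algebra: you enlarge $A$ to a MASA $B\subset P^\varphi$, while the paper uses $\cZ(A'\cap P)$, which automatically lies in $P^\varphi$ (since $\sigma^\varphi$ preserves $A'\cap P$ and the restriction of $\varphi$ to the abelian algebra $\cZ(A'\cap P)$ is tracial). The advantage of the paper's choice is that $\cZ(A'\cap P)$ lands in the \emph{center} of $R:=\core(A)'\cap\core(P)=(A'\cap P^\varphi)\ovt\rL(\R)$, not merely inside $R$; this makes the step ``push the trivial-corner projection into the small algebra'' essentially a central-support argument.

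Your dual-action saturation, by contrast, has a genuine gap that you correctly flag but do not close. The projection $r\otimes 1:=\bigvee_t\theta_t(f)$ is \emph{not} a trivial-corner projection in general, and your proposed fibrewise control does not salvage it: the condition $\core(A)f=f\core(P)f$ does not decompose over the $\rL(\R)$-variable into statements about $f_s P f_s$, because $\core(P)=P\rtimes_\varphi\R$ is a genuine crossed product (even for $P$ of type \tI, unless $\varphi$ is tracial) and $\pi_\varphi(P)$ does not consist of constant fields. What the fibrewise picture does yield is only $f_s(A'\cap P^\varphi)f_s=Af_s$ for a.e.\ $s$ --- a trivial corner of $A\subset A'\cap P^\varphi$, not of $A\subset P$.

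Your route can be repaired, but not via the dual action. Observe that $R=(A'\cap P^\varphi)\ovt\rL(\R)$ is of type \tI\ (since $P^\varphi$ is of type \tI\ when $P$ is, and the relative commutant of an abelian subalgebra of a type \tI\ algebra is again of type \tI), and that $\core(B)$ is a MASA in $R$. Your $f$ is an abelian projection in $R$ (since $fRf=\core(A)f$ is abelian), so Lemma~\ref{lem.masa-typeI}(2) applied \emph{inside $R$} gives a partial isometry $v\in R$ with $v^*v=f$ and $g:=vv^*\in\core(B)$. Because $v\in R=\core(A)'\cap\core(P)$, conjugation by $v$ preserves $\core(A)$, so $g$ is again a trivial corner of $\core(A)\subset\core(P)$ and now lies in $\core(B)$; your final paragraph then goes through verbatim. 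This is the missing ingredient, and it replaces the dual-action step entirely.
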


\begin{proof}
Since the continuous core of a type \tI\ von Neumann algebra is of type \tI\ and the one of a von Neumann algebra of type \tII\ or \tIII\ is always of type \tII, the only nontrivial case to consider, is the one where $P$ is a type \tI\ von Neumann algebra.

Let $(p_i)_{i \in J}$ be a family of abelian projections in the type \tI\ von Neumann algebra $A$, with $\sum_i p_i = 1$. The inclusion $\core(p_i A p_i) \subset \core(p_i P p_i)$ can be identified with the inclusion $p_i \core(A) p_i \subset p_i \core(P) p_i$. By Lemma \ref{lem.triv-corner}, it suffices to prove that for all $i \in J$, the inclusion $\core(p_i A p_i) \subset \core(p_i P p_i)$ has no trivial corner.

Fix $i \in J$. Replacing $A$ by $p_i A p_i$ and replacing $P$ by $p_i P p_i$, we may therefore assume that $A$ is abelian. We claim that $A \subset A' \cap P$ has no trivial corner. Otherwise, we would find a nonzero projection $p \in A' \cap P$ such that $Ap = p(A' \cap P)p$, which would mean that $Ap \subset pPp$ is maximal abelian and hence would contradict Lemma \ref{lem.triv-corner}.

Since $A \subset P$ is with expectation, also $A' \cap P \subset P$ is with expectation. The inclusion $\cZ(A' \cap P) \subset A' \cap P$ is with expectation and hence, also the inclusion $\cZ(A' \cap P) \subset P$ is with expectation. We then canonically get
$$\core(A) \subset \core(\cZ(A' \cap P)) \subset \core(P)$$
and it suffices to prove that $\core(A) \subset \core(\cZ(A' \cap P))$ has no trivial corner. Since $\cZ(A' \cap P)$ is abelian, this last inclusion equals $A \ovt \rL(\R) \subset \cZ(A' \cap P) \ovt \rL(\R)$. It therefore suffices to prove that $A \subset \cZ(A' \cap P)$ has no trivial corner.
Assume the contrary and take a nonzero projection $p \in \cZ(A' \cap P)$ such that $Ap = \cZ(A' \cap P)p$. Since $A$ is abelian and $A \subset P$ is with expectation, it follows from Lemma \ref{lem.masa-typeI} that $A' \cap P$ is of type \tI. Therefore, $A' \cap P$ admits an abelian projection $e$ with $ep \neq 0$. Then $e(A' \cap P)e = \cZ(A' \cap P)e$ and we conclude that $A ep = ep P ep$, contradicting the assumption that $A \subset P$ has no trivial corner.
\end{proof}

\subsection{Applications to amalgamated free products von Neumann algebras}

For $i = 1, 2$, let $B \subset M_i$ be an inclusion of von Neumann algebras with expectation $E_i : M_i \to B$. Recall that the \emph{amalgamated free product} $(M, E) = (M_1, E_1) \ast_ B (M_2, E_2)$ is the von Neumann algebra $M$ generated by $M_1$ and $M_2$ where the faithful normal conditional expectation $E : M \to B$ satisfies the freeness condition:
$$E(x_1 \cdots x_n) = 0 \mbox{ whenever } x_j \in M_{\iota_j} \ominus B \mbox{ and } \iota_j \neq \iota_{j + 1} \; .$$
Here and in what follows, we denote by $M_i \ominus B$ the kernel of the conditional expectation $E_i : M_i \recht B$.
We refer to \cite{voiculescu85,voiculescu92,ueda-pacific} for more details on the construction of amalgamated free products in the framework of von Neumann algebras.

We always assume that $B$ is a $\sigma$-finite von Neumann algebra. Fix a faithful normal state $\varphi$ on $B$. We still denote by $\varphi$ the faithful normal state $\varphi \circ E$ on $M$. We realize the continuous core of $M$ as $\core(M) = M \rtimes_\vphi \R$. Recall from \cite{ueda-pacific} that $\si_t^\vphi(M_i) = M_i$ and hence
$$\core(M) = \core(M_1) \ast_{\core(B)} \core(M_2) \; .$$

In \cite[Theorem 5.2]{CH08}, the following statement is made: if $B$ is of type \tI, if $M_1 \neq B \neq M_2$ and if $M$ is a nonamenable factor, then $M$ is prime. As such, this statement is wrong. As we already mentioned in the introduction, the mere condition $M_1 \neq B \neq M_2$ is too weak to avoid ``trivialities'' in which a corner of $M$ equals a corner of one of the $M_i$'s.

\begin{example}
We provide two types of examples where amalgamated free product factors $M=M_1 *_B M_2$ essentially are an amplification of one of the $M_i$.

{\bf (1)} Let $\Lambda \curvearrowright (Y, \nu)$ be a free ergodic pmp action of a nontrivial countable group $\Lambda$ on a standard probability space $(Y,\nu)$. Put $\Gamma = \Lambda \ast \Z$ and consider the \emph{induced} action $\Gamma \curvearrowright (X, \mu)$, with $X = \Ind_\Lambda^\Gamma Y$. Write $M = \rL^\infty(X) \rtimes \Gamma$. Since $\Gamma = \Lambda \ast \Z$, we canonically have $M = M_1 *_B M_2$, where $B = \rL^\infty(X)$, $M_1 = B \rtimes \Lambda$ and $M_2 = B \rtimes \Z$. So, $M_1 \neq B \neq M_2$. On the other hand, we also canonically have
$$M = (\rL^\infty(Y) \rtimes \Lambda) \ovt \B(\ell^2(\Gamma / \Lambda)) \; .$$
Taking a nonamenable group $\Lambda$ and a free ergodic pmp action $\Lambda \actson (Y,\nu)$ such that $\rL^\infty(Y) \rtimes \Lambda$ is not prime (e.g.\ McDuff), it follows that $M$ is a nonamenable factor that is not prime.

Also note that the induction construction comes with a $\Lambda$-invariant finite trace projection $p \in B$ such that $p (B \rtimes \Gamma) p = Bp \rtimes \Lambda$ and hence,
$$p (M_1 *_B M_2) p = p M_1 p \; .$$

{\bf (2)} State-preserving actions can only be induced from finite index subgroups. So the previous example does not provide natural examples of trivialized amalgamated free products of type \tIIone. But they exist as well, as demonstrated by the following example. Let $P$ be a \tIIone\ factor and $p_n, q_n$ nonzero projections in $P$ such that $\sum_{n = 0}^\infty (p_n + q_n) = 1$. Define the von Neumann algebras
\begin{align*}
M_1 & =  p_0 P p_0 \oplus \bigoplus_{n = 0}^\infty (p_{n + 1} + q_n) P (p_{n + 1} + q_n) \; ,\\
M_2 & = \bigoplus_{n = 0}^\infty (p_n + q_n) P (p_n + q_n) \; ,\\
B & = \bigoplus_{n = 0}^\infty (p_n P p_n \oplus q_n P q_n) \; .
\end{align*}
Taking the amalgamated free product w.r.t.\ the unique trace preserving conditional expectations, one checks that $p_0(M_1 \ast_B M_2)p_0 = p_0 M_1 p_0 = p_0 B p_0$.
\end{example}

The following statement provides a correct version of \cite[Theorem 5.2]{CH08}. The formulation is rather technical, but in combination with Theorem \ref{amalgamated} below, we will deduce the general primeness Theorem \ref{thmD} stated in the introduction.

If $i = 1$, resp.\ $2$, we denote $i' := 2$, resp.\ $1$.

\begin{theo}[Erratum to {\cite[Theorem 5.2]{CH08}}]\label{prime}
Let $M_i$, $i=1,2$, be von Neumann algebras with separable predual. Assume that $B$ is a type \tI\ von Neumann algebra and that we are given inclusions $B \subset M_i$ with faithful normal conditional expectations. Consider $M = M_1 \ast_B M_2$ and assume that $M$ is a nonamenable factor.

If $M$ is not prime, there exist $i \in \{1,2\}$, and projections $q \in \cZ(\core(M_i))$ and $p \in \cZ(\core(B))$ such that $0 < q \leq p$ and
$$q \core(M) q = \core(M_i) q \quad\text{and}\quad p \core(M_{i'}) p = \core(B) p \; .$$
\end{theo}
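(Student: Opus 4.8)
The plan is to reduce the primeness question for $M$ to a primeness question inside the continuous core $\core(M)$, where trace methods and the deformation/rigidity technology of Section~3 are available. Concretely, if $M$ is not prime, we may write $M = Q_1 \ovt Q_2$ with both $Q_j$ not of type \tI. Passing to the core, $\core(M) = \core(M) $ decomposes accordingly; more usefully, choosing a finite trace projection $e \in \core(Q_1)$ (possible since $\core(Q_1)$ is semifinite), we obtain inside $e\core(M)e$ a commuting pair $e\core(Q_1)e$ and $Q_2$ (more precisely $\pi_\vphi(Q_2)$), where the first is a nonamenable \tIIone-type factor corner and the second is its relative commutant. The key structural input is that $\core(M) = \core(M_1) \ast_{\core(B)} \core(M_2)$ with $\core(B)$ of type \tI, so we are in position to apply the amalgamated-free-product malleable deformation and spectral gap rigidity techniques of \cite{CH08, IPP05}, or better, to run the argument of Theorem~\ref{thmB}'s proof scheme. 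The outcome of such an argument is the dichotomy: either the \tIIone\ corner $P := e\core(Q_1)e$ embeds into one of the two amalgam algebras, i.e.\ $P \preceq_{\core(M)} \core(M_i)$ for some $i$, after possibly cutting by a central projection; or the relative commutant $P' \cap e\core(M)e \supset \pi_\vphi(Q_2)$ is amenable, which since $Q_2$ is not of type \tI\ and $M$ is nonamenable leads (via Takesaki duality and factoriality of $M$) to $M$ being amenable, a contradiction.

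So the surviving case is $P \preceq_{\core(M)} \core(M_i)$ for some $i \in \{1,2\}$. First I would upgrade this to a statement about a corner: using Popa's intertwining (Theorem~\ref{intertwining-general}, and the finite-index analysis of Lemma~\ref{triv-corner-I}, together with the fact that $\core(Q_1)$ is a factor so $P$ cannot be "spread thin"), one gets a nonzero projection and a unitary conjugacy showing that a corner of $\core(M)$ coincides with a corner of $\core(M_i)$. Here the structure theory of amalgamated free products comes in: once a \tIIone-corner of the amalgam embeds into one side, a Bass-Serre / normal form argument (as in \cite[Section 5]{CH08} and \cite{IPP05}) forces that in fact, after cutting by central projections $q \in \cZ(\core(M_i))$, one has $q\core(M)q = \core(M_i)q$; the point is that the other factor $Q_2 = \pi_\vphi(Q_2)$ must then live in $\core(M_i)q$ and be "large", which compresses everything. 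Then I would run the argument once more, now with the roles reversed, applied to the complementary piece: the relative commutant of $\core(M_i)q$ inside $q\core(M)q$ is forced to sit inside $\core(B)$, which after cutting gives the projection $p \in \cZ(\core(B))$ with $0 < q \le p$ (arranging $q \le p$ by replacing $p$ with $p \vee z(q)$ or cutting appropriately) and $p\core(M_{i'})p = \core(B)p$.

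The main obstacle I expect is the passage from "$P \preceq_{\core(M)} \core(M_i)$" to the rigid conclusion "$q\core(M)q = \core(M_i)q$ with $q$ central in $\core(M_i)$". The embedding statement alone only gives a finite-dimensional bimodule, and one must leverage both (a) the \emph{factoriality} of $\core(Q_1)$ and the \emph{nonamenability}, to exclude the degenerate possibilities, and (b) the amalgamated free product normal form, to push the corner all the way up to a central cutdown. This is exactly the technical gap that was "overlooked in \cite{CH08}", so the care needed here is precisely to track which projections are central in which algebra, using Lemma~\ref{lem.triv-corner} and Lemma~\ref{triv-corner-I} to manage the type \tI\ combinatorics of $\core(B)$. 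A secondary subtlety is ensuring that the two cutdowns (the $q$ coming from side $i$ and the $p$ coming from side $i'$) are compatible, i.e.\ that one can arrange $q \le p$; this should follow by first producing $p$ and then restricting the whole analysis to $p\core(M)p$, noting $p\core(M_{i'})p = \core(B)p$ means side $i'$ has collapsed, so $p\core(M)p$ is (an amplification of) $\core(M_i)p$, whence the central projection $q$ of $\core(M_i)$ can be taken below $p$.
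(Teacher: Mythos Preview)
Your plan for the first half is essentially what the paper does: cite \cite[Theorem 5.2]{CH08} to obtain some $i$ and a nonzero projection in (an amplification of) $\core(M_i)$ whose corner of $\core(M)$ equals the corresponding corner of $\core(M_i)$, then pass to a genuine projection in $\core(M_i)$ and replace it by its central support to get $q \in \cZ(\core(M_i))$ with $q\,\core(M)\,q = \core(M_i)\,q$. That part is fine.

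The gap is in how you propose to extract $p$. Your plan---``run the argument once more with roles reversed'' and analyze ``the relative commutant of $\core(M_i)q$ inside $q\,\core(M)\,q$''---does not work as stated: since $q\,\core(M)\,q = \core(M_i)\,q$, that relative commutant is just $\cZ(\core(M_i))\,q$, which carries no direct information about $\core(M_{i'})$, and there is no nonamenability hypothesis left on the other side to drive a second deformation/rigidity pass. The paper instead uses a short, elementary free-product normal-form computation. From $q\,\core(M)\,q = \core(M_i)\,q$ one gets $q\,x\,q = q\,E_{\core(M_i)}(x)\,q$ for all $x$, hence $q\,(\core(M_{i'}) \ominus \core(B))\,q = 0$. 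Now decompose $q = b + q_0$ with $b = E_{\core(B)}(q)$ and $q_0 \in \core(M_i) \ominus \core(B)$; for $x \in \core(M_{i'}) \ominus \core(B)$ the four terms $q_0 x q_0$, $(bx)q_0$, $q_0(xb)$, $bxb$ lie in mutually orthogonal pieces of the free-product decomposition, so each vanishes separately, in particular $b x b = 0$. Taking $p$ to be the support projection of $b$ (which lies in $\cZ(\core(B))$ because $q \in \cZ(\core(M_i))$) immediately gives $p\,\core(M_{i'})\,p = \core(B)\,p$, and $q \le p$ is automatic since $p$ is the support of $E_{\core(B)}(q)$. No second rigidity argument, no juggling of the order in which $p$ and $q$ are produced.
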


\begin{proof}
Write $\cM = \core(M)$, $\cM_i = \core(M_i)$ and $\cB = \core(B)$. We canonically have $\cM = \cM_1 *_\cB \cM_2$.
Assume that $M$ is not prime. The proof of \cite[Theorem 5.2]{CH08} is correct up to the point where an $i \in \{1,2\}$ and a nonzero projection $q \in \cM_i^\infty := \cM_i \ovt \B(\ell^2)$ are found such that $q \cM^\infty q = q \cM_i^\infty q$. Assume that $i = 1$.
Inside $\cM_1 \ovt \B(\ell^2)$, the projections $q$ and $1 \ot e_{00}$ have nonzero equivalent subprojections. So there exists a nonzero partial isometry $v \in \cM_1^\infty$ such that $vv^* \leq q$ and $v^*v = q' \ot e_{00}$. Replacing $q$ by $q'$, we have found a nonzero projection $q \in \cM_1$ such that $q \cM q = q \cM_1 q$. Multiplying left and right with $\cM_1$, we conclude that the same equality remains true if we replace $q$ by its central support in $\cM_1$. So we have found a nonzero projection $q \in \cZ(\cM_1)$ such that $q \cM q = \cM_1 q$.

For all $x \in \cM$, we have $qxq \in \cM_1$ and hence $qxq = E_{\cM_1}(qxq) = q E_{\cM_1}(x) q$. We conclude that $q (\cM \ominus \cM_1) q = 0$. Since $\cM_2 \ominus \cB$ is a subset of $\cM \ominus \cM_1$, we get that $q (\cM_2 \ominus \cB) q = 0$. We now put $b = E_\cB(q)$ and $q_0 = q - b$, so that $q_0 \in \cM_1 \ominus \cB$. For every $x \in \cM_2 \ominus \cB$, we get that
$$0 = q x q = q_0 x q_0 + (bx) q_0 + q_0 (xb) + b x b \; .$$
The four terms on the right hand side respectively belong to $(\cM_1 \ominus \cB)(\cM_2 \ominus \cB)(\cM_1 \ominus \cB)$, $(\cM_2 \ominus \cB)(\cM_1 \ominus \cB)$, $(\cM_1 \ominus \cB)(\cM_2 \ominus \cB)$ and $\cM_2 \ominus \cB$. By freeness with amalgamation, these four subspaces are orthogonal. So it follows that $bxb = 0$ for all $x \in \cM_2 \ominus \cB$. Denoting by $p$ the support projection of $b$, we get that $pxp = 0$ for all $x \in \cM_2 \ominus \cB$. Since $q \in \cZ(M_1)$, we get that $b \in \cZ(\cB)$ and hence also $p \in \cZ(\cB)$. We conclude that $p \cM_2 p = \cB p$. By construction $q \leq p$.
\end{proof}

We prove the following rather general result, ensuring that an amalgamated free product $M_1 *_B M_2$ over a type \tI\ von Neumann algebra $B$ often is a nonamenable factor. In \cite[Theorem 4.3 and Theorem 4.13]{ueda12}, Ueda independently proved the factoriality of $M_1 *_B M_2$ under a weaker set of conditions on $B \subset M_i$. Since our method is more elementary than the approach of
\cite{ueda12}, we give a complete proof here as well.

\begin{theo}\label{amalgamated}
Let $M_i$, $i=1,2$, be von Neumann algebras with separable predual. Assume that $B$ is a type \tI\ von Neumann algebra and that we are given inclusions $B \subset M_i$ with faithful normal conditional expectations. Assume that the inclusions $B \subset M_i$ have no trivial corner. Then $M = M_1 \ast_B M_2$ has no amenable direct summand and
$$\cZ(M) = \cZ(M_1) \cap \cZ(B) \cap \cZ(M_2) \quad\text{and}\quad \cZ(\core(M)) = \cZ(\core(M_1)) \cap \cZ(\core(B)) \cap \cZ(\core(M_2)) \; .$$
\end{theo}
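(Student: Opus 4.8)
The two $\supseteq$ inclusions are immediate: an element of $\cZ(M_1)\cap\cZ(B)\cap\cZ(M_2)$, or of the analogous intersection for the cores, commutes with $M_1$ and $M_2$ (resp.\ with $\core(M_1)$ and $\core(M_2)$), hence with the von Neumann algebra they generate. For the rest the plan is to isolate a single statement and bootstrap. Call a pair of inclusions $B_0\subseteq N_i$ ($i=1,2$) \emph{admissible} if the $N_i$ have separable predual, $B_0$ is a semifinite type \tI\ von Neumann algebra with a fixed normal semifinite faithful trace extending to a normal semifinite faithful trace on each $N_i$, and each $B_0\subseteq N_i$ has no trivial corner (so the trace-preserving conditional expectations $E_i:N_i\recht B_0$ exist and one forms $N=N_1\ast_{B_0}N_2$ with respect to them). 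I would prove:
\[(\star)\qquad\text{for every admissible pair of inclusions, }N\text{ has no amenable direct summand and }\cZ(N)\subseteq B_0.\]
Granting $(\star)$, the theorem follows by applying it to $\core(B)\subseteq\core(M_i)$: this is admissible since $\core(B)$ has separable predual, is type \tI\ (the core of a type \tI\ algebra being type \tI) and semifinite with its canonical trace, the dual conditional expectations are trace preserving, and the no-trivial-corner property is inherited via Proposition~\ref{core}; moreover $\core(M)=\core(M_1)\ast_{\core(B)}\core(M_2)$. Hence $\core(M)$ has no amenable direct summand and $\cZ(\core(M))\subseteq\core(B)$, which upgrades to the stated triple intersection for the cores by commutation. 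For $M$ itself: a nonzero central projection $z$ of $M$ is fixed by the modular flow, hence central in $\core(M)$, and $z\core(M)=\core(Mz)$; were $Mz$ amenable so would be $\core(Mz)$, contradicting $(\star)$ for the core. And $\cZ(M)\subseteq\cZ(\core(M))\cap M\subseteq\core(B)\cap M=B$, the last equality because $B$ is the fixed-point algebra of the dual action inside $\core(B)$; this again upgrades to the triple intersection.

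To prove $(\star)$ I would first perform two reductions. Corners of an amalgamated free product by a projection in the amalgam are again amalgamated free products, and by Lemma~\ref{lem.triv-corner}(1) the no-trivial-corner property and the semifiniteness of the trace pass to such corners. Cutting $N$ by the members of a family of orthogonal abelian projections in $B_0$ summing to $1$ (these exist since $B_0$ is type \tI) reduces $(\star)$ to the case $B_0$ abelian; cutting then by an increasing family of finite-trace projections in $B_0$ with supremum $1$ reduces further to the case where $N_1,N_2$ are finite von Neumann algebras, in which case $N=N_1\ast_{B_0}N_2$ is automatically a finite tracial von Neumann algebra. In each reduction the inclusion $\cZ(N)\subseteq B_0$ is assembled from the corresponding statements for the corners, and the absence of an amenable direct summand follows by contradiction with a finite-trace corner.

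So let $N_1,N_2$ be finite, $B_0\subseteq N_i$ abelian without trivial corner, and write $\tau$ for the trace on $N$, so that $\tau\circ E_i=\tau$. Lemma~\ref{triv-corner-I}(3), applied with $q=p=1$, produces unitaries $u_i\in\cU(N_i)$ with $E_i(u_i^n)=0$ for all $n\neq 0$; since $\tau\circ E_i=\tau$ this forces $\tau(u_i^n)=0$ for $n\neq 0$, so each $u_i$ is a Haar unitary and $Q_i:=W^{*}(u_i)$ is diffuse abelian. As $\tau(q)=0$ implies $E_i(q)=0$ for $q\in Q_i$, any alternating word in $Q_1\ominus\C 1$ and $Q_2\ominus\C 1$ is a reduced word over $B_0$ and so has vanishing trace; hence $Q_1$ and $Q_2$ are $\tau$-free, $Q_1\vee Q_2\cong\rL\F_2$ sits inside $N$ with a trace-preserving conditional expectation, and $N$ is non-amenable. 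For $\cZ(N)\subseteq B_0$: given $z\in\cZ(N)$, put $z'=z-E_{N_1}(z)\in N\ominus N_1$; by $N_1$-bimodularity of $E_{N_1}$ the element $z'$ commutes with $u_1$, so $u_1^n z'u_1^{-n}=z'$ for all $n$, and the free computation below yields $\langle u_1^n z'u_1^{-n},z'\rangle\recht 0$ as $|n|\recht\infty$. Thus $\|z'\|_2=0$, so $z\in N_1$ and hence $z\in\cZ(N_1)$; symmetrically $z\in\cZ(N_2)$, so $z\in N_1\cap N_2=B_0$. Finally, knowing $\cZ(N)\subseteq B_0$, any nonzero central projection $z$ of $N$ lies in $\cZ(B_0)$, so $Nz=N_1z\ast_{B_0z}N_2z$ is again a finite amalgamated free product of the abelian, no-trivial-corner type (Lemma~\ref{lem.triv-corner}(1)) and is therefore non-amenable by the argument just given; hence $N$ has no amenable direct summand.

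The step I expect to be the main obstacle is the free computation $\langle u_1^n z'u_1^{-n},z'\rangle\recht 0$. The delicate point is that $u_1$ need not normalize $B_0$, so conjugation by $u_1^{\pm n}$ does not merely permute the word-length components of $z'$: one decomposes $z'$ according to whether the first and last letters of each of its reduced words come from $N_1$ or from $N_2$, and checks that in every case conjugation by $u_1^{\pm n}$ pushes mass either into strictly longer reduced words or into reduced words carrying a ``far-out'' $u_1^n$-component, whose overlap with the fixed vector $z'$ tends to $0$ as $|n|\recht\infty$. This is a routine, if somewhat lengthy, amalgamated free product manipulation in the spirit of \cite{IPP05}. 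The only other point requiring attention is the bookkeeping in the reductions — that corners of amalgamated free products by projections in the amalgam are again such free products, and that the no-trivial-corner hypothesis together with semifiniteness of the reference trace propagate through the cuts and to the continuous core — which is precisely what Lemmas~\ref{lem.triv-corner}, \ref{triv-corner-I} and Proposition~\ref{core} are designed to supply.
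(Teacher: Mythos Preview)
Your overall architecture — pass to the continuous core via Proposition~\ref{core}, reduce to an abelian finite-trace amalgam, embed $\rL(\F_2)$ for non-amenability, and run a freeness computation for the center — is sound and close to the paper's. The reductions and the $\rL(\F_2)$ embedding are fine and essentially match what the paper does.

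The gap is exactly where you flag it, but it is more serious than ``routine if lengthy''. Your center argument hinges on $\langle u_1^n z' u_1^{-n}, z'\rangle \to 0$ for every $z' \in \rL^2(N)\ominus \rL^2(N_1)$, using only that $E_{B_0}(u_1^n)=0$. When you unwind the IPP-type ladder for a reduced word of type $1$--$2$--$1$ (say $z'=w_1 v w_2$), the surviving term is of the form $\tau\bigl(g_n\, a'_n\, w_2^*\bigr)$ with $g_n = E_{B_0}\bigl(v^* E_{B_0}(w_1^* u_1^n w_1)\, v\bigr)$ and $a'_n = w_2 u_1^{-n} - E_{B_0}(w_2 u_1^{-n})$. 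The inputs $E_{B_0}(w_1^* u_1^n w_1)$ and $E_{B_0}(w_2 u_1^{-n})$ converge to $0$ only \emph{weakly} in $\rL^2(B_0)$ (this is all that $u_1^n\to 0$ weakly gives), not in $\|\cdot\|_2$, and you are pairing two $n$-dependent bounded quantities with no evident decay. In bimodule language, $\rL^2(N)\ominus \rL^2(N_1)$ is built from copies of $\rL^2(N_1)\otimes_{B_0}\rL^2(N_1)$, and what you would need is that $\Ad u_1^n$ is mixing on that coarse-over-$B_0$ bimodule, i.e.\ $\|E_{B_0}(x\,u_1^n\,y)\|_2\to 0$ for all $x,y\in N_1$ — equivalently $\{u_1\}''\npreceq_{N_1} B_0$. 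This does \emph{not} follow from $E_{B_0}(u_1^n)=0$; Lemma~\ref{triv-corner-I}(3) gives you a Haar unitary orthogonal to $B_0$, not one whose powers are mixing relative to $B_0$.

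The paper fixes precisely this point by using the \emph{full} corner rather than a single Haar unitary. From the no-trivial-corner hypothesis and Lemma~\ref{triv-corner-I}(2) one gets $p\cM_1 p \npreceq_{e\cM_1 e} q\cB q$ for every finite-trace $q\in\cB$, and then \cite[Lemma~2.2]{houdayer-ricard} converts this into a net $v_k\in\cU(p\cM_1 p)$ with $\|E_{\cB}(x v_k y)\|_{2,\Tr}\to 0$ for \emph{all} $x,y\in\cM_1$. With that genuine mixing input the free computation $\|E(v_k^* x v_k y)\|_2\to 0$ for $x,y\in\cM\ominus\cM_1$ is indeed one line, and applying it to $x=zp-E_{\cM_1}(zp)$ forces $zp\in\cM_1$. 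Your argument is easily repaired the same way: after your reductions, invoke Lemma~\ref{triv-corner-I}(2) (not (3)) to get $N_1\npreceq_{N_1} B_0$, take a sequence $v_k\in\cU(N_1)$ with $\|E_{B_0}(x v_k y)\|_2\to 0$, and run the freeness computation with $v_k$ in place of $u_1^n$; keep the Haar unitaries $u_1,u_2$ from Lemma~\ref{triv-corner-I}(3) only for the $\rL(\F_2)$ embedding.
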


\begin{proof}
Write $\cM = \core(M)$, $\cM_i = \core(M_i)$ and $\cB = \core(B)$. We first prove that $\cZ(\cM) = \cZ(\cM_1) \cap \cZ(\cM_2)$. The inclusion $\cZ(\cM_1) \cap \cZ(\cM_2) \subset \cZ(\cM)$ being trivial, take $z \in \cZ(\cM)$. Fix a nonzero finite trace projection $p \in \cM_1$. We prove that $zp \in \cM_1$. Once this is proven for all finite trace projections $p \in \cM_1$, it follows that $z \in \cZ(\cM_1)$. By symmetry, we then also have $z \in \cZ(\cM_2)$.

Let $q \in \cB$ be an arbitrary finite trace projection and put $e = q \vee p$. Since the inclusion $B \subset M_i$ has no trivial corner, the inclusion $\core(B) \subset \core(M_1)$ has no trivial corner either by Proposition \ref{core}. So Proposition \ref{triv-corner-I} implies that $e \cM_1 e \npreceq_{e \cM_1 e} q \cB q$. Then also $p \cM_1 p \npreceq_{e \cM_1 e} q \cB q$.
Since this holds for all finite trace projections $q \in \cB$, it follows from \cite[Lemma 2.2]{houdayer-ricard} that there exists a net of unitaries $v_k \in p \cM_1 p$ such that $\|E_1(x v_k y)\|_{2,\Tr} \recht 0$ for all $x,y \in \cM_1$. Take
$$x = x_0 y_1 x_1 \cdots y_n x_n \quad\text{and}\quad a = a_0 b_1 a_1 \cdots b_m a_m$$
with $n,m \geq 1$, $x_0,x_n,a_0,a_m \in \cM_1$, $x_i, a_j \in \cM_1 \ominus \cB$ for $1 \leq i \leq n-1$, $1 \leq j \leq m-1$ and $y_i,b_i \in \cM_2 \ominus \cB$ for $1 \leq i \leq n$, $1 \leq j \leq m$. Recall that $\cM = \cM_1 *_\cB \cM_2$, w.r.t.\ the natural conditional expectations $E_i : \cM_i \recht \cB$.
Denote by $E : \cM \recht \cB$ the corresponding conditional expectation on the amalgamated free product $\cM = \cM_1 *_\cB \cM_2$. Freeness with amalgamation ensures that
$$E(v_k^* \, x \, v_k \, a) = E(v_k^* \; x_0 y_1 x_1 \cdots y_n \; E_1(x_n \, v_k \, a_0) \; b_1 a_1 \cdots b_m a_m) \; .$$
So, $\lim_k \|E(v_k^* \, x \, v_k \, a)\|_{2,\Tr} = 0$ for all the above choices of $x$ and $a$.
Since $E(v_k^* \, x \, v_k \, a) = E(v_k^* \, xp \, v_k \, pa)$ and since the elements $xp$ and $pa$ span $\|\,\cdot\,\|_{2,\Tr}$-dense subspaces of $(\cM \ominus \cM_1)p$, resp.\ $p (\cM \ominus \cM_1)$, we conclude that $\|E(v_k^* x v_k y)\|_{2,\Tr} \recht 0$ for all $x,y \in \cM \ominus \cM_1$. Put $x = zp - E_1(zp)$ and $y = x^*$. Note that $x$ commutes with all the $v_k \in \cU(p \cM_1 p)$. It follows that $\|E(xx^*)\|_{2,\Tr} = 0$ and hence $x = 0$. So, $zp \in \cM_1$. As explained in the first paragraph of the proof, we have shown that $\cZ(\cM) = \cZ(\cM_1) \cap \cZ(\cM_2)$.

Since $\cZ(M) = \cZ(\core(M)) \cap M$, while $\cZ(\core(M_i)) \cap M = \cZ(M_i)$, we also find that $\cZ(M) = \cZ(M_1) \cap \cZ(M_2)$.

It remains to prove that $M$ has no amenable direct summand. Fix an arbitrary nonzero finite trace projection $p \in \cB$. Observe that
$$p \cM_1 p \ast_{p \cB p} p \cM_2 p \subset p \cM p \; .$$
By Proposition \ref{triv-corner-I}, we can find unitaries $u_i \in \cU(p \cM_i p)$ such that $E_{p \cB p}(u_i^n) = 0$ for all $n \in \Z \setminus \{0\}$. In this way, we obtain a trace-preserving $\ast$-embedding $\rL(\F_2) \hookrightarrow p \cM p$. It follows that for all finite trace projections $p \in \cB$, the finite von Neumann algebra $p \cM p$ has no amenable direct summand. Therefore also $\cM$ has no amenable direct summand.
\end{proof}

Recall that for any factor $M$, the restriction of the dual action $\theta : \R \actson \core(M)$ to the center $\cZ(M)$ is called the \emph{flow of weights} of $M$.

\begin{cor}
Let $B \subset M_i$ and $E_i : M_i \recht B$ be as in Theorem \ref{amalgamated}. Assume that $\cZ(M_1) \cap \cZ(B) \cap \cZ(M_2) = \C 1$ and put $M = M_1 *_B M_2$.
Then the amalgamated free product $M$ is a nonamenable factor and the following holds.
\begin{enumerate}
\item $M$ is of type \tIIone\ if and only if $B$ admits a faithful normal tracial state $\tau$ such that for $i=1,2$, the state $\tau \circ E_i$ is tracial on $M_i$.
\item $M$ is of type \tIIinfty\ if and only if $B$ admits a normal semifinite faithful trace $\Tr$ with $\Tr(1) = +\infty$ such that for $i=1,2$, the normal semifinite weight $\Tr \circ E_i$ is tracial on $M_i$.
\item Fix any normal semifinite faithful weight $\vphi$ on $B$ and put $\vphi_i := \vphi \circ E_i$. The $T$-invariant of $M$ is given by
$$T(M) = \{t \in \R \mid \exists u \in \cU(B) , \forall i \in \{1,2\}, \forall x \in M_i , \si_t^{\vphi_i}(x) = u x u^* \} \; .$$
\item Write $\cZ(B) = \rL^\infty(X)$ and define the flow $\R \actson X \times \R$ given by $t \cdot (x,s) = (x, t+s)$. Identify $\rL^\infty(X \times \R) = \cZ(\core(B))$ and define the factor flows $X \times \R \recht X_i$ such that $\rL^\infty(X_i) = \cZ(\core(B)) \cap \cZ(\core(M_i))$. Then the flow of weights $\R \actson Y$ of $M$ is the unique largest common factor flow $X_i \recht Y$ such that the following diagram commutes.
    $$\begin{matrix}
    & \hspace{-1.5ex}\raisebox{-1ex}[0ex][0ex]{$\nearrow$} & \hspace{-1.5ex} X_1 & \hspace{-1.5ex}\raisebox{-1ex}[0ex][0ex]{$\searrow$} & \\
 X \times \R & & & & \hspace{-1.5ex}Y \\
    & \hspace{-1.5ex}\raisebox{1ex}[0ex][0ex]{$\searrow$} & \hspace{-1.5ex} X_2 & \hspace{-1.5ex}\raisebox{1ex}[0ex][0ex]{$\nearrow$} &
    \end{matrix} \;\;.$$
\end{enumerate}
\end{cor}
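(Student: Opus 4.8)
The plan is to first record the structural inputs, then prove part~(3), deduce parts~(1) and~(2) from it, and finally read off part~(4) from Theorem~\ref{amalgamated}. Fix a faithful normal semifinite weight $\vphi$ on $B$ and put $\vphi_M := \vphi\circ E$ and $\vphi_i := \vphi\circ E_i$. The inputs are: (a) besides $E$, the amalgamated free product $(M,E)=(M_1,E_1)\ast_B(M_2,E_2)$ carries a faithful normal $\vphi_M$-preserving conditional expectation $F_i:M\recht M_i$ (kill every reduced word containing a letter of $M_{i'}\ominus B$), so $E\circ F_i=E$; (b) hence each $M_i$ is globally $\si^{\vphi_M}$-invariant and, by Takesaki's theorem, $\si_t^{\vphi_M}|_{M_i}=\si_t^{\vphi_i}$ while $\si_t^{\vphi_M}|_B=\si_t^{\vphi}$; (c) $\core(M)=\core(M_1)\ast_{\core(B)}\core(M_2)$. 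Moreover, since $B\subset M_i$ has no trivial corner, Theorem~\ref{amalgamated} applies and yields $\cZ(M)=\cZ(M_1)\cap\cZ(B)\cap\cZ(M_2)=\C$ by hypothesis together with the absence of an amenable direct summand; so $M$ is a nonamenable factor, in particular not of type~\tI.

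For part~(3), the inclusion ``$\supseteq$'' is immediate: a unitary $u\in\cU(B)$ implementing $\si_t^{\vphi_i}$ on $M_i$ for $i=1,2$ makes $\Ad(u)$ and $\si_t^{\vphi_M}$ agree on the generating set $M_1\cup M_2$, hence on $M$, so $\si_t^{\vphi_M}\in\Inn(M)$ and $t\in T(M)$. For ``$\subseteq$'', I would take $t\in T(M)$, write $\si_t^{\vphi_M}=\Ad(v)$ with $v\in\cU(M)$, and observe that conjugation by $v$ maps $B$ onto $B$ and $M_i\ominus B$ onto $M_i\ominus B$ (because $\si_t^{\vphi_i}$ commutes with $E_i$); hence $\Ad(v)$ preserves the free-product grading $\rL^2(M)=\rL^2(B)\oplus\bigoplus_{n\ge1}\bigoplus_{i_1\ne\cdots\ne i_n}\rL^2(M_{i_1}\ominus B)\otimes_B\cdots\otimes_B\rL^2(M_{i_n}\ominus B)$. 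Feeding the relations $vb=\si_t^{\vphi}(b)v$ ($b\in B$) and $vx=\si_t^{\vphi_i}(x)v$ ($x\in M_i\ominus B$) into this decomposition and comparing length-$n$ components on both sides should force the length-$n$ part of $\widehat v$ to vanish for every $n\ge1$; then $v\in B$, so $u:=v\in\cU(B)$ works. I expect this reduced-word bookkeeping to be the main technical point of the whole corollary; it is of a standard type and could alternatively be replaced by an appeal to the structure of normalizers in amalgamated free products in the spirit of \cite{IPP05,ueda12}.

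For parts~(1) and~(2) I would argue as follows. One direction is a routine property of amalgamated free products: if $B$ admits a faithful normal trace $\tau$ (finite, resp.\ semifinite with $\tau(1)=+\infty$) with each $\tau\circ E_i$ tracial on $M_i$, then, since $\tau\circ E_1|_B=\tau=\tau\circ E_2|_B$, the weight $\tau\circ E$ is a faithful normal trace on $M=M_1\ast_B M_2$ (checkable on reduced words, cf.\ \cite{ueda-pacific}); thus $M$ is a semifinite factor, and being nonamenable it is not of type~\tI, hence of type~\tIIone\ if $\tau(1)<\infty$ and of type~\tIIinfty\ if $\tau(1)=+\infty$. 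Conversely, if $M$ is of type~\tIIone\ or \tIIinfty\ then $M$ is semifinite, so $T(M)=\R$, and by part~(3) there are unitaries $u_t\in\cU(B)$ with $\si_t^{\vphi_i}=\Ad(u_t)$ on $M_i$ for all $t$ and both $i$. Choosing the $u_t$ measurably and using $M'\cap M=\C$, one checks that $(u_t^*)$ satisfies the $\si^\vphi$-cocycle identity up to a measurable scalar $2$-cocycle on $\R$, which is a coboundary, so after rescaling $(u_t^*)$ is a genuine Connes cocycle in $\cU(B)$; Connes' Radon--Nikodym theorem then produces a faithful normal semifinite weight $\psi$ on $B$ with $(D\psi:D\vphi)_t=u_t^*$. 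As Connes cocycles are unaffected by composing with a conditional expectation, $(D(\psi\circ E_i):D\vphi_i)_t=u_t^*$ too, so $\si_t^{\psi\circ E_i}=\Ad(u_t^*)\circ\si_t^{\vphi_i}=\id$ on $M_i$, i.e.\ $\psi\circ E_i$ is tracial. By the first direction $\psi\circ E$ is then a faithful normal semifinite trace on $M$; since a \tIIone\ (resp.\ \tIIinfty) factor has, up to scaling, a unique faithful normal semifinite trace, which is finite (resp.\ infinite), we get $\psi(1)<\infty$ (resp.\ $=+\infty$), and $\tau:=\psi(1)^{-1}\psi$ (resp.\ $\Tr:=\psi$) is the desired trace on $B$.

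Finally, for part~(4) essentially everything is contained in Theorem~\ref{amalgamated}, and the remaining work is bookkeeping with the continuous core of a type~\tI\ algebra. The flow of weights of the factor $M$ is $\theta$ restricted to $\cZ(\core(M))$. Since $B$ is of type~\tI\ with $\cZ(B)=\rL^\infty(X)$, the modular group of $\vphi$ is inner fibrewise over $X$, so $\core(B)\cong B\ovt\rL^\infty(\R)$ with $\theta$ translating the $\R$-coordinate; hence $\cZ(\core(B))=\rL^\infty(X)\ovt\rL^\infty(\R)=\rL^\infty(X\times\R)$ with $\theta$ the translation flow, exactly the identification in the statement. Each $\core(M_i)\subset\core(M)$ is globally $\theta$-invariant (it is generated by $M_i$ and the $\theta$-eigenvectors $\lambda_{\vphi_M}(s)$), so $\rL^\infty(X_i):=\cZ(\core(M_i))\cap\cZ(\core(B))$ is a $\theta$-invariant von Neumann subalgebra of $\rL^\infty(X\times\R)$, i.e.\ precisely a factor flow $X\times\R\recht X_i$. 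By Theorem~\ref{amalgamated}, $\cZ(\core(M))=\cZ(\core(M_1))\cap\cZ(\core(B))\cap\cZ(\core(M_2))=\rL^\infty(X_1)\cap\rL^\infty(X_2)$ inside $\rL^\infty(X\times\R)$, and since the intersection of two $\theta$-invariant subalgebras is the largest $\theta$-invariant subalgebra contained in both, the flow space $Y$ with $\rL^\infty(Y)=\cZ(\core(M))$ is the largest common factor flow completing the displayed diagram. No obstacle arises here beyond Theorem~\ref{amalgamated}.
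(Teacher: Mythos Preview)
Your overall strategy is plausible but takes a genuinely different route from the paper, and the point you flag as ``the main technical point'' is exactly where the paper's argument diverges and becomes much simpler.

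The paper never tries to locate the implementing unitary $v$ of $\si_t^{\vphi_M}$ inside $B$ by a reduced-word analysis. Instead, everything is done in the continuous core using Theorem~\ref{amalgamated}. For part~(3), the paper invokes the standard characterization (\cite[Theorem~XII.1.6]{takesakiII}): $t\in T(M)$ iff there is a unitary $v\in\cZ(\core(M))$ with $\theta_s(v)=e^{its}v$. Since $B$ is of type~\tI\ one may take $\vphi=\Tr$ a trace on $B$, so $\cZ(\core(B))=\cZ(B)\ovt\rL(\R)$; Theorem~\ref{amalgamated} gives $v\in\cZ(\core(M))\subset\cZ(\core(B))$, and the eigenvalue equation forces $v=u^*\lambda_t$ with $u\in\cU(\cZ(B))$. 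That $v$ is central in $\core(M)$ means it commutes with each $M_i$, i.e.\ $\si_t^{\vphi_i}=\Ad u$. No grading bookkeeping, no normalizer results are needed. Your direct argument (``comparing length-$n$ components should force the length-$n$ part of $\widehat v$ to vanish'') is the kind of computation that can be made to work in the tracial setting, but here $\vphi$ is only a weight and the left/right module structure of the free-product Fock space is asymmetric; as written it is a genuine gap.

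For parts~(1) and~(2), the paper again works entirely in the core: if $M$ is semifinite, $\cZ(\core(M))$ is generated by a one-parameter unitary group $(V_s)$ with $\theta_t(V_s)=e^{ist}V_s$; by Theorem~\ref{amalgamated} this lies in $\cZ(B)\ovt\rL(\R)$, so $V_s=\Delta^{is}\lambda_s$ for a positive $\Delta$ affiliated with $\cZ(B)$, and replacing $\Tr$ by $\Tr(\Delta\,\cdot)$ makes each $\Tr\circ E_i$ tracial. This bypasses your route through $T(M)=\R$, the measurable selection of $u_t$, the scalar $2$-cocycle, and Connes' Radon--Nikodym theorem. Your argument is morally correct but the measurable-selection step is not free, and the whole detour is unnecessary once one uses the core.

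Your treatment of part~(4) matches the paper's.
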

\begin{proof}
Choose a normal semifinite faithful trace $\Tr$ on $B$. Define the weights $\vphi_i = \Tr \circ E_i$ on $M_i$. We concretely realize $\core(M_i) = M_i \rtimes_{\vphi_i} \R$ using the modular group of $\vphi_i$ and denote by $(\lambda_s)_{s \in \R} \in \core(M_i)$ the canonical unitaries that implement $(\si_s^{\vphi_i})$.

It follows from Theorem \ref{amalgamated} that $M$ is a nonamenable factor. In particular, $M$ is not of type \tI.

First assume that $M$ is semifinite. So, the flow of weights $\theta : \R \actson \cZ(\core(M))$ is isomorphic with the translation action of $\R$ on $\rL^\infty(\R)$. This means that $\cZ(\core(M))$ is generated by a $1$-parameter group of unitaries $(V_s)_{s \in \R}$ satisfying $\theta_t(V_s) = \exp(ist) V_s$ for all $s, t \in \R$. By Theorem \ref{amalgamated}, we have $\cZ(\core(M)) \subset \cZ(\core(B))$. Since $B$ is semifinite, we have $\cZ(\core(B)) = \cZ(B) \ovt \rL(\R)$. So, $V_s$ must be of the form $V_s = \Delta^{is} \lambda_s$, where $\Delta$ is a nonsingular positive operator affiliated with $\cZ(B)$. Replacing $\Tr$ by the normal semifinite faithful trace given by $\Tr(\Delta \, \cdot)$, we get that $V_s = \lambda_s$, meaning that $\cZ(\core(M))$ is generated by the canonical unitaries $(\lambda_s)_{s \in \R}$. This precisely means that the weights $\Tr \circ E_i$ on $M_i$ and the weight $\Tr \circ E$ on $M$ are tracial.

If $M$ was of type \tIIone, we have $(\Tr \circ E)(1) < \infty$ and hence $\Tr(1) < \infty$. Putting $\tau = \Tr(1)^{-1} \Tr$, we have found a faithful normal tracial state $\tau$ on $B$ satisfying the conditions in (1). If $M$ was of type \tIIinfty, we have $\Tr(1) = +\infty$ and we have found a trace $\Tr$ on $B$ satisfying the conditions in (2).

Conversely, if $\tau$ satisfies the assumptions in (1), then the state $\tau \circ E$  is tracial on $M$ and we conclude that $M$ is of type \tIIone. We similarly get that $M$ is of type \tIIinfty\ if $\Tr$ satisfies the assumptions in (2).

To prove (3), we may assume that $\vphi = \Tr$, where $\Tr$ is a normal semifinite faithful trace on $B$. Put $\vphi_i = \Tr \circ E_i$. If $t \in \R$ and $u \in \cU(B)$ such that $\si_t^{\vphi_i} = \Ad u$ for all $i \in \{1,2\}$, it follows that $\si_t^{\Tr \circ E} = \Ad u$ and hence, $t \in T(M)$. Conversely, assume that $t \in T(M)$.
As above, denote by $\theta : \R \actson \core(M)$ the dual action. Since $t \in T(M)$, \cite[Theorem XII.1.6]{takesakiII} provides a unitary $v \in \cZ(\core(M))$ such that $\theta_s(v) = \exp(its) v$ for all $s \in \R$. By Theorem \ref{amalgamated}, we have that $v \in \cZ(\core(B)) = \cZ(B) \ovt \rL(\R)$. So, $v = u^* \lambda_t$ for some $u \in \cU(\cZ(B))$. Since $v \in \cZ(\core(M))$, we get that $v$ commutes with $M_i$ for both $i=1,2$. This precisely means that $\si_t^{\vphi_i} = \Ad u$ for all $i \in \{1,2\}$.

Finally, statement (4) is just a rephrasing of the equality $\cZ(\core(M)) = \cZ(\core(M_1)) \cap \cZ(\core(B)) \cap \cZ(\core(M_2))$ proved in Theorem \ref{amalgamated}.
\end{proof}

We are now ready to prove Theorem \ref{thmD} and its Corollary \ref{corE}.

\begin{proof}[Proof of Theorem~\ref{thmD}]
By Theorem \ref{amalgamated}, $M$ is a nonamenable factor. Since the inclusion $B \subset M_i$ has no trivial corner, the inclusion $\core(B) \subset \core(M_i)$ has no trivial corner either. So $M$ is prime by Theorem \ref{prime}.
\end{proof}

\begin{proof}[Proof of Corollary~\ref{corE}]
In both cases, $\Gamma \actson (X,\mu)$ is free and ergodic and hence, $M = \rL^\infty(X) \rtimes \Gamma$ is a factor. It suffices to prove that in both cases, $M$ is a nonamenable prime factor, since then $\cR(\Gamma \actson X)$ must be indecomposable as well.

{\bf (1)} Put $M_i = \rL^\infty(X) \rtimes \Gamma_i$ and $B = \rL^\infty(X) \rtimes \Sigma$. We canonically have that $M = M_1 *_B M_2$.
Since $\Gamma_i \curvearrowright (X, \mu)$ is recurrent, $M_i = \rL^\infty(X) \rtimes \Gamma_i$ has no type \tI\ direct summand. Since $\Sigma$ is a finite group, $B = \rL^\infty(X) \rtimes \Sigma$ is a type \tI\ von Neumann algebra. In particular, $B \subset M_i$ has no trivial corner. Theorem \ref{thmD} then says that $M$ is a nonamenable prime factor.

{\bf (2)} We already observed that $M$ is a factor. To see that $M$ is nonamenable, define the subgroups $\Gamma_i < \Gamma$ given by $\Gamma_1 = H$ and $\Gamma_2 = t^{-1} H t$. Note that $\Gamma_1 \cap \Gamma_2 = \Sigma$ and that, in this way, $\Lambda := \Gamma_1 *_\Sigma \Gamma_2 < \Gamma$. Since the action $H \actson (X,\mu)$ is recurrent, also the actions $\Gamma_i \actson (X,\mu)$ are recurrent. Reasoning as in the proof of (1), it follows from Theorem \ref{amalgamated} that $\rL^\infty(X) \rtimes \Lambda$ has no amenable direct summand. Since there is a natural faithful normal conditional expectation $M \recht \rL^\infty(X) \rtimes \Lambda$, it follows that $M$ is nonamenable as well.

To prove that $M$ is prime, we use the HNN construction in the framework of von Neumann algebras and we refer to \cite{ueda-jfa,ueda-illinois,fima-vaes} for more details. Write $N = \rL^\infty(X) \rtimes \Sigma$, $P = \rL^\infty(X) \rtimes H$, $\core(N) = \rL^\infty(X \times \R) \rtimes \Sigma$ and $\core(P) = \rL^\infty(X \times \R) \rtimes H$.  Define the normal $\ast$-homomorphism $\theta : N \hookrightarrow P$, by $\theta(f u_g) = f \circ t^{-1} u_{\theta(g)}$ for all $f \in \rL^\infty(X)$. Moreover define the normal $\ast$-homomorphisms
$$N \oplus N \hookrightarrow \M_2(P), \mbox{ by } x \oplus y \mapsto \begin{pmatrix}
x & 0 \\
0 & \theta(y)
\end{pmatrix}$$
and
$$N \oplus N \hookrightarrow \M_2(N), \mbox{ by } x \oplus y \mapsto \begin{pmatrix}
x & 0 \\
0 & y
\end{pmatrix}.$$
Likewise, define the $\Tr$-preserving normal $\ast$-homomorphisms $\core(\theta) : \core(N) \hookrightarrow \core(P)$, $\core(N) \oplus \core(N) \hookrightarrow \M_2(\core(P))$ and $\core(N) \oplus \core(N) \hookrightarrow \M_2(\core(N))$. Denote by $(e_{ij})$ the canonical matrix units in $\M_2(\core(P))$. It follows from \cite[Proposition 3.1]{ueda-jfa}, that we may identify $M$ with the corner of the amalgamated free product
$$M = e_{11} \left( \M_2(P) \ast_{N \oplus N} \M_2(N) \right) e_{11} \; .$$
Observe that we cannot directly use Theorem~\ref{thmD} to get that $M$ is prime, as the inclusion $N \oplus N \subset \M_2(N)$ has a trivial corner. We already showed however that $M$ is a nonamenable factor. Therefore, also $Q := \M_2(P) *_{N \oplus N} \M_2(N)$ is a nonamenable factor. We will use Theorem \ref{prime} to prove that $Q$ is prime. Then also $M = e_{11} Q e_{11}$ follows prime.

Observe that
$$\core(Q) = \M_2(\core(P)) \ast_{\core(N) \oplus \core(N)} \M_2(\core(N)) \; .$$
If $Q$ would not be prime, Theorem \ref{prime} implies that we are in one of the following situations.
\begin{enumerate}
\item There exists a nonzero projection $p \in \cZ(\core(P))= \cZ(\M_2(\core(P)))$ such that $p \core(Q) p = \M_2(\core(P))p$.
\item There exists a nonzero projection $q \in \cZ(\core(N)) \oplus \cZ(\core(N))$ such that $q \M_2(\core(P)) q = (\core(N) \oplus \core(N))q$.
\end{enumerate}
Since $H \actson (X,\mu)$ is recurrent, we know that $P$ and $\core(P)$ have no type \tI\ direct summand. On the other hand, since $\Sigma$ is a finite group, $\core(N)$ is of type \tI. So, a projection $q$ as in (2) does not exist.

It remains to rule out the existence of a projection $p$ as in (1). Multiplying the equality $p \core(Q) p = \M_2(\core(P)) p$ on the left and on the right by $e_{11}$, it follows that $p \core(M) p = \core(P) p$. Write $\core(M) = \rL^\infty(X \times \R) \rtimes \Gamma$, where $\Gamma \actson X \times \R$ is the Maharam extension of $\Gamma \actson X$. In this picture, $\core(P)$ corresponds to $\rL^\infty(X \times \R) \rtimes H$. Since the action $H \actson X \times \R$ is essentially free, we have $\cZ(\core(P)) \subset \rL^\infty(X \times \R)$. So, $p = \one_\cU$ for a non-negligible $H$-invariant measurable subset $\cU \subset X \times \R$. Since $p \core(M) p = \core(P) p$, we have $pxp = 0$ whenever $x \in \core(M)$ and $E_{\core(P)}(x) =0$. We get in particular that $p u_g p = 0$ for all $g \in \Gamma - H$. This means that the sets $(g \cdot \cU)_{g \in \Gamma / H}$ are all disjoint, up to measure zero. Since the cosets $h t^{-1} H$, $h \in H - \Sigma$, are all different from $t^{-1} H$, it follows that with $\cV = t^{-1} \cdot \cU$, we have that $h \cdot \cV \cap \cV$ has measure zero for every $h \in H - \Sigma$. Removing a set of measure zero from $\cV$, we find a non-negligible subset $\cW \subset \cV$ such that $h \cdot \cW \cap \cW = \emptyset$ for all $h \in H - \Sigma$. Since the action $H \actson X$ is assumed to be recurrent and since $\Sigma$ is finite, we arrived at a contradiction.
\end{proof}


\begin{thebibliography}{CFW81}


\bibitem[Ad92]{adams} {\sc S. Adams}, {\it Indecomposability of equivalence relations generated by word hyperbolic groups.} Topology {\bf 33} (1994), 785--798.

\bibitem[AD93]{anan95} {\sc C. Anantharaman-Delaroche}, {\it Amenable correspondences and approximation properties for von Neumann algebras}. Pacific J. Math. {\bf 171} (1995), 309--341.

\bibitem[BO08]{BO08} {\sc N.P. Brown, N. Ozawa}, {\it C$^*$-algebras and finite-dimensional approximations.} Graduate Studies in Mathematics, {\bf 88}. American Mathematical Society, Providence, RI, 2008. xvi+509 pp.

\bibitem[CH08]{CH08} {\sc I. Chifan, C. Houdayer}, {\it Bass-Serre rigidity results in von Neumann algebras.} Duke Math. J. {\bf 153} (2010), 23--54.

\bibitem[CS11]{CS11} {\sc I. Chifan, T. Sinclair}, {\it On the structural theory of ${\rm II_1}$ factors of negatively curved groups.} {\tt arXiv:1103.4299}

\bibitem[CSU11]{CSU11} {\sc I. Chifan, T. Sinclair, B. Udrea}, {\it On the structural theory of ${\rm II_1}$ factors of negatively curved groups, {\rm II}. Actions by product groups.} {\tt arXiv:1108.4200}

\bibitem[Co72]{connes73} {\sc A. Connes}, {\it Une classification des facteurs de type {\rm III}.} Ann. Sci. \'{E}cole Norm. Sup. {\bf 6} (1973), 133--252.

\bibitem[Co75]{connes76} {\sc A. Connes}, {\it Classification of injective factors.} Ann. of Math. {\bf 104} (1976), 73--115.

\bibitem[CFW81]{connes-feldman-weiss} {\sc A. Connes, J. Feldman, B. Weiss}, {\it An amenable equivalence relation is generated by a single transformation.} Ergodic Theory Dynam. Systems {\bf 1} (1981), 431--450.

\bibitem[CJ81]{CJ81} {\sc A. Connes and V.F.R. Jones}, {\it A II$_1$ factor with two non-conjugate Cartan subalgebras.} Bull. Amer. Math. Soc. {\bf 6} (1982), 211-212.


\bibitem[CT76]{connestak} {\sc A. Connes, M. Takesaki},
{\it The flow of weights on factors of type {\rm III}.} T\^ohoku Math. J. {\bf 29} (1977), 473--575.

\bibitem[CH88]{cowling-haagerup} {\sc M. Cowling, U. Haagerup}, {\it Completely bounded multipliers of the Fourier algebra of a simple Lie group of real rank one.} Invent. Math. {\bf 96} (1989), 507--549.

\bibitem[FT99]{falcone} {\sc A.J. Falcone, M. Takesaki}, {\it Non-commutative flow of weights on a von Neumann algebra.} J. Funct. Anal. {\bf 182} (2001), 170--206.

\bibitem[FM75]{feldman-moore} {\sc J. Feldman, C.C. Moore}, {\it Ergodic equivalence relations, cohomology, and von Neumann algebras. ${\rm I}$ and ${\rm II}$.} Trans. Amer. Math. Soc. {\bf 234} (1977), 289--324, 325--359.

\bibitem[FV10]{fima-vaes} {\sc P. Fima, S. Vaes}, {\it HNN extensions and unique group measure space decomposition of ${\rm II_1}$ factors}. Trans. Amer. Math. Soc. {\bf 364} (2012), 2601--2617.

\bibitem[Ge96]{Ge96} {\sc L. Ge}, {\it Applications of free entropy to finite von Neumann algebras, II.} Ann. of Math. {\bf 147} (1998), 143-157.


\bibitem[Ha83]{haagerup} {\sc U. Haagerup}, {\it Injectivity and decomposition of completely bounded maps.} Operator algebras and their connections with topology and ergodic theory (Bu\c{s}teni, 1983), 170--222, Lecture Notes in Math. {\bf 1132}, Springer, Berlin, 1985.

\bibitem[HS90]{haagerup-stormer} {\sc U. Haagerup, E. St\o rmer}, {\it Equivalence of normal states on von Neumann algebras and the flow of weights.} Adv. Math. {\bf 83} (1990), 180--262.

\bibitem[HR10]{houdayer-ricard} {\sc C Houdayer, \'E. Ricard}, {\it Approximation properties and absence of Cartan subalgebra for free Araki-Woods factors.} Adv. Math. {\bf 228} (2011), 764--802.


\bibitem[IPP05]{IPP05} {\sc A. Ioana, J. Peterson, S. Popa}, {\it Amalgamated free products of $w$-rigid factors and calculation of their symmetry groups.} Acta Math. {\bf 200} (2008), 85--153.

\bibitem[Is12]{isono} {\sc Y. Isono}, {\it Weak exactness for C$^*$-algebras and applications to condition (AO).} Preprint 2012.

\bibitem[Jo82]{jones-index} {\sc V.F.R. Jones}, {\it Index for subfactors.} Invent. Math. {\bf 72} (1983), 1--25.

\bibitem[Ma63]{maharam} {\sc D. Maharam}, {\it Incompressible transformations.} Fund. Math. {\bf 56} (1964), 35--50.


\bibitem[Oz03]{Oz03} {\sc N. Ozawa}, {\it Solid von Neumann algebras.} Acta Math. {\bf 192} (2004), 111--117.

\bibitem[Oz04]{ozawa-kurosh} {\sc N. Ozawa}, {\it A Kurosh-type theorem for type ${\rm II_1}$ factors.} Int. Math. Res. Not. {\bf 2006}, Art. ID 97560, 21 pp.

\bibitem[Oz07]{Oz07} {\sc N. Ozawa}, {\it Weak amenability of hyperbolic groups.} Groups Geom. Dyn. {\bf 2} (2008), 271-280.

\bibitem[Oz10]{Oz10} {\sc N. Ozawa}, {\it Examples of groups which are not weakly amenable.} Kyoto J. Math., to appear. {\tt arXiv:1012.0613}

\bibitem[OP07]{OP07} {\sc N. Ozawa, S. Popa}, {\it On a class of $\rm{II}_1$ factors with at most one Cartan subalgebra.} Ann. of Math. {\bf 172} (2010), 713--749.

\bibitem[OP08]{OP08} {\sc N. Ozawa, S. Popa}, {\it On a class of $\rm{II}_1$ factors with at most one Cartan subalgebra ${\rm II}$.} Amer. J. Math. {\bf 132} (2010), 841--866.

\bibitem[Pe06]{Pe06} {\sc J. Peterson}, {\it $L^2$-rigidity in von Neumann algebras.} Invent. Math. {\bf 175} (2009), 417--433.


\bibitem[PP84]{pimsner-popa} {\sc M. Pimsner, S. Popa}, {\it Entropy and index for subfactors.} Ann. Sci. \'Ecole Norm. Sup. {\bf 19} (1986), 57--106.


\bibitem[Po01]{Po01} {\sc S. Popa}, {\it On a class of type ${\rm II_1}$ factors with Betti numbers invariants.} Ann. of Math. {\bf 163} (2006), 809--899.

\bibitem[Po03]{popa-malleable1} {\sc S. Popa}, {\it Strong rigidity of ${\rm II_1}$ factors arising from malleable actions of w-rigid groups ${\rm I}$.} Invent. Math. {\bf 165} (2006), 369-408.

\bibitem[Po06a]{Po06a} {\sc S. Popa}, {\it On the superrigidity of malleable actions with spectral gap.} J. Amer. Math. Soc. {\bf 21} (2008), 981-1000.

\bibitem[Po06b]{Po06b} {\sc S. Popa}, {\it On Ozawa's Property for Free Group Factors.} Int. Math. Res. Not. {\bf 2007}, Article ID rnm036.

\bibitem[Po06c]{Po06c} {\sc S. Popa}, {\it Deformation and rigidity for group actions and von Neumann algebras.} Proceedings of the International Congress of Mathematicians (Madrid, 2006), Vol.\ I, European Mathematical Society Publishing House, 2007, p.\ 445-477.

\bibitem[PV11]{PV11} {\sc S. Popa, S. Vaes}, {\it Unique Cartan decomposition for ${\rm II_1}$ factors arising from arbitrary actions of free groups.} {\tt arXiv:1111.6951}

\bibitem[PV12]{PV12} {\sc S. Popa, S. Vaes}, {\it Unique Cartan decomposition for ${\rm II_1}$ factors arising from arbitrary actions of hyperbolic groups.} {\tt arXiv:1201.2824}

\bibitem[Ro47]{Ro47} {\sc V.A. Rohlin}, {\it On the fundamental ideas of measure theory.} Amer. Math. Soc. Translation {\bf 71} (1952), 55 pp.


\bibitem[Si10]{sinclair-jfa} {\sc T. Sinclair}, {\it Strong solidity of group factors from lattices in $\SO(n,1)$ and $\SU(n,1)$.} J. Funct. Anal. {\bf 260} (2011), 3209--3221.



\bibitem[Ta03]{takesakiII} {\sc M. Takesaki}, {\it Theory of operator algebras. ${\rm II}$.}
Encyclopaedia of Mathematical Sciences, {\bf 125}. Operator Algebras and Non-commutative Geometry, 6. Springer-Verlag, Berlin, 2003. xxii+518 pp.

\bibitem[Ue98]{ueda-pacific} {\sc Y. Ueda}, {\it Amalgamated free products over Cartan subalgebra.} Pacific J. Math. {\bf 191} (1999), 359--392.

\bibitem[Ue00]{ueda-transactions} {\sc Y. Ueda}, {\it Fullness, Connes' $\chi$-groups, and ultra-products of amalgamated free products over Cartan subalgebras.} Trans. Amer. Math. Soc. {\bf 355} (2003), 349--371.

\bibitem[Ue02]{ueda-cartan-II} {\sc Y. Ueda}, {\it Amalgamated free products over Cartan subalgebra, ${\rm II}$. Supplementary results and examples.} Advanced Studies in Pure Mathematics, Operator Algebras and Applications. {\bf 38} (2004),  239--265.

\bibitem[Ue04]{ueda-jfa} {\sc Y. Ueda}, {\it HNN extensions of von Neumann algebras.}
J. Funct. Anal. {\bf 225} (2005), 383--426.

\bibitem[Ue07]{ueda-illinois} {\sc Y. Ueda}, {\it Remarks on HNN extensions in operator algebras.} Illinois J. Math. {\bf 52} (2008), 705--725.

\bibitem[Ue12]{ueda12} {\sc Y. Ueda}, {\it Some analysis on amalgamated free products of von Neumann algebras in non-tracial setting.} Preprint.

\bibitem[Va06]{vaes-bourbaki-popa} {\sc S. Vaes}, {\it Rigidity results for Bernoulli actions and their von Neumann algebras (after S. Popa).} S\'{e}minaire Bourbaki, expos\'e 961. Ast\'erisque {\bf 311} (2007), 237-294.

\bibitem[Va10a]{Va10a} {\sc S. Vaes}, {\it Rigidity for von Neumann algebras and their invariants.} Proceedings of the International Congress of Mathematicians (Hyderabad, 2010), Vol.\ III, Hindustan Book Agency, 2010, p.\ 1624-1650.

\bibitem[Va10b]{vaes-cohomology} {\sc S. Vaes}, {\it One-cohomology and the uniqueness of the group measure space decomposition of a ${\rm II_1}$ factor.} Math. Ann., to appear. {\tt arXiv:1012.5077}


\bibitem[Vo85]{voiculescu85} {\sc D.-V. Voiculescu}, {\it Symmetries of some reduced free product $C^*$-algebras.} Operator algebras and Their Connections with Topology and Ergodic Theory, Lecture Notes in Mathematics {\bf 1132}. Springer-Verlag, (1985), 556--588.

\bibitem[Vo92]{voiculescu92} {\sc D.-V. Voiculescu, K.J. Dykema, A. Nica}, {\it Free random variables.} CRM Monograph Series {\bf 1}. American Mathematical Society, Providence, RI, 1992.

\bibitem[Zi76]{Zi76} {\sc R.J. Zimmer}, {\it Hyperfinite factors and amenable ergodic actions.} Invent. Math. {\bf 41} (1977), 23-31.

\end{thebibliography}
\end{document}